\documentclass[graybox,envcountsect,envcountsame]{svmult}


\usepackage{type1cm}        
%
\usepackage{makeidx}         
\usepackage{graphicx}        
\usepackage{multicol}        
\usepackage[bottom]{footmisc}

\usepackage{newtxtext}       %
\usepackage{newtxmath}       
 
  \usepackage{amsmath}
 \usepackage{amssymb}
\usepackage[curve]{xypic}
\usepackage{tikz}
\usetikzlibrary{arrows,decorations,shapes,shadows}

\newcommand{\lcm}{\operatorname{lcm}}
\newcommand{\red}{\operatorname{red}}
\newcommand{\tot}{\operatorname{tot}}
\newcommand{\specan}{\operatorname{specan}}
\newcommand{\supp}{\operatorname{supp}}
\newcommand{\bl}{\operatorname{Bl}}
\newcommand{\proj}{\operatorname{Proj}}

\makeindex             


\begin{document}

\title*{Lipschitz fractions of a complex analytic algebra and Zariski saturation }
\author{Fr\'ed\'eric Pham and Bernard Teissier}

%
%
\maketitle
\begin{abstract}\noindent
This text is the English translation, due to Naoufal Bouchareb, of an unpublished manuscript of 1969 (the French version is available on HAL as hal-00384928) inspired by Zariski's theory of saturation. Its publication is justified by the fact that it appears now as a precursor of the recently developed study of the Lipschitz geometry (for the outer metric) of germs of complex analytic spaces. This version contains some new footnotes and an additional bibliography.
\end{abstract}
\section*{Introduction}

While seeking to define a good notion of equisingularity (see \cite{Zar65-a}, \cite{Zar65-b}) , Zariski was led to define in \cite{Zar68} what he calls the {\it saturation} of a local ring:\index{saturation} the saturated ring $\widetilde A$ of a ring $A$ contains $A$ and is contained in its normalization $\overline{A}$, and for a complete integral ring of dimension $1$, the datum of the saturated ring is equivalent to the datum of the set of  Puiseux characteristic exponents of the corresponding algebroid curve.

In the case of  {complex analytic algebras}, it is well known that the normalization $\overline{A}$ coincides with the set of germs of meromorphic functions with bounded module; among the intermediate algebras between $A$ and $\overline{A}$, there is one which  can be introduced quite naturally: it is the algebra of the germs of Lipschitz meromorphic functions. We propose to study this algebra, first formally (Section \ref{section1}), then geometrically (Section \ref{section2}), and to prove (Sections \ref{section3} and \ref{section5}) that at least in the case of hypersurfaces, it {coincides with the Zariski saturation}. In Section \ref{section4},  in the case of a reduced but not necessarily irreducible curve, we show how the constructions of Sections \ref{section1} and  \ref{section2} provide {a sequence of rational exponents} (defined intrinsically, without reference to any coordinates system), which  {generalizes the}  {sequence of  characteristic  Puiseux  exponents \index{Puiseux exponent!characteristic} of an irreducible curve}.
Finally, in Section \ref{section6}, we recover in a  very simple way  the result of Zariski which states that the  {equisaturation} of a family of hypersurfaces implies their  {topological equisingularity} (we even obtain the  {Lipschitz equisingularity}, realized by a Lipschitz deformation of the ambient space).

All the arguments are based on the techniques of  {normalized blow-ups} \index{blow-up!normalized} (recalled in the Preliminary Section; see also \cite{Hir64-b}), and we thank Professor H. Hironaka who taught it to us.\footnote{We are also very grateful to Mr. Naoufal Bouchareb who brilliantly and expertly translated our 1969 manuscript from French to English and from typewriting to LaTeX.} 

\section*{Preliminaries}

\noindent{\bf  (Reminders on the techniques of normalized blow-ups and majorations of analytic functions)}

\subsection*{Conventions}
In what  follows the rings are commutative, unitary and noetherian. A ring $A$ is said to be {\it normal} \index{normal!ring} if it is integrally closed in its total ring  of fractions $\tot(A)$. An analytic space $(X,\mathcal{O}_X)$ is said to be  {\it normal} if at every point $ x\in X $, $\mathcal{O}_{X,x}$ is normal. \index{normal!analytic space} We will denote by $\overline{A}$ the integral closure of a  ring  $A$ in $\tot(A)$.

\subsection{Universal property of the normalisation}
Let $n : \overline{X} \rightarrow X$ be the {\it normalisation} \index{normalization!universal property} of an analytic space $\overline{X}$, i.e.,  
 $\overline{X}=\specan_X\overline{\mathcal{O}_X}$, where $\overline{\mathcal{O}_X}$ is a finite $\mathcal{O}_X$-algebra satisfying  $(\overline{\mathcal{O}_X})_x = \overline{\mathcal{O}_{X,x}}$.
\begin{definition} 
For every normal analytic space $Y\overset{f}{\longrightarrow}X$ above $X$ such that the $f$-image of any irreducible component of $Y$ is not contained in $N=\supp \ \overline{\mathcal{O}_X} / \mathcal{O}_X $ (the analytic subspace of points in $X$ where $\mathcal{O}_{X,x}$ is not normal), there exists a unique factorization:
  
  $$ \xymatrix{
    Y \ar[rr]^{\overline{f}} \ar[rrd]_f && \overline{X} \ar[d]^n \\
     && X
  }
  $$
\end{definition}

\begin{proof}

\begin{description}
    \item[(a)] Algebraic version:
\\
 Let  $\varphi : A  \rightarrow  B $ be a homomorphism of rings. Let $(\mathfrak{p}_i)_{i=1,\ldots,k}$ be the prime ideals of $0$ in $B$.
\\
We suppose that:
\begin{enumerate}
    \item[(i)] $B$ is normal;
    \item[(ii)]  for  every $i=1,\dots,k$, $C_{\overline{A}}(A)$ is not included in $   \varphi^{-1}(\mathfrak{p}_i)$,  where $ C_{\overline{A}} (A)$ denotes the conductor of $A$ in $\overline{A}$:
   \[
       C_{\overline{A}}(A) =  \{ g \in  A/ g\overline{A} \subset A \}.
   \]
\end{enumerate}
\noindent
Then, there is a unique factorization:
 
$$ \xymatrix{&\overline{A} \ar[rd]^{\overline{\varphi}} & \\ A    \ar[ru]  \ar[rr]^\varphi && B    } $$

Indeed, by the Prime Avoidance Lemma (see \cite[\S 1]{Bou61}), there exists $g\in C_{\overline{A}}(A)$ such that $g \notin \varphi^{-1}(\mathfrak{p}_i)$ for all $i=1,\dots,k $. This implies  that $\varphi(g)$ is not a divisor of $0$ in $B$. For every  $h \in \overline{A}$, set:
$$
\overline{\varphi}(h)=\frac{\varphi(g.h)}{\varphi(g)} \in \tot (B)
$$
Since $h$ is integral on $A$, $\overline{\varphi}(h)$ is integral on $\varphi(A)$, and thus also on $B$. Hence, $\overline{\varphi}(h) \in B$ and $\overline{\varphi}$ is the desired factorization. The uniqueness is obvious.
\\
\item[(b)] Geometric version:

Let  ${Y} \stackrel{{f}}{\longrightarrow} {X}$  satisfy the conditions of the statement. The conditions of the statement remain true locally at $y \in {Y}$ since if $\varphi^{-1}(N)$ contains locally an irreducible component of $Y$, it contains it globally. We deduce from this  that the local homomorphism:
\
\begin{center}
    ${\mathcal{O}}_{X, f(y)} \longrightarrow \mathcal{O}_{Y, y}$
\end{center}
satisfies the conditions of the algebraic version.
We then have the unique factorization:
$$\xymatrix {
  &  \overline{\mathcal{O}_{X, f(y)}} \ar[rd]& \\ \mathcal{O}_{X, f(y)} \ar[rr] \ar[ru] && \mathcal{O}_{Y,y}} $$
and by the coherence of $\overline{\mathcal{O}}_X$, the existence and uniqueness of the   searched morphism.
 \end{description}
\end{proof}
\subsection{Universal property of the blowing-up (see \cite{Hir64-a})}  \label{subsec:Universal property of the blow-up} \index{blow-up!universal property}

\begin{proposition}  Let  $Y \hookrightarrow X$ be two analytic spaces and let $\cal I$ be the ideal of $Y$ in $X$. There exists a unique analytic space $ Z \stackrel\pi{\longrightarrow} X$ over $X$ such that:

    \begin{enumerate}
        \item[i)] $\pi^{-1}(Y)$ is a divisor of $Z$, i.e., ${\cal I}. \mathcal{O}_Z$ is invertible.
        \item[ii)] for every morphism $T\stackrel{\varphi}{\longrightarrow} X$ such that ${\cal I}.\mathcal{O}_T$ is invertible, there is a unique factorization:
$$ \xymatrix {
    T \ar[rr]^{\bl(\varphi)} \ar[drr]^{\varphi} && Z \ar[d]^\pi \\
   && X }$$
    \end{enumerate}
The morphism $Z\stackrel{\pi}{\rightarrow} X$  is called the {\it blow-up} of $X$ \index{blow-up} along $Y$. Recall that $\pi$ is bimeromorphic, proper and surjective and that $\pi\vert Z\setminus \pi^{-1}(Y)$ is an isomorphism on $X \setminus Y$.
 \end{proposition}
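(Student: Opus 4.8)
The plan is to construct the blow-up $Z$ locally by the Proj construction, patch the local pieces, and then verify the universal property. The uniqueness part is the easy half and I would dispose of it first: if $Z_1\to X$ and $Z_2\to X$ both satisfy (i) and (ii), then applying (ii) for $Z_1$ to the morphism $Z_2\to X$ (whose ${\cal I}$-pullback is invertible by (i) for $Z_2$) and vice versa produces mutually inverse $X$-morphisms, so $Z_1\cong Z_2$ canonically. This argument is formal and uses nothing but the stated properties.

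For existence, I would work on a small enough open $U\subset X$ on which ${\cal I}$ is generated by finitely many sections $f_0,\dots,f_r\in\Oo_X(U)$, form the Rees algebra $\bigoplus_{d\ge 0}{\cal I}^d$, and set $Z_U=\proj_U\bigl(\bigoplus_d {\cal I}^d\bigr)$, the analytic space obtained by gluing the affine-like charts where one generator $f_i$ divides all the others. One checks on these charts that ${\cal I}\cdot\Oo_{Z_U}$ is the invertible sheaf $\Oo_{Z_U}(1)$, giving (i). These local constructions glue canonically because over an overlap $U\cap U'$ the two Proj's both satisfy the universal property for the restricted ideal — so already the local uniqueness argument furnishes the glueing data with the cocycle condition — yielding a global $Z\stackrel{\pi}{\to}X$ with ${\cal I}\cdot\Oo_Z$ invertible.

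To prove the universal property (ii), let $\varphi\colon T\to X$ be such that ${\cal I}\cdot\Oo_T$ is invertible, say equal to an invertible sheaf $\mathcal{L}$ locally generated by the image of one $f_{i}$ which then divides all images $\varphi^\#(f_j)$. Working locally on $T$ one defines $\bl(\varphi)\colon T\to Z$ by sending $T$ into the chart of $Z$ where $f_i$ divides the rest, via the ratios $\varphi^\#(f_j)/\varphi^\#(f_i)\in\Oo_T$; these are well defined because $f_i$ generates $\mathcal{L}$ there, and changing the chart ($f_i$ vs. $f_k$) multiplies the coordinates by a unit, so the local maps glue. One checks $\pi\circ\bl(\varphi)=\varphi$ directly from the definitions, and uniqueness of $\bl(\varphi)$ follows because any $X$-morphism $T\to Z$ must land in the chart where the pulled-back generator is invertible and is then forced to be given by exactly those ratios. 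The remaining assertions — $\pi$ proper, surjective, bimeromorphic, and an isomorphism over $X\setminus Y$ — come from the construction: properness from that of $\proj$ over the base, surjectivity and the isomorphism statement from the fact that off $Y$ the ideal ${\cal I}$ is already invertible (indeed locally trivial), so the universal property identifies $\pi^{-1}(X\setminus Y)$ with $X\setminus Y$, whence also bimeromorphy.

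The main obstacle is analytic rather than algebraic: one must make sure the $\proj$ construction and the glueing take place in the category of analytic spaces, i.e.\ that the charts $\{f_i \text{ divides } f_j\}$ carry a natural analytic structure (they are analytic subspaces of $U\times\mathbf P^r$ cut out by the $2\times 2$ minors $f_i\xi_j-f_j\xi_i$, up to taking the reduced-free part dictated by coherence) and that coherence of ${\cal I}$ propagates to coherence of the Rees algebra in each degree so that $\proj$ is a genuine (finite-type) analytic space over $X$. Granting the standard fact that these relative $\proj$'s exist and are proper over the base — which is exactly what \cite{Hir64-a} provides — everything else is the formal manipulation of universal properties sketched above.
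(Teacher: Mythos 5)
The paper does not give a proof of this proposition: it is stated as a recalled fact under the header ``Universal property of the blowing-up (see \cite{Hir64-a})'', with the existence, uniqueness, properness, surjectivity, bimeromorphy, and isomorphism-off-$Y$ all delegated to Hironaka's reference. So there is no argument in the paper to compare yours against.

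That said, your sketch is the standard one and is essentially correct: uniqueness by the usual Yoneda-style argument from the universal property; existence via $\proj$ of the Rees algebra, with charts $\{f_i \mid f_j\}$ cut out in $U\times\mathbf P^r$ by the minors $f_i\xi_j - f_j\xi_i$; the universal property verified chart-by-chart by the ratios $\varphi^{\#}(f_j)/\varphi^{\#}(f_i)$; and the remaining qualitative statements read off from the construction. Two small points worth being explicit about if you were to write this out in full. First, for the universal property one needs that locally on $T$ some single $\varphi^{\#}(f_i)$ generates the invertible ideal ${\cal I}\cdot\Oo_T$ — this follows from Nakayama since the images of the $f_i$ generate it and it is locally principal, and your phrasing assumes this without saying it. Second, surjectivity (and bimeromorphy) require that ${\cal I}$ is not nilpotent on any irreducible component of $X$, i.e., $Y$ contains no component of $X$; otherwise the blow-up is empty over such a component. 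The paper also leaves this hypothesis implicit, so you are in good company, but it is a genuine hypothesis. Beyond these, the analytic coherence issues you flag at the end are exactly what \cite{Hir64-a} supplies, which is why the paper simply cites it rather than proving anything.
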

 
\subsection{Universal property of the normalized blow-up}  \label{subsec:P3}  \index{blow-up!normalized} 

\begin{proposition} 
 Let $Y  \hookrightarrow  X$ such that $X$ is normal outside of $Y$. Then, for every morphism $T\stackrel\varphi{\longrightarrow} X$ such that:
\begin{enumerate}
    \item[i)] $T$ is normal;
    \item[ii)] ${\cal I}.{\cal O}_T$ is invertible,
\end{enumerate}
 there exists a unique factorization.
 $$\xymatrix {
    T \ar[rr]^{\overline{\bl(\varphi)}} \ar[drr]^{\varphi} && \overline{Z} \ar[d]^{n\circ \pi} \\
   && X }$$
  \end{proposition}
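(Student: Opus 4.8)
The plan is to produce the factorization in two steps: first through the blow-up $\pi\colon Z\to X$, using its universal property (preceding subsection), and then through the normalization $n\colon\overline Z\to Z$, using the universal property of the normalisation established at the beginning of this section. Since $\mathcal I\cdot\mathcal O_T$ is invertible by hypothesis (ii), the universal property of the blow-up provides a unique morphism $\bl(\varphi)\colon T\to Z$ with $\pi\circ\bl(\varphi)=\varphi$. It then remains to lift $\bl(\varphi)$ along $n$, and for that I would apply the universal property of the normalisation to the morphism $\bl(\varphi)\colon T\to Z$; this will yield a unique $\overline{\bl(\varphi)}\colon T\to\overline Z$ with $n\circ\overline{\bl(\varphi)}=\bl(\varphi)$, and hence $(n\circ\pi)\circ\overline{\bl(\varphi)}=\varphi$, which is the asserted factorization. (That $\overline Z$ is defined is presupposed by the notation $n\circ\pi$ in the statement; in any case $Z$ is reduced when $X$ is, $\pi$ being bimeromorphic.)

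The two hypotheses of the universal property of the normalisation must be checked for $\bl(\varphi)\colon T\to Z$. The first — that the source be normal — is exactly hypothesis (i). The second requires that the $\bl(\varphi)$-image of each irreducible component $W$ of $T$ not be contained in the non-normal locus $N=\supp(\overline{\mathcal O_Z}/\mathcal O_Z)$ of $Z$; this is the one genuine point of the argument, and I would handle it in two moves. First, since $X$ is normal outside $Y$ and $\pi$ restricts to an isomorphism of $Z\setminus\pi^{-1}(Y)$ onto $X\setminus Y$, the space $Z$ is normal outside $\pi^{-1}(Y)$, so $N\subseteq\pi^{-1}(Y)$; hence it suffices to check $\bl(\varphi)(W)\not\subseteq\pi^{-1}(Y)$, i.e.\ (applying $\pi$ and using $\pi\circ\bl(\varphi)=\varphi$) that $\varphi(W)\not\subseteq Y$. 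Second, $\varphi^{-1}(Y)$ is the analytic subspace of $T$ cut out by $\mathcal I\cdot\mathcal O_T$, which is invertible by (ii); an invertible ideal sheaf is locally generated by a single non-zero-divisor, so $\varphi^{-1}(Y)$ is a divisor of $T$ and therefore contains no irreducible component of $T$. Thus $W\not\subseteq\varphi^{-1}(Y)$, i.e.\ $\varphi(W)\not\subseteq Y$, as required. The step I expect to be the main (if modest) obstacle is precisely this second move: one must use that invertibility forces the local equation of $\varphi^{-1}(Y)$ to be a non-zero-divisor, so that no component of $T$ can be absorbed into $\varphi^{-1}(Y)$ — and without condition (ii) the assertion fails.

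It remains to address uniqueness. If $\psi_1,\psi_2\colon T\to\overline Z$ both satisfy $(n\circ\pi)\circ\psi_i=\varphi$, then $n\circ\psi_1$ and $n\circ\psi_2$ are morphisms $T\to Z$ with $\pi\circ(n\circ\psi_i)=\varphi$ and $\mathcal I\cdot\mathcal O_T$ invertible, hence equal by the uniqueness in the universal property of the blow-up; and then $\psi_1=\psi_2$ by the uniqueness in the universal property of the normalisation applied to the common morphism $n\circ\psi_1=n\circ\psi_2\colon T\to Z$. The whole argument is thus formal once the containment $N\subseteq\pi^{-1}(Y)$ and the divisoriality of $\varphi^{-1}(Y)$ are in place.
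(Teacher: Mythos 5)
Your argument is correct and follows essentially the same route as the paper's: first factor through $Z$ via the universal property of the blow-up, then lift along $n$ via the universal property of the normalisation, checking that $Z$ is normal off $\pi^{-1}(Y)$ and that $\bl(\varphi)^{-1}(\pi^{-1}(Y))$ (a divisor in the normal space $T$) cannot contain a component of $T$. The only cosmetic differences are that you spell out the uniqueness step and justify the divisoriality directly via non-zero-divisors, whereas the paper compresses this into ``Since $T$ is normal, this divisor cannot contain any irreducible component.''
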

 
\begin{proof}
It is sufficient to check that the factorization $T\stackrel{\bl(\varphi)}{\longrightarrow} {Z}$ satisfies the conditions of Subsection  \ref{subsec:Universal property of the blow-up}. Since $\pi\vert
Z \setminus \pi^{-1}(Y)$ is an isomorphism, $Z \setminus \pi^{-1}(Y)$ is normal and it is sufficient to verify that the image of each irreducible component of $T$ meets $Z \setminus\pi^{-1}(Y)$. But the inverse image of $\pi^{-1}(Y)$ by $\bl(\varphi)$ is a divisor by assumption. Since $T$ is normal, this divisor cannot  contain any irreducible component.
\end{proof}

\subsection{Normalized blow-up and integral closure of an ideal} \label{subsec:P4}
\noindent (See also \cite[Chap. II]{Lip69}.)
 \vskip0,3cm
Let $A$ be the analytic algebra of an analytic space germ $(X,0)$, let  $I$ be an ideal of $A$ and let  $Y  \hookrightarrow  X$ be the corresponding sub-germ. It is known that the blow-up of the germ $Y$ in the germ $X$\footnote{Here, as in other places, we abuse language to identify the germ $(X,0)$ with one of its representatives.} is the projective object $Z=\proj_{A}E$ over $X$ associated with the graded algebra $E=\underset{n \geqslant 0}{\oplus}{I}^n$.
The normalization of $Z$ can be written $\overline{Z}=\proj_{A}\overline{E}$, with ${\overline{E}}=\underset{n \geqslant 0}{\oplus} \overline{I^n}$
(where, for an ideal  $J$ of $A$, we  define:
$$
\overline{J}=\left\{h \in {\tot}(A) \ \  | \ \ \exists j_{1} \in J, {j}_{2} \in {J}^{2}, \ldots, {j}_{{k}} \in {J}^{{k}} \colon \ \ {h}^{{k}}+{j}_{1} {h}^{{k}-1}+\cdots+{j}_{{k}}=0\right\}, 
$$
which is the ideal of $\overline{A}$ called the  {\it integral closure of the ideal $J$ in $\overline{A}$}). \index{integral closure!of a ring}

As an object over $\overline{X}$, the space $\overline{Z}$ equals $\proj_{\overline{A}}\overline{E}$. But since $\overline{E}$ is a  graded $\overline{A}$-algebra of finite type, there exists a positive integer $s$ such that the graded algebra $${\overline{E}^{(s)}}=\underset{n \geqslant 0}{\oplus} {\overline{I^{n.s}}}$$ is generated   by its degree $1$ elements:
${\overline{E}_{1}^{(s)}}={\overline{I^{s}}}$. 
But then, $\overline{E}_{n}^{(s)}=(\overline{I^{s}})^n$, and as we know that there is a canonical isomorphism $\overline{Z}=\proj_{\overline{A}}\overline{E}^{(s)}$, we see that the normalized blow-up $\overline{Z}$ of $I$ in $A$, with  its canonical morphism to $\overline{X}$, coincides with 
  the blow-up of $\overline{I^{s}}$ in $\overline{A}$.
 
\begin{proposition} \label{prop:preliminaries i}
$I$ and $\overline{I}$ generate the same ideal of $\mathcal{O}_{\overline{Z}}$, i.e., $I\mathcal{O}_{\overline{Z}}=\overline{I}\mathcal{O}_{\overline{Z}}$.
  \end{proposition}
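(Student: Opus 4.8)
The plan is to prove the two inclusions $I\mathcal{O}_{\overline{Z}}\subseteq\overline{I}\mathcal{O}_{\overline{Z}}$ and $\overline{I}\mathcal{O}_{\overline{Z}}\subseteq I\mathcal{O}_{\overline{Z}}$ separately. The first is immediate from $I\subseteq\overline{I}$, so the content is in the second, which I would establish stalk by stalk, exploiting that $\overline{Z}$ is normal and that $I\mathcal{O}_{\overline{Z}}$ is an invertible sheaf of ideals.

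Fix $z\in\overline{Z}$ and put $R=\mathcal{O}_{\overline{Z},z}$; since $\overline{Z}$ is normal, $R$ is a normal local ring, hence a normal domain and $\tot(R)$ is its fraction field. By the defining property i) of the blow-up $Z$, the ideal ${\cal I}\mathcal{O}_Z$ is invertible, and this is inherited by $\mathcal{O}_{\overline{Z}}$ through the finite normalization morphism $\overline{Z}\to Z$; thus $IR=(t)$ for a non-zero-divisor $t\in R$, and then $I^{\ell}R=(IR)^{\ell}=(t^{\ell})$ for every $\ell\geqslant 1$. Now take $h\in\overline{I}$. Since $\overline{I}\subseteq\overline{A}$ and $\overline{Z}$ is a space over $\overline{X}=\specan_X\overline{\mathcal{O}_X}$, both $h$ and an integral dependence relation
$$h^{k}+j_{1}h^{k-1}+\cdots+j_{k}=0,\qquad j_{\ell}\in I^{\ell},$$
have images in $R$. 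Writing $j_{\ell}=t^{\ell}a_{\ell}$ with $a_{\ell}\in R$ and dividing by $t^{k}$ in $\tot(R)$ (legitimate because $t$ is a non-zero-divisor) yields
$$\left(\frac{h}{t}\right)^{k}+a_{1}\left(\frac{h}{t}\right)^{k-1}+\cdots+a_{k}=0,$$
so $h/t$ is integral over $R$; by normality of $R$ we get $h/t\in R$, i.e.\ $h\in tR=IR$. Letting $z$ and $h$ vary gives $\overline{I}\mathcal{O}_{\overline{Z}}\subseteq I\mathcal{O}_{\overline{Z}}$, and the proposition follows.

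The delicate point, as opposed to the formal chain above, is precisely the assertion that $I\mathcal{O}_{\overline{Z}}$ is an invertible sheaf of ideals admitting a non-zero-divisor as local generator. This amounts to checking that the invertible ideal ${\cal I}\mathcal{O}_Z$ pulls back, along the finite normalization $\overline{Z}\to Z$, to a sheaf of ideals that is still invertible --- equivalently, that a local non-zero-divisor generator of ${\cal I}\mathcal{O}_Z$ remains a non-zero-divisor in each stalk of $\mathcal{O}_{\overline{Z}}$. This holds because those stalks are domains and the normalized blow-up morphism is bimeromorphic, so $I$ does not vanish identically on any irreducible component of $\overline{Z}$. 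Once this is granted, the remainder is the routine combination of integral dependence with the integral closedness of the normal local rings $R$.
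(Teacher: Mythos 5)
Your proof is correct, but it takes a genuinely different route from the paper's. The paper argues globally with the graded algebras $E=\oplus I^{n}$ and $\overline{E}=\oplus\overline{I^{n}}$: finiteness of $\overline{E}$ over $E$ gives an Artin--Rees-type identity $I\cdot\overline{I^{N}}=\overline{I^{N+1}}$ for $N$ large, which combined with $\overline{I}\cdot\overline{I^{N}}\subset\overline{I^{N+1}}$ yields
$\overline{I}\mathcal{O}_{\overline{Z}}\cdot\overline{I^{N}}\mathcal{O}_{\overline{Z}}\subset I\mathcal{O}_{\overline{Z}}\cdot\overline{I^{N}}\mathcal{O}_{\overline{Z}}$,
and then one cancels the factor $\overline{I^{N}}\mathcal{O}_{\overline{Z}}=(\overline{I^{s}})^{k}\mathcal{O}_{\overline{Z}}$, which is invertible because $\overline{Z}$ is exactly the blow-up of $\overline{I^{s}}$ in $\overline{A}$. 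You instead argue stalk by stalk, pushing the integral-dependence relation for $h\in\overline{I}$ forward to the normal local domain $R=\mathcal{O}_{\overline{Z},z}$, dividing by the local generator $t$ of $IR$, and concluding $h/t\in R$ by normality --- in effect the classical fact that a principal ideal generated by a non-zero-divisor in a normal domain is integrally closed. Your route is more elementary (it avoids the finiteness of $\overline{E}$ over $E$ and the Veronese truncation) and it isolates the one non-formal input, which you correctly flag: that $I\mathcal{O}_{\overline{Z}}$ itself --- and not merely $\overline{I^{s}}\mathcal{O}_{\overline{Z}}$ --- is already an invertible sheaf of ideals. Your justification of this (a local generator of $I\mathcal{O}_{Z}$ stays a non-zero-divisor on $\overline{Z}$ because the stalks are domains and no component of $\overline{Z}$ maps into $Y$, the morphism $\overline{Z}\rightarrow X$ being bimeromorphic) is sound, granting the standing hypothesis, implicit throughout the blow-up setup, that $Y$ contains no irreducible component of $X$.
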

  
\begin{proof}
$\overline{E}$ is a finite type $E$-module, so for $N$ big enough, $I.\overline{I^{N}}=\overline{I^{N+1}}$. But $\overline{I}.\overline{I^N}\subset \overline{I^{N+1}}$, therefore:
\begin{equation}  \label{equation} \overline{{I}}{{\cal O}}_{\overline{Z}} \ . \   \overline{{I}^{{N}}} {{\cal O}}_{\overline{{Z}}}\subset{I} {\cal O}_{\overline{{Z}}}  \ . \  \overline{{I}^{{N}}} {{\cal O}}_{\overline{{Z}}}.
\end{equation}
 But if $N=k.s$, then $\overline{I^{N}}. \ {\cal O}_{\overline{Z}}=(\overline{I^{s}})^k. \  {\cal O}_{\overline{Z}}$. The latter ideal being invertible, we can simplify by $\overline{I^{N}}{\cal O}_{\overline{Z}}$ in the inclusion \eqref{equation}. Then $\overline{I}. {\cal O}_{\overline{Z}}\subset{I}.{\cal O}_{\overline{Z}}$.  The reverse inclusion is obvious.
\end{proof}

 \begin{proposition} \label{prop:preliminaries ii}
 $\overline{I}$  coincides with  the  set of elements of $\overline{A}$ which  define a section of $I.{\cal O}_{\overline{Z}}$.
 \end{proposition}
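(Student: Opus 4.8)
The plan is to prove the two inclusions separately. For $h\in\overline A$, saying that $h$ \emph{defines a section of $I\mathcal O_{\overline Z}$} means that the subsheaf $h\mathcal O_{\overline Z}$ of $\mathcal O_{\overline Z}$ is contained in the subsheaf $I\mathcal O_{\overline Z}$. The inclusion $\overline I\subseteq\{h\in\overline A:\ h\mathcal O_{\overline Z}\subseteq I\mathcal O_{\overline Z}\}$ is then immediate: since $\overline I$ is an ideal of $\overline A$ we have $h\overline A\subseteq\overline I$ for $h\in\overline I$, hence $h\mathcal O_{\overline Z}\subseteq\overline I\mathcal O_{\overline Z}$, and $\overline I\mathcal O_{\overline Z}=I\mathcal O_{\overline Z}$ by Proposition \ref{prop:preliminaries i}. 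This disposes of one direction.

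For the reverse inclusion, assume $h\in\overline A$ with $h\mathcal O_{\overline Z}\subseteq I\mathcal O_{\overline Z}$, and write $I=(g_1,\dots,g_r)$. Since $I\mathcal O_{\overline Z}$ is invertible, $\overline Z$ is covered by the open sets $V_j$ on which $g_j$ generates $I\mathcal O_{\overline Z}$; one may take $V_j$ to be the inverse image of the $j$-th affine chart $\operatorname{Spec}A[g_1/g_j,\dots,g_r/g_j]$ of the blow-up $\bl_IX$, so that, $\overline Z$ being the normalization of $\bl_IX$, the ring $\mathcal O_{\overline Z}(V_j)$ is the integral closure of $A[g_1/g_j,\dots,g_r/g_j]$ in its total ring of fractions. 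On $V_j$ the hypothesis gives $h/g_j\in\mathcal O_{\overline Z}(V_j)$ ($g_j$ being a non-zerodivisor there), so $h/g_j$ is integral over $A[g_1/g_j,\dots,g_r/g_j]$. Now I would clear denominators in such an integral equation: writing each coefficient as a polynomial in the $g_i/g_j$ and homogenizing, while keeping track of which power of $I$ each numerator lies in, one rewrites the equation as a monic relation for $g_j^{\,N_j-1}h$ with coefficients in the powers $(I^{N_j})^{k}$, that is $g_j^{\,N_j-1}h\in\overline{I^{N_j}}$; enlarging all the $N_j$ to a common value $N$ then yields $g_j^{\,N-1}h\in\overline{I^{N}}$ for every $j=1,\dots,r$.

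It remains to deduce $h\in\overline I$, and for this I would invoke the valuative description of the integral closure of an ideal: $h\in\overline I$ as soon as $v(h)\ge v(I)$ for every valuation $v$ centered at the origin and nonnegative on $\overline A$ (the finitely many Rees valuations of $I$, i.e.\ the orders of vanishing along the exceptional prime divisors of $\overline Z$, already suffice). Given such a $v$, choose $j$ with $v(g_j)=\min_i v(g_i)=v(I)$; then $g_j^{\,N-1}h\in\overline{I^{N}}$ gives $(N-1)v(I)+v(h)\ge Nv(I)$, hence $v(h)\ge v(I)$. (If $A$ is not a domain one runs this componentwise.) I expect the denominator-clearing step to be the main obstacle: the point is to convert ``integral over the blow-up chart $A[g_i/g_j]$'' into a genuinely \emph{monic} equation whose coefficients lie in the successive powers of $I$, and all the bookkeeping is concentrated there; the two inclusions and the valuative conclusion are then formal. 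One can also bypass valuations and read $h\in\overline I$ off directly by comparing the invertible ideal sheaves $h\mathcal O_{\overline Z}$ and $I\mathcal O_{\overline Z}$ at the codimension-one points of the normal space $\overline Z$, though this reduces to the same criterion.
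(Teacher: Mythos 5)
Your first inclusion is the paper's verbatim; the interest is in the reverse direction, where you and the paper diverge after the common starting point. Both arguments begin by translating the hypothesis $h\mathcal O_{\overline Z}\subseteq I\mathcal O_{\overline Z}$ into a statement in the affine charts of the (normalized) blow-up: you get a relation $g_j^{N-1}h\in\overline{I^{N}}$ for each $j$ by clearing denominators in the integral equation for $h/g_j$ over $A[g_1/g_j,\dots,g_r/g_j]$ (this step, which you flag as the main obstacle, does go through: if $D$ bounds the degrees of the coefficients $c_i$ as polynomials in the $g_k/g_j$, multiplying the monic relation by $g_j^{mD}$ gives a monic relation for $g_j^{D-1}h$ with $i$-th coefficient $c_i g_j^{iD}\in I^{iD}=(I^D)^i$). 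The paper instead assembles the per-chart conditions $f\,g_k^{\mu_k}\in I\cdot(\overline{I^{s}})^{\mu_k}$ into the single containment $f\cdot(\overline{I^{s}})^{N}\subset I\cdot(\overline{I^{s}})^{N}$ and then applies the Cayley--Hamilton determinant trick to the finite module $(\overline{I^{s}})^{N}$, producing an explicit integral-dependence equation of $f$ over $I$. You instead finish by appealing to the valuative criterion for integral closure of ideals (Rees' theorem: $h\in\overline I$ iff $v(h)\ge v(I)$ for the finitely many divisorial valuations coming from the exceptional components of $\overline Z$). This is correct and arguably more conceptual --- and, as you note, once one has $h\mathcal O_{\overline Z}\subseteq I\mathcal O_{\overline Z}$ with $I\mathcal O_{\overline Z}$ invertible on the normal space $\overline Z$, one can read the divisorial inequalities off directly at codimension-one points, bypassing the $g_j^{N-1}h$ bookkeeping entirely. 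The trade-off is self-containedness: the paper's determinant trick uses nothing beyond linear algebra over $\overline A$, while your route imports the valuative characterization of $\overline I$, a theorem the paper neither states nor proves. Both are valid; the paper's choice keeps the Preliminaries elementary.
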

 
 \begin{proof}
 If $f\in \overline{I}$, then $f$ obviously defines a section of $\overline{I}\mathcal{O}_{\overline{Z}}$. But $\overline{I}\mathcal{O}_{\overline{Z}}=I\mathcal{O}_{\overline{Z}}$ according to Proposition \ref{prop:preliminaries i}. Conversely, suppose that $f\in\overline{A}$ defines a section of $I\mathcal{O}_{\overline{Z}}$; by writing what this means in some affine open sets $\overline{Z}_{(g_k)}\subset \overline{Z}$, where $g_k\in\overline{I^s}$, one finds that there must exist some integers $\mu_k$ such that $f  .  g_{k}^{\mu_k} \in I .( \overline{I^{s}})^{\mu_k}$. 

 Let  $(g_k)$ be a finite family of generators of $\overline{I^{s}}$. For $N$ large enough, every monomial of degree $N$ in the $g_{k}$'s will contain one of the $g_{k}^{\mu_k}$ as a  factor, so:
 $$f. \ (\overline{I^s})^N\subset I. \ (\overline{I^s})^N,$$
i. e.,  by choosing a base $(e_i)$ of $(\overline{I^s})^N$,
$$f.\ e_i=\sum_{j} a_{ij}e_j,\hspace{0,5cm} a_{ij}\in I.$$
Since $\overline{A}$ can be supposed to be integral, we deduce from this that

$${\rm det}(f. \  {1}-\left \| a_{ij} \right \|)=0,$$ 
which is  an  equation of  integral dependence for $f$ on $I$.
\end{proof}

\subsection{Majoration theorems} \label{subsec:P5}
\begin{theorem}  [well known, see for example \cite{Abh64}] \label{thm:majoration 1}
Let $A$ be a reduced complex analytic algebra and let $(X,0)$ be  the associated germ. For every  $h \in \tot(A)$, the    following properties are equivalent:
\begin{enumerate}
    \item[i)] $h\in \overline{A}$
    \item[ii)] $h$ defines on $X^{\red}$ a  function germ  with bounded module.
    
\end{enumerate}

\end{theorem}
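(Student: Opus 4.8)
The plan is to prove the two implications separately, after the usual reduction to the irreducible case.

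\textbf{Reduction.} Let $\mathfrak p_1,\dots,\mathfrak p_r$ be the minimal primes of $A$, so that $\tot(A)=\prod_i\operatorname{Frac}(A/\mathfrak p_i)$ and, compatibly, $\overline A=\prod_i\overline{A/\mathfrak p_i}$. Writing $h=(h_i)_i$ and $(X_i,0)=\specan(A/\mathfrak p_i)$ for the irreducible components of $(X,0)$, both (i) and (ii) hold for $h$ iff they hold componentwise for each $h_i$ on $(X_i,0)$. So I may assume $A$ is an integral domain and $h\in\operatorname{Frac}(A)$, say $h=p/q$ with $q\neq0$; then $h$ is holomorphic on the dense open set $X\setminus V(q)$.

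\textbf{(i) $\Rightarrow$ (ii).} Starting from an integral dependence relation $h^{k}+a_{1}h^{k-1}+\cdots+a_{k}=0$ with $a_j\in A$, pick a representative of $(X,0)$ on which the $a_j$ are holomorphic and $|a_j|\le M$. At a point $x$ with $q(x)\neq0$, either $|h(x)|\le1$, or, dividing the relation by $h(x)^{k-1}$, one gets $|h(x)|\le\sum_{j}|a_j(x)|\,|h(x)|^{1-j}\le kM$. Hence $|h|\le\max(1,kM)$ on the dense open set where $h$ is defined, which is exactly statement (ii).

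\textbf{(ii) $\Rightarrow$ (i).} This is the substantial direction. Let $n\colon\overline X=\specan_X\overline{\mathcal O_X}\to X$ be the normalization recalled in the Preliminaries: $n$ is finite and surjective, $\overline X$ is normal, and $n$ restricts to an isomorphism over the dense open set $X\setminus N$. Pulling back, $n^{*}h$ is holomorphic on $\overline X\setminus n^{-1}(V(q))$, the complement of a nowhere dense analytic subset of $\overline X$. Since $n$ is finite, the hypothesis that $|h|$ is bounded near $0\in X$ forces $|n^{*}h|$ to be bounded near the finite fibre $n^{-1}(0)$. Now invoke the Riemann extension theorem in the form valid on a \emph{normal} complex space (the second Riemann continuation theorem: a locally bounded holomorphic function on the complement of a nowhere dense analytic subset of a normal complex space extends holomorphically). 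Thus $n^{*}h$ extends to $\widetilde h\in\Gamma(W,\mathcal O_{\overline X})$ for a neighbourhood $W$ of $n^{-1}(0)$, which by finiteness of $n$ we may take of the form $W=n^{-1}(V)$, so $\widetilde h\in\Gamma(V,\overline{\mathcal O_X})$ and its germ at $0$ lies in $\overline{\mathcal O_{X,0}}=\overline A$. Since $\widetilde h$ agrees with $h$ on a dense open set and $A$ is a domain, $h=\widetilde h\in\overline A$.

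\textbf{Main obstacle.} The only non-formal ingredient is this appeal to the Riemann extension theorem on the possibly singular normal space $\overline X$. If one prefers to stay closer to the blow-up techniques used throughout the paper, one can argue valuatively instead: $h\in\operatorname{Frac}(A)$ belongs to $\overline A$ iff $v(h)\ge0$ for every divisorial valuation of $\operatorname{Frac}(A)$ centred at $0$, equivalently (via the curve selection lemma / Puiseux parametrizations on normalized blow-ups) iff $h\circ\gamma$ has no pole for every analytic arc $\gamma\colon(\C,0)\to(X,0)$ meeting the smooth locus; and a one-variable meromorphic function bounded near the origin has no pole. Either route completes the proof, and the reverse implication shows the two descriptions of $\overline A$ coincide.
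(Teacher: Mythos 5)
The paper does not prove this theorem; it is stated as ``well known'' with a citation to Abhyankar's \emph{Local analytic geometry}, so there is no internal proof to compare against. Your proof is nonetheless correct and is essentially the standard one. The reduction to the irreducible case is clean. For (i)~$\Rightarrow$~(ii), the estimate obtained by dividing the integral dependence relation through by $h(x)^{k-1}$ when $|h(x)|>1$ is exactly right. For (ii)~$\Rightarrow$~(i), passing to the normalization $n\colon \overline X\to X$ and invoking the Riemann extension theorem on the \emph{normal} space $\overline X$ is precisely the expected argument and is valid: $n^{-1}(V(q))$ is a nowhere dense analytic subset of the irreducible normal space $\overline X$, finiteness of $n$ transfers the local boundedness of $h$ near $0$ to local boundedness of $n^{*}h$ near the finite fibre $n^{-1}(0)$, and the extension theorem for normal spaces gives $\widetilde h\in\Gamma(n^{-1}(V),\mathcal O_{\overline X})=\Gamma(V,\overline{\mathcal O_X})$, whose germ lies in $\overline A$. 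The alternative valuative sketch at the end is also a legitimate route, and it fits the blow-up/arc-testing spirit of the paper, but it is only sketched (the equivalence ``$h\in\overline A$ iff $v(h)\geq0$ for all divisorial valuations centred at $0$'' and the reduction to arcs via curve selection both deserve a word of justification). As written, the main argument is complete and correct.
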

 
 \begin{theorem} \label{thm:majoration 2}
 Let $A$ be a complex analytic algebra,  let $(X,0)$ be  the associated germ, let  $I=(x_1,\dots,x_p)$ be an ideal of $A$ and let $\overline{Z}$  be the normalized blow-up of $I$ in $X$. For every $h \in \tot(A)$, the  following  properties are equivalent:
 \begin{enumerate}
     \item [i)] $h\in I.  \mathcal{O}_{\overline{Z}}$
     \item[ii)] $h$ defines on $X^{\red}$ a germ of function with module   bounded by $sup|x_i|$  (up to multiplication by a constant).
 \end{enumerate}
 
 \end{theorem}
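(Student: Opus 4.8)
The plan is to deduce Theorem \ref{thm:majoration 2} from Theorem \ref{thm:majoration 1} by working upstairs on the normalized blow-up $\overline{Z}$, where the ideal $I.\mathcal{O}_{\overline{Z}}$ becomes invertible, hence locally principal. First I would reduce to the reduced case: since $\tot(A)$ and the statement only involve $X^{\red}$, and since the normalized blow-up of $I$ in $X$ factors through $X^{\red}$ (its irreducible components are those of $X^{\red}$), we may assume $A$ reduced and apply Theorem \ref{thm:majoration 1} freely, both on $X$ and on the normal space $\overline{Z}$.

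Next I would set up the local picture on $\overline{Z}$. Cover $\overline{Z}$ by finitely many affine opens; on each such open $U$ the invertible ideal $I.\mathcal{O}_{\overline{Z}}$ is generated by a single element, and after reordering we may take it to be the image of one of the generators, say $x_{j}$, so that each pullback $x_i\vert_U = a_i\, x_j\vert_U$ with $a_i\in\mathcal{O}_{\overline{Z}}(U)$; in particular $|x_i|\le C\,|x_j|\le C\sup_k|x_k|$ on $U$, and $\sup_k|x_k|$ is, up to a bounded factor, equal to $|x_j|$ there. For the implication (i) $\Rightarrow$ (ii): if $h\in I.\mathcal{O}_{\overline{Z}}$, then on $U$ we can write $h\vert_U = b\, x_j\vert_U$ with $b\in\mathcal{O}_{\overline{Z}}(U)$; since $\overline{Z}$ is normal (indeed its structure sheaf is, by construction, integrally closed), $b$ is bounded near the fibre over $0$ by Theorem \ref{thm:majoration 1}, so $|h|\le C'\,|x_j|\le C'\sup_k|x_k|$ on $U$. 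Because $\overline{Z}\to X$ is proper and surjective, every point of $X^{\red}$ near $0$ is the image of a point of $\overline{Z}$, and finitely many $U$'s suffice to cover the relevant compact fibre, so the bound descends: $h$ defines on $X^{\red}$ a germ with $|h|\le C\sup_i|x_i|$.

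For the converse (ii) $\Rightarrow$ (i): suppose $h\in\tot(A)$ satisfies $|h|\le C\sup_i|x_i|$ on $X^{\red}$. The boundedness alone already gives $h\in\overline{A}$ by Theorem \ref{thm:majoration 1}, so $h$ makes sense as a section of $\mathcal{O}_{\overline{Z}}$ upstairs, and on each affine open $U$ as above we have $|h\vert_U|\le C\sup_i|x_i|\le C''\,|x_j|$ where $x_j$ generates $I.\mathcal{O}_{\overline{Z}}$ on $U$. Thus $h/x_j$ is a meromorphic function on the normal space $U$ with bounded module, hence lies in $\mathcal{O}_{\overline{Z}}(U)$ by Theorem \ref{thm:majoration 1} again; therefore $h\vert_U\in x_j\mathcal{O}_{\overline{Z}}(U) = I.\mathcal{O}_{\overline{Z}}(U)$. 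As this holds on each member of a cover, $h\in I.\mathcal{O}_{\overline{Z}}$.

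The step I expect to require the most care is not any single estimate but the passage between the base and the blow-up: one must check that the normalized blow-up of $I$ depends only on $X^{\red}$ so that Theorem \ref{thm:majoration 1} is applicable on $\overline{Z}$, and — more delicately — that a bound valid pointwise on $X^{\red}\setminus\{0\}$ pulls back to a bound valid on $\overline{Z}$ away from the exceptional fibre, and then, using properness and a finite subcover of the compact fibre $\pi^{-1}(0)$, descends again to the stated Lipschitz-type bound on $X^{\red}$ with a single constant. The identification ``$\sup_i|x_i|$ is comparable to $|x_j|$ locally on $\overline{Z}$'', which hinges on $I.\mathcal{O}_{\overline{Z}}$ being invertible, is the geometric heart of the argument; everything else is a routine application of the boundedness criterion of Theorem \ref{thm:majoration 1} on the normal space $\overline{Z}$.
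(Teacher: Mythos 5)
Your proposal is correct and follows essentially the same route as the paper's own (quite terse) proof: cover $\overline{Z}$ by opens on which $I.\mathcal{O}_{\overline{Z}}$ is generated by a single $x_j$ (via Nakayama), observe there that $\sup_i|x_i|$ is comparable to $|x_j|$, reduce to Theorem \ref{thm:majoration 1} on the normal space $\overline{Z}$, and descend via properness and surjectivity of $\overline{Z}\to X$. You simply spell out the reduction to the reduced case and the two directions more explicitly than the paper does.
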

 \begin{proof}  Let $A$  be a noetherian local  ring and let $I=(x_1,\dots,x_p)$ be a  principal ideal of $A$. Then $I$ is generated by one of the $x_i$'s (easy consequence of Nakayama's lemma).  
 Thus $\overline{Z}$ is covered by a finite number of open sets such that  in each of them,  one of the $x_i$'s generates $  I.\mathcal{O}_{\overline{Z}}$.

 To show that $\frac{|h|}{\sup |x_i|}$ is bounded on $X$, we just have to prove  that it is bounded on each of these open-sets, since $\overline{Z}\rightarrow X$ is proper and surjective.
In the open set where $x_i$ generates $  I.\mathcal{O}_{\overline{Z}}$,  $\frac{|h|}{\sup |x_i|}$ is  bounded if and only if   $\frac{|h|}{|x_i|}$ is bounded and we are back to theorem \ref{thm:majoration 1}.
 
 \end{proof}
\begin{corollary}  (from Preliminary \ref{subsec:P4}) \label{cor:CorollaryM2} 
 For every  $h\in \overline{A}$, the following properties are equivalent:
 \begin{enumerate}
     \item [i)] $h\in \overline{I} $
     \item[ii)] $h$ defines on $X^{\red}$ a germ of function  with  module bounded by $sup|x_i|$   (up to multiplication by a constant).
 \end{enumerate}
 \end{corollary}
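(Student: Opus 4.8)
The plan is to obtain the stated equivalence by simply chaining Proposition \ref{prop:preliminaries ii} with Theorem \ref{thm:majoration 2}, both of which are phrased in terms of membership of $h$ in the invertible ideal $I\mathcal{O}_{\overline{Z}}$, so that the Corollary is essentially a formal concatenation of results already established in Preliminary \ref{subsec:P4} and Preliminary \ref{subsec:P5}.

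First I would observe that the hypothesis $h\in\overline{A}$ is exactly what makes clause (ii) meaningful and puts us in the range of the earlier theorems: since $\overline{A}\subset\tot(A)$, Theorem \ref{thm:majoration 2} applies to $h$; and by Theorem \ref{thm:majoration 1}, an element of $\overline{A}$ already defines a function germ on $X^{\red}$ with bounded module, so it is legitimate to ask whether that module is moreover bounded by $\sup|x_i|$ up to a multiplicative constant. Conversely, clause (ii) presupposes $h$ bounded, hence forces $h\in\overline{A}$ again by Theorem \ref{thm:majoration 1}, and clause (i) forces $h\in\overline{A}$ because $\overline{I}$ is by definition contained in $\overline{A}$; so the ambient hypothesis $h\in\overline{A}$ costs nothing on either side.

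Then the deduction runs as follows. By Proposition \ref{prop:preliminaries ii}, $h\in\overline{I}$ if and only if $h$ defines a section of $I\mathcal{O}_{\overline{Z}}$; I would identify this last condition with clause (i) of Theorem \ref{thm:majoration 2}, namely $h\in I\mathcal{O}_{\overline{Z}}$. The point of this identification is only a matter of unwinding notation: the pullback of $h$ under the normalized blow-up $n\circ\pi\colon\overline{Z}\to X$ is a priori a section of (the total fraction sheaf of) $\mathcal{O}_{\overline{Z}}$, and ``defining a section of $I\mathcal{O}_{\overline{Z}}$'' means precisely that this pullback lies in the invertible subsheaf $I\mathcal{O}_{\overline{Z}}\subset\mathcal{O}_{\overline{Z}}$ — which is verbatim clause (i) of Theorem \ref{thm:majoration 2}. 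Applying that theorem converts this into clause (ii), the majoration $|h|\leqslant C\sup|x_i|$ on $X^{\red}$, which closes the chain of equivalences.

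Consequently the argument has no obstacle of its own: the substantive work lies entirely upstream — Proposition \ref{prop:preliminaries ii} translates the integral closure $\overline{I}$ into a section condition on $\overline{Z}$, and Theorem \ref{thm:majoration 2} translates that section condition into a metric majoration by reducing, along the proper surjection $\overline{Z}\to X$, to the local situation where $I\mathcal{O}_{\overline{Z}}$ is principal and generated by one of the $x_i$, where it is just Theorem \ref{thm:majoration 1}. The only care needed is the bookkeeping between $\tot(A)$, $\overline{A}$ and $\overline{I}$ described above, and the verification that the two occurrences of ``$h\in I\mathcal{O}_{\overline{Z}}$'' in Proposition \ref{prop:preliminaries ii} and Theorem \ref{thm:majoration 2} refer to the same subsheaf inclusion.
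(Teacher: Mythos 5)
Your proposal is correct and coincides with the paper's intended argument: the corollary is obtained precisely by chaining Proposition~\ref{prop:preliminaries ii} ($h\in\overline{I}$ iff $h$ defines a section of $I\mathcal{O}_{\overline{Z}}$) with Theorem~\ref{thm:majoration 2} ($h\in I\mathcal{O}_{\overline{Z}}$ iff $|h|$ is bounded by $\sup|x_i|$ up to a constant on $X^{\red}$), the cross-reference ``(from Preliminary~\ref{subsec:P4})'' being exactly the pointer to Proposition~\ref{prop:preliminaries ii}. Your bookkeeping remarks about $\overline{A}\subset\tot(A)$ and about (ii) already forcing $h\in\overline{A}$ via Theorem~\ref{thm:majoration 1} are accurate and harmless.
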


 \section{Algebraic characterization of Lipschitz fractions}
\label{section1} 
 Let  $A$ be a reduced complex analytic algebra and  let $\overline{A}$ be its normalization 
  ($\overline{A}$ is a direct sum of normal analytic algebras, each being therefore an integral domain, one per irreducible component of the germ associated to $A$). Consider the ideal:
$$I_A=\ker(\overline{A}  \ \underset{\mathbf C}{\widearc{\otimes}} \ \overline{A}\rightarrow \overline{A}\underset{A}{\otimes }\overline{A}),$$
where   $\widearc{\otimes}$ means the operation on the algebras that corresponds to the cartesian product of the analytic spaces.

 \begin{definition} 
 We will  call {\it Lipschitz saturation} \index{saturation!Lipschitz} of $A$ the algebra:
\[
\widetilde{A}=\{ f\in\overline{A} \ \ | \ \  f{\widearc{\otimes}} 1  -  1{\widearc{\otimes}} f \in\overline{I_A}\}
\]
where $\overline{I_A}$ denotes the integral closure  of the ideal $I_A$ (in the sense of Subsection  \ref{subsec:P4}).
 \end{definition}
\begin{theorem} \label{thm:Theorem0} $\widetilde{A}$  is the set of fractions of $A$ that define Lipschitz function germs on the analytic space $X$, a small enough representative of the germ $(X,0)$ associated to $A$.
\end{theorem}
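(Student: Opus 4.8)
\noindent\emph{Proposed proof.}
The plan is to read the condition $f\,\widearc{\otimes}\,1-1\,\widearc{\otimes}\,f\in\overline{I_A}$ as a metric estimate, by applying Corollary \ref{cor:CorollaryM2} not to $A$ itself but to the complex analytic algebra $B=\overline A\,\widearc{\otimes}_{\mathbf C}\,\overline A$ of the product germ $\overline X\times\overline X$, together with the ideal $I_A$ and the element $h=f\,\widearc{\otimes}\,1-1\,\widearc{\otimes}\,f\in B$.

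First I would fix a representative of the germ and an embedding $X\hookrightarrow\mathbf C^N$, i.e. a system of generators $x_1,\dots,x_N$ of $A$ as a complex analytic algebra, and check that $I_A$ is generated in $B$ by the elements $\xi_i:=x_i\,\widearc{\otimes}\,1-1\,\widearc{\otimes}\,x_i$, $i=1,\dots,N$. Indeed $\overline A\otimes_A\overline A$ is finite over $A$, hence coincides with the analytic tensor product and with $B/JB$, where $J=\ker(A\,\widearc{\otimes}_{\mathbf C}\,A\to A)$ is generated by the elements $a\otimes1-1\otimes a$, $a\in A$; writing $a$ as a convergent power series $P(x_1,\dots,x_N)$ and using an identity $P(u)-P(v)=\sum_i(u_i-v_i)\,Q_i(u,v)$ with the $Q_i$ convergent, one gets $a\,\widearc{\otimes}\,1-1\,\widearc{\otimes}\,a\in(\xi_1,\dots,\xi_N)$. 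Geometrically $\xi_i$ is the function $(\tilde x,\tilde y)\mapsto x_i(\tilde x)-x_i(\tilde y)$ on $\overline X\times\overline X$, so $\sup_i|\xi_i|$ is, up to a fixed multiplicative constant, the Euclidean distance $\norm{\tilde x-\tilde y}$ between the images of $\tilde x$ and $\tilde y$ in $\mathbf C^N$.

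Then Corollary \ref{cor:CorollaryM2}, applied to $B$, $I_A=(\xi_1,\dots,\xi_N)$ and $h$ (which lies in $B$, hence in $\overline B$, since $f\in\overline A$), gives: $f\,\widearc{\otimes}\,1-1\,\widearc{\otimes}\,f\in\overline{I_A}$ if and only if $(\tilde x,\tilde y)\mapsto f(\tilde x)-f(\tilde y)$ has module bounded, up to a constant, by $\sup_i|x_i(\tilde x)-x_i(\tilde y)|$ on $\overline X\times\overline X$. By the previous paragraph this means exactly $|f(\tilde x)-f(\tilde y)|\le C\,\norm{\tilde x-\tilde y}$, i.e. the pullback $n^{*}f$ of $f$ to $\overline X$ is Lipschitz for the distance induced from $X\subset\mathbf C^N$ through the finite surjective normalization $n\colon\overline X\to X$. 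Since this pseudo-distance vanishes precisely on the pairs $(\tilde x,\tilde y)$ with $n(\tilde x)=n(\tilde y)$, any such $f$ takes the same value at two points of a fibre of $n$, hence descends to a genuine continuous function germ on $X$ which is then Lipschitz there; conversely, a fraction $g$ of $A$ defining a Lipschitz germ on $X$ is in particular bounded, so $g\in\overline A$ by Theorem \ref{thm:majoration 1}, and $n^{*}g$ satisfies the same estimate, whence $g\in\widetilde A$. One finishes by noting that a change of the generating system $x_i$ only replaces $\norm{\cdot-\cdot}$ by a bi-Lipschitz equivalent distance, so that the notion ``Lipschitz function germ on $X$'', like $\widetilde A$ itself, does not depend on the embedding.

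The part that deserves care — everything else being the power series identity above and an elementary comparison of norms — is the very first reduction: one must know that $B=\overline A\,\widearc{\otimes}_{\mathbf C}\,\overline A$ is again a complex analytic algebra whose associated germ is the product $\overline X\times\overline X$ (so that the $\xi_i$ and $h$ are honest functions on it and Corollary \ref{cor:CorollaryM2} genuinely applies), and that it is reduced — which follows from the stability of normality under analytic products over $\mathbf C$, applied to the normal algebra $\overline A$. Granting this, the theorem is just Corollary \ref{cor:CorollaryM2} combined with the description of $I_A$ and the identification of $\sup_i|\xi_i|$ with the ambient distance.
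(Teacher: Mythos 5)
Your proposal is correct and follows essentially the same route as the paper: identify $I_A$ as the ideal generated by the $\xi_i=x_i\,\widearc\otimes\,1-1\,\widearc\otimes\,x_i$, observe that the Lipschitz estimate on $X$ is precisely the bound $|f\,\widearc\otimes\,1-1\,\widearc\otimes\,f|\le C\sup_i|\xi_i|$ on $\overline X\times\overline X$, and invoke Corollary \ref{cor:CorollaryM2}. You merely spell out some steps the paper leaves implicit — the power-series identity showing the $\xi_i$ generate $I_A$, the normality/reducedness of $\overline X\times\overline X$ needed to apply the corollary, and the descent of $f$ from $\overline X$ to a genuine function on $X$.
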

 \begin{proof}
 
Firstly, let us remark that  that every Lipschitz function is locally bounded and that  the set of bounded fractions of $A$ constitutes the normalization $\overline{A}$ (Theorem  \ref{thm:majoration 1}). 
However, denoting by $\overline{X}$  the disjoint sum of germs of normal analytic spaces associated to the algebra $\overline{A}$, the Lipschitz condition 
$ |f(x)-f(x')|\leqslant C \sup|z_i-z'_i| $
for an element $f\in \overline{A}$ is equivalent to say that on $\overline{X}\times\overline{X}$, the function $f\widearc{\otimes} 1  -  1\widearc{\otimes} f $ has its  module  bounded   by the supremum of the modules of the $z_i\widearc{\otimes}1-1\widearc{\otimes} z_i$, where $z_1, \ldots,z_r$ denotes a system of generators of the maximal ideal of $A$. But the ideal generated by $z_i\widearc{\otimes}1-1\widearc{\otimes} z_i, i=1 \ldots,r$  is nothing but  the ideal $I_A$ defined above. Theorem \ref{thm:Theorem0} is therefore a simple application of Corollary \ref{cor:CorollaryM2}.
 \end{proof}

\begin{corollary}   \label{cor:Corollary0} $\widetilde{A}$ is a local algebra (and thus an analytic algebra).
\end{corollary}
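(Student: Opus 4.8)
The plan is to extract everything from Theorem~\ref{thm:Theorem0}. First I would record the two structural facts it gives for free: by that theorem $\widetilde A$ is the ring of fractions of $A$ defining Lipschitz function germs on a small enough representative $X$ of $(X,0)$, so it is a subring of $\overline A$ containing the constants $\C\subset A$; and, being an $A$-submodule of the finite $A$-module $\overline A$ over the noetherian ring $A$, it is itself a finite $A$-module. Since a Lipschitz germ is continuous, each $f\in\widetilde A$ has a well-defined value $f(0)\in\C$, namely the value at $0$ of the (unique, by density and continuity) continuous germ on $X$ representing the fraction $f$. Evaluation at $0$ is then a ring homomorphism $\operatorname{ev}_0\colon\widetilde A\to\C$; it is surjective because it fixes the constants, so $\mathfrak n:=\ker\operatorname{ev}_0$ is a maximal ideal with $\widetilde A/\mathfrak n\cong\C$.

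The main step is to show that $\mathfrak n$ is the \emph{only} maximal ideal, i.e. that every $f\in\widetilde A$ with $f\notin\mathfrak n$ is a unit of $\widetilde A$. I would argue as follows. Such an $f$ has $f(0)\neq0$; restricted to any irreducible component of $X$ (which accumulates at $0$) the continuous function $f$ tends to $f(0)\neq0$, so $f$ is not identically zero on any component of the reduced germ, hence $f$ is a non-zero-divisor of $A$ and $1/f\in\tot(A)$. By continuity of $f$ we may shrink the representative so that $|f|\geqslant\delta>0$ on it, so $|1/f|\leqslant\delta^{-1}$ is bounded and $1/f\in\overline A$ by Theorem~\ref{thm:majoration 1}. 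Finally $1/f$ is again Lipschitz, since for $x,x'$ in this representative
$$\left|\frac1{f(x)}-\frac1{f(x')}\right|=\frac{|f(x')-f(x)|}{|f(x)|\,|f(x')|}\leqslant\delta^{-2}\,|f(x)-f(x')|,$$
whence $1/f\in\widetilde A$ by Theorem~\ref{thm:Theorem0}. Thus $\widetilde A\setminus\mathfrak n\subset\widetilde A^{\times}$; a proper ideal meets no unit, so it lies in $\mathfrak n$, and $\widetilde A$ is local with maximal ideal $\mathfrak n$ and residue field $\C$.

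For the parenthetical assertion it then suffices to invoke the standard fact that a local ring which is module-finite over a complex analytic algebra and has residue field $\C$ is again a complex analytic algebra; here $\widetilde A$ is module-finite over $A$ (first paragraph), local (second paragraph), with residue field $\C$. I do not expect a genuine obstacle in this proof: once Theorem~\ref{thm:Theorem0} is available it is essentially the remark that a Lipschitz fraction that does not vanish at $0$ is invertible within the Lipschitz fractions, the inverse being Lipschitz by the displayed estimate. The only point needing a little care is keeping track that $1/f$ is still a \emph{fraction} of $A$ (the non-zero-divisor property), which is where the reducedness of $A$ and the fact that every component of the germ passes through $0$ are used. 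As an alternative to the unit computation, one could instead invoke the henselianity of the analytic local ring $A$: a module-finite $A$-algebra is then a finite product of local rings, and this product has a single factor because $\widetilde A$ has no nontrivial idempotent — by Theorem~\ref{thm:Theorem0} an idempotent of $\widetilde A$ is a continuous $\{0,1\}$-valued function on the connected germ $X$, hence constant.
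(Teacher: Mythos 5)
Your proof is correct, but your main argument takes a different route from the paper's. The paper argues structurally: since $A\subset\widetilde A\subset\overline A$ and $\overline A$ is a finite $A$-module, $\widetilde A$ is a finite algebra over the (henselian) analytic local ring $A$, hence a finite direct sum of local analytic algebras; one then rules out more than one summand by observing that a nontrivial idempotent $1\oplus0\oplus\cdots\oplus0$ would be a $\{0,1\}$-valued function germ that is $1$ on some components of $X$ and $0$ on others, hence discontinuous at $0$ and a fortiori not Lipschitz. This is exactly the ``alternative'' you sketch in your last sentence, so you have in fact rediscovered the paper's proof as a side remark. Your primary argument instead verifies locality directly by exhibiting the maximal ideal $\mathfrak n=\ker(\operatorname{ev}_0)$ and checking that any $f\in\widetilde A$ with $f(0)\neq0$ is a unit: $1/f$ is bounded (so in $\overline A$ by Theorem~\ref{thm:majoration 1}) and Lipschitz by the displayed $\delta^{-2}$ estimate, hence in $\widetilde A$ by Theorem~\ref{thm:Theorem0}. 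That route is more elementary in that it does not invoke the Hensel/idempotent structure theory, and it has the small bonus of explicitly identifying the maximal ideal and residue field $\C$, which makes the ``analytic algebra'' conclusion immediate from module-finiteness over $A$. One small imprecision: $f$ lies in $\overline A$, not in $A$ in general, so the relevant statement is that $f$ is a unit of $\tot(A)$ (equivalently a non-zero-divisor of $\overline A$), which is what your component-by-component nonvanishing argument actually shows; this does not affect the conclusion.
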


\begin{proof}
Since the algebra $\widetilde{A}$ is  intermediate between A and $\overline{A}$, it is a direct sum of analytic algebras.  If this sum had more than one term, the element $1\oplus 0\oplus\dots\oplus0$ of $\widetilde{A}$ would define on $X$ a germ of  function equal to $1$  on at least one of the irreducible components of $X$, and  to $0$ on another of these components. But such a function could not be continuous on $X$ and  a fortiori not Lipschitz. 
\end{proof}

The following geometric construction, which comes from  Subsection \ref{subsec:P4}, will play a fundamental role in the sequel. 
 We will associate the  following commutative diagram to the analytic space germ $X$: 

 $$\xymatrix { D_X  \ \    \ar[d]  \ar@{^{(}->}[r]&\ \ E_X \ar[d] \\
 \overline{X} \underset{X}{\times}  \overline{X} \ \ \ar@{^{(}->}[r]& \ \   \overline{X} \times   \overline{X}
 }$$
where $E$ denotes the projective object over $\overline{X}\times\overline{X}$ obtained by   the blow-up with  center $ \overline{X} \underset{X}{\times}  \overline{X}$ followed by the normalization  (i.e.,  $E_X$ is the normalized blow-up of the ideal $I_A$ which defines $ \overline{X} \underset{X}{\times}  \overline{X}$  in $\overline{X}\times\overline{X}$); 
the space $D_X$ is the {\it exceptional divisor}, \index{exceptional divisor} inverse image of $ \overline{X} \underset{X}{\times}  \overline{X}$ in $E_X$.  According to Subsection \ref{subsec:P4},  the condition:
$$f{\widearc{\otimes}} 1  -  1{\widearc{\otimes}} f \in\overline{I_A},$$
which defines $\widetilde{A}$, is equivalent to:
$$(f{\widearc{\otimes}} 1  -  1{\widearc{\otimes}} f)|  D_X=0.$$
In other words, the germ  $\widetilde{X}$ associated with the analytic algebra $\widetilde{A}$ is nothing but 
the coequalizer\footnote{So we have a canonical morphism of analytic spaces ${D}_{X} \rightarrow  \widetilde{{X}}.$} of the  canonical  double arrow
$${D}_{{X}} \rightrightarrows \overline{{X}}$$ obtained by composing the natural map ${D}_{{X}} \rightarrow \overline{X}\times\overline{X}$ with the two projections to $\overline{{X}}$.
This germ of analytic space $\widetilde{X}$ will be called the {\it the  Lipschitz saturation} \index{saturation!Lipschitz} of the germ $X$.

It  is easy to see that the above local  construction can be  globalized: it is well known for the  objects $E_{X}$ and $D_{X}$, which come from blow-ups and normalizations. Likewise for $\widetilde{X}$:   it is easy to define, on an analytic space $X = (| X |, {\cal O}_{X})$, the sheaf     $\widetilde{\cal O}_{X}$  of germs  of Lipschitz fractions, and to verify that it is a coherent sheaf  of ${\cal O}_{X}$-modules (as a subsheaf of the coherent sheaf $\overline{\cal O}_{X}$ ); we thus define an analytic space $\widetilde{X} = (| X |, \widetilde{\cal O}_{X})$ called {\it the Lipschitz saturation} of $X = (| X |, {\cal O}_{X})$, whose underlying topological space $| X |$ coincides with that of $X$ (in fact, the canonical morphism is bimeromorphic and with Lipschitz inverse, so it is a homeomorphism).

\vskip0,3cm\noindent
{\bf Question 1.} The  inclusion $\widetilde{A} \subset \overline{A}$ was obvious in the transcendental interpretation: "every Lipschitz fraction is bounded". 

But if one is interested in objects other than analytic algebras, for example in algebras of formal series, there is no longer any reason for $\overline{A}$ to play a particular role in the definition of $\widetilde{A}$. For example, we can define, for any extension $B$ of $A$ in its total fractions ring, the {\it Lipschitz saturation of $A$ in $B$}:  \index{saturation!Lipschitz}  
$$\widetilde{A}(B)=\left\{f \in B | f\otimes 1-1 \otimes f \in \overline{I_{A(B)}}\right\}$$
with  $$I_{A(B)}=\ker(B \underset{\mathbf C}{\otimes} B \rightarrow B  \underset{A}{\otimes} B).$$

The question then arises whether we still have the inclusion $\widetilde{A}(B) \subset \overline{A}$.

\section{Geometric interpretation of the exceptional divisor $\mathbf{{D}_{X}}$: pairs of  infinitely near points on $X$} \label{section2}

Each point of ${D}_{{X}}^{\red}$ (the reduced space of the exceptional divisor ${D}_{{X}}$) will be interpreted as a  {pair of infinitely near points}  \index{infinitely near points} on X. The different irreducible components $^{\tau} {D}_{{X}}^{\red}$ of ${D}_{{X}}^{\red}$, labelled by the index $\tau$, will correspond to different {\it types} of  infinitely near points. The image of $^{\tau} {D}_{{X}}^{\red}$ in X   (by the canonical map $^{\tau} {D}_{{X}}^{\red} \hookrightarrow  {D}_{{X}} \rightarrow \widetilde{{X}} \rightarrow {X}$) is an irreducible analytic subset germ $^{\tau} {X} \subset {X}$, which we can call {\it confluence locus  \index{confluence locus} of  the infinitely near points of type $\tau$}. Among the  types of infinitely near points, it is necessary to distinguish the {\it trivial types} \index{confluence locus!trivial type} whose confluence points are the irreducible components of X:  the  {\it generic} point of a {\it trivial}  $^{\tau} {D}_{{X}}^{\red}$  will be a pair obtained  by making two points of X tend towards the same {smooth} point of X. All the other ({\it non-trivial}) types have their confluence locus  consisting of   {singular} points of X: for example, we will see later that every hypersurface has as non-trivial confluence locus the components of codimension $1$ of its singular locus.

 What do the Lipschitz fractions become in this context?  
 We have seen in Section \ref{section1} that a Lipschitz fraction is an element $ f \in {\overline{A}}$ such that $\left( f \otimes 1 - 1 \otimes f\right) | {D}_{X}=0$. But, since ${D}_{X} $ is a divisor of the normal space ${E}_X$, this condition will be satisfied everywhere if it is only satisfied  in a neighbourhood  of a point of each irreducible component of this divisor; or, in intuitive language: ``to verify the  Lipschitz condition  it is enough to verify  it for a pair of infinitely near  points of each type''.  Notice that we do not need to worry about trivial types, for which the condition is trivially satisfied for all $ f \in {\overline{A}}$ (note also that the trivial $^{\tau} {D}_{{X}}$  are reduced). 

We deduce  from this the following result.
\begin{theorem} \label{theorem 1}
A meromorphic function  which is locally bounded on the complex analytic space $X$ is locally Lipschitz at every  point if only it is locally Lipschitz at one point in each confluence locus $ ^{\tau} {X} $.
\end{theorem}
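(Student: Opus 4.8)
The statement is essentially a restatement of the observation made just before it, so the plan is to package that observation carefully. The key point is the one already emphasized in the text: the Lipschitz condition on a bounded fraction $f \in \overline{A}$ is equivalent, by Theorem~\ref{thm:Theorem0} and the discussion following it, to the vanishing $(f\widearc{\otimes}1 - 1\widearc{\otimes}f)\vert D_X = 0$; and since $D_X$ is a Cartier divisor in the \emph{normal} space $E_X$, a section of the relevant invertible sheaf that vanishes on a neighbourhood of one point of every irreducible component of $D_X$ vanishes identically on $D_X$. So first I would recall this reduction and state precisely the local-to-global principle for divisors in normal spaces: if $s$ is a section of an invertible $\mathcal{O}_{E_X}$-module and $s$ vanishes near a generic point of each irreducible component $^{\tau}D_X^{\red}$, then $s$ vanishes on all of $D_X$ (the zero locus of $s$ is a closed analytic subset which, containing a generic point of each component of the divisor, must contain the whole divisor because $E_X$ is reduced and the components are irreducible).

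Next I would translate "$f$ is locally Lipschitz at a point $p \in X$" into a statement on $D_X$. A meromorphic function locally bounded on $X$ gives an element of $\overline{A}$ locally (by Theorem~\ref{thm:majoration 1}); being locally Lipschitz near $p$ means precisely that $|f(x) - f(x')| \le C\sup_i|z_i - z_i'|$ for $x,x'$ near the points of $\overline{X}$ lying over $p$, which by Corollary~\ref{cor:CorollaryM2} (exactly as in the proof of Theorem~\ref{thm:Theorem0}, but localized near the fibre over $p$) is equivalent to the vanishing of $f\widearc{\otimes}1 - 1\widearc{\otimes}f$ on the part of $D_X$ lying over $p$. Hence "$f$ is locally Lipschitz at some point of the confluence locus $^{\tau}X$ for every $\tau$" means $f\widearc{\otimes}1 - 1\widearc{\otimes}f$ vanishes on a nonempty open (hence, up to shrinking, near a generic point) subset of each $^{\tau}D_X^{\red}$ — this uses that the canonical map $^{\tau}D_X^{\red} \to {}^{\tau}X$ is surjective, so a point in $^{\tau}X$ is the image of a point of $^{\tau}D_X^{\red}$.

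Combining these two steps: vanishing near a generic point of each component of $D_X$ forces vanishing on all of $D_X$ by the normality argument, which by the first reduction is equivalent to $f$ being Lipschitz, i.e.\ locally Lipschitz at every point of $X$. I would also note, as the text does, that the trivial types require no attention since $f\widearc{\otimes}1 - 1\widearc{\otimes}f$ automatically vanishes there for every $f \in \overline{A}$ (two points tending to the same smooth point), so the hypothesis is really only a condition on the non-trivial types; but including them does no harm.

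The one genuine subtlety — the step I expect to be the main obstacle — is the localization in the second paragraph: Corollary~\ref{cor:CorollaryM2} and Theorem~\ref{thm:majoration 2} are stated for germs at a point, whereas here I need the equivalence "$f$ Lipschitz near $p$ $\iff$ $f\widearc{\otimes}1-1\widearc{\otimes}f$ vanishes on $D_X$ over $p$" at an arbitrary point $p$ of a representative, not just at the origin. This requires checking that the majoration theorem and the normalized-blow-up constructions are compatible with restriction to a neighbourhood of $p$ and with the fibre of $E_X \to \overline{X}\times\overline{X}$ over the (finite) set of points above $p$ — a routine but not entirely vacuous verification, using properness of $E_X \to \overline{X}\times\overline{X}$ to pass between boundedness upstairs and the estimate downstairs exactly as in the proof of Theorem~\ref{thm:majoration 2}. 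Once that compatibility is granted, the rest is the irreducibility/normality argument, which is immediate.
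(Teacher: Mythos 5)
Your proposal reconstructs exactly the argument the paper gives (the ``proof'' is really the discussion immediately preceding the statement): reduce the Lipschitz condition to the vanishing of $f\widearc{\otimes}1 - 1\widearc{\otimes}f$ on $D_X$, use surjectivity of $^{\tau}D_X^{\red}\to{}^{\tau}X$ to propagate ``Lipschitz at one point of $^{\tau}X$'' to vanishing near at least one point of each $^{\tau}D_X^{\red}$, and then invoke normality of $E_X$ to globalize. One imprecision is worth flagging: $D_X$ is in general a \emph{non-reduced} divisor, locally of the form $(s^{\mu(\tau)})$, so ``the zero locus of $f\widearc{\otimes}1-1\widearc{\otimes}f$ contains $D_X^{\red}$ because it contains a generic point of each irreducible component'' is a statement about supports and only yields order of vanishing $\geq 1$, not $\geq\mu(\tau)$. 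The correct use of normality is not ``$E_X$ is reduced and components are irreducible'' but rather that on a normal space the polar set of the meromorphic function $(f\widearc{\otimes}1-1\widearc{\otimes}f)/s$ is a divisor (pure codimension one, by Serre's criterion), contained in $D_X^{\red}$; knowing it avoids one point of each $^{\tau}D_X^{\red}$ then forces it to be empty, equivalently $v_{\tau}(f\widearc{\otimes}1-1\widearc{\otimes}f)\geq\mu(\tau)$ for every $\tau$. Your localization concern in the last paragraph is legitimate and does need the globalized sheaf $\widetilde{\mathcal{O}}_X$ and the properness of $E_X\to\overline{X}\times\overline{X}$, as the paper indicates after Corollary~\ref{cor:Corollary0}.
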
 

To give a first  (very rough) idea of the shape of the $^{\tau} {D}_{{X}}^{\red}$, let us look at their images in the space $ \widehat { {E}}_{X} $ defined by blowing-up the ideal $  {I}_{A} $ in $\overline{{X}} \times \overline{{X}}$. The space  ${E}_{X}$  that we are interested in is the normalization of $ \widehat { E}_{X} $. But $ \widehat { E}_{X} $ has a simpler geometric interpretation: it is the closure in $\overline{{X}} \times \overline{{X}} \times \mathbf{P}^{{N-1}}$ of the graph  $ \Gamma$ of the map
$$
(\overline{X} \times \overline{X}-\overline{X}  \underset{X}{\times} \overline{X}) \longrightarrow \mathbf{P}^{N-1}
$$
which maps  each pair $(\overline{x} , \overline{x}')$ outside of the diagonal to the line defined, in homogeneous coordinates, by: 
$$ (z_{1}-z'_{1} : z_{2}-z'_{2} : ... : z_{N}-z'_{N})\ ,$$
where $(z_{1},z_{2},\ldots,z_{N})$ denotes a system of generators of the maximal ideal of ${\mathcal O}_{X,x}$.

We will denote by  $\hat{z}   \colon  \widehat { E}_{X} \rightarrow \mathbf{P}^{{N-1}}$ 
the  underlying morphism  and  by $  \hat{z}'   \colon  \widehat {D}_{X} \rightarrow {\mathbf P}^{N-1}$ the restriction of $\hat{z}$ over $\overline{{X}} \underset{X}{\times} \overline{{X}}$ (these morphisms depend on the choice of the generators $(z_{1},z_{2},\ldots,z_{N})$). The fiber  $\widehat{ D}_{X}(x)$  of the exceptional divisor $\widehat{D}_{X}$ over a point ${x} \in {X}$ is the disjoint sum of a finite number of algebraic varieties (as many as $\overline{{X}}  \underset{X}{\times} \overline{{X}}$ has points over x) that are embedded in $\mathbf{P}^{{N-1}}$ by the map $ \hat{z}'{ | \widehat { D}_{X}(x)}$.   In particular, if $x$ is a smooth point, $\widehat{ D}_{X}(x)$ is nothing but the projective space $\mathbf{P}^{n-1}$ associated with the  tangent space  to $X$ at $x$.

By composition with the finite morphisms ${E}_{X}  \rightarrow \widehat{ E}_{X}$ (normalization) and  ${D}_{X} \rightarrow \widehat{ D}_{X}$, we deduce from $\hat{z}$ and $\hat{z}'$ two morphisms 
$$ \tilde{{z}} \colon {E}_{X} \rightarrow \mathbf{P}^{N-1}$$
$$ \tilde{{z}}' = \tilde{{z}}{ | \ {D}_{X}} \colon  {D}_{X} \rightarrow \mathbf{P}^{N-1}$$
where $\tilde{{z}}'$ has the following property: its restriction to the  fiber ${D}_{X}(x)$ of ${D}_{X}$ over ${x} \in {X}$ is a finite morphism.

\begin{corollary}
If $ X \subset \mathbf{C}^{N}$ is of pure dimension $n$, the  confluence loci  ${}^{\tau}X$ are of dimension at least equal to $2n-N$.
\end{corollary}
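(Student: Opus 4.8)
The plan is to bound $\dim {}^\tau X$ from below by comparing the dimension of the component ${}^\tau D_X^{\red}$, computed along the two morphisms through which $D_X$ is related to $\mathbf{P}^{N-1}$ (via $\tilde{z}'$) and to $X$ (via the canonical map $D_X\to\widetilde{X}\to X$). The point is that $\tilde{z}'$ keeps the fibres over $X$ small, while $D_X$ itself is large (a hypersurface in the $2n$-dimensional space $E_X$), so the image in $X$ cannot be too small.

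First I would determine $\dim D_X$. Since $X$ has pure dimension $n$ and the normalization $\overline{X}\to X$ is finite, $\overline{X}$ has pure dimension $n$, hence $\overline{X}\times\overline{X}$ has pure dimension $2n$; moreover $I_A$ is non-zero on every irreducible component of $\overline{X}\times\overline{X}$, because $\overline{X}\underset{X}{\times}\overline{X}$ is a proper analytic subset (over a smooth point of $X$ its fibre is just the diagonal). Consequently the normalized blow-up $E_X$ still has pure dimension $2n$, and $D_X=V(I_A\mathcal{O}_{E_X})$ is, locally on $E_X$, the zero set of a single function, the ideal $I_A\mathcal{O}_{E_X}$ being invertible. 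By Krull's principal ideal theorem every irreducible component of $D_X^{\red}$, in particular each ${}^\tau D_X^{\red}$, has dimension at least $2n-1$.

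Then I would invoke the property recalled just before the corollary: $\tilde{z}'$ restricts to a finite morphism on each fibre $D_X(x)$, $x\in X$. Since a finite morphism preserves dimension and its target $\mathbf{P}^{N-1}$ has dimension $N-1$, we get $\dim D_X(x)\le N-1$ for every $x$. Now apply the theorem on the dimension of fibres of a morphism to the surjection $p\colon {}^\tau D_X^{\red}\to {}^\tau X$ induced by the canonical map $D_X\to X$: it is surjective by the very definition of the confluence locus ${}^\tau X$, and ${}^\tau X$ is irreducible, being the image of the irreducible ${}^\tau D_X^{\red}$. Each fibre of $p$, over a point $x\in{}^\tau X$, is a closed subset of $D_X(x)$, hence of dimension $\le N-1$; since for a surjective morphism of irreducible analytic spaces the source has dimension at most the dimension of the target plus the supremum of the fibre dimensions, we obtain
$$2n-1\ \le\ \dim {}^\tau D_X^{\red}\ \le\ \dim {}^\tau X + (N-1),$$
whence $\dim {}^\tau X\ge 2n-N$.

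The only slightly delicate point is the purity assertion used above, namely that every component of $D_X^{\red}$ has dimension $\ge 2n-1$: one has to be sure that neither the blow-up nor the normalization introduces components of too small dimension and that the exceptional divisor really is of codimension one in $E_X$. But this is exactly what the Preliminaries provide (the blow-up is bimeromorphic and $I_A\mathcal{O}_{E_X}$ is invertible), combined with Krull's theorem, so it is not a genuine obstacle; everything else is a routine application of the fibre-dimension theorem.
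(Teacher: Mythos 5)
Your argument is correct and follows the same route as the paper: bound $\dim D_X(x)$ by $N-1$ via the finiteness of $\tilde z'$ on fibres, note that each ${}^\tau D_X^{\red}$ has dimension $2n-1$, and apply the fibre-dimension theorem to the surjection ${}^\tau D_X^{\red}\to{}^\tau X$. You merely spell out the purity statement $\dim{}^\tau D_X^{\red}=2n-1$ (via invertibility of $I_A\mathcal O_{E_X}$ and Krull), which the paper states without comment.
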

\begin{proof} According to the finiteness of the   above morphism, $\dim{{D}_{X}(x)} \leqslant N-1$,  so each irreducible component  $^{\tau} {D}_{{X}}^{\red}$ of ${D}_{X}$ will have an image $^{\tau} {{X}}$ in $X$ of dimension:
$$
\dim {}^{\tau}X \geq \dim {}^{\tau} D_{X}^{\red}-(N-1) = (2n-1)-(N-1) = 2n-N .
$$
\end{proof}
\noindent
{\bf The special case of hypersurfaces.}  In this case, $ N = n+1$, so the confluence loci are of dimension at least equal to $n - 1$. The only non-trivial confluence loci are the codimension $1$ components of the singular locus of $X$. Furthermore, the  fibres $^{\tau}{D}_{X}(x)$ of the non-trivial  $^{\tau}{D}_{X}$  are sent onto $\mathbf P^{N-1}$ by finite morphisms (which are surjective by a dimension argument).
 In the special case of hypersurfaces, Theorem  \ref{theorem 1} is thus formulated as follows:
\begin{theorem}  \label{theorem 1H} 
A meromorphic function on a complex analytic hypersurface $X$ is locally Lipschitz at every point if only it is   locally Lipschitz   at one point of each irreducible component (of codimension $1$) of its polar locus.  
\end{theorem}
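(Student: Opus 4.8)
The plan is to deduce this from Theorem~\ref{theorem 1} by pinning down, in the hypersurface case, which analytic germs occur as the non-trivial confluence loci ${}^{\tau}X$. As in Theorem~\ref{theorem 1} one takes $f\in\overline{A}$: a meromorphic function that is not locally bounded fails to be continuous at its poles, so the equivalence is vacuous otherwise. The forward implication is immediate, so the content is the converse one; by Theorem~\ref{theorem 1} it suffices to check that $f$ is locally Lipschitz at one point of each ${}^{\tau}X$, and the trivial types cost nothing, the corresponding ${}^{\tau}D_X$ being reduced and $f\otimes 1-1\otimes f$ vanishing on them for every $f\in\overline{A}$ (Section~\ref{section2}). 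So everything rests on the claim that the non-trivial ${}^{\tau}X$ are exactly the codimension-$1$ irreducible components of the singular locus $\Sigma$ of $X$ (called the polar locus in the statement).

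I would first prove the inclusion of the non-trivial ${}^{\tau}X$ among such components, as this is all the equivalence requires. For a non-trivial type, ${}^{\tau}X\subseteq\Sigma$ by the description of non-trivial confluence loci recalled in Section~\ref{section2} (equivalently: over a smooth point $x$ the fibre of the exceptional divisor is the projectivised tangent space $\mathbf{P}^{n-1}$, which carries only the trivial type). The dimension corollary above gives $\dim{}^{\tau}X\ge 2n-N=n-1$, since $N=n+1$ for a hypersurface; on the other hand $\Sigma$ contains no irreducible component of the reduced, pure-dimensional germ $X$, so $\dim\Sigma\le n-1$. Hence ${}^{\tau}X$ is an irreducible germ of dimension exactly $n-1$ contained in $\Sigma$, so it coincides with one of the codimension-$1$ components of $\Sigma$. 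Feeding this back into the reduction above finishes the converse implication: since ${}^{\tau}X$ is such a component, the hypothesis supplies a point of ${}^{\tau}X$ at which $f$ is locally Lipschitz.

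The reverse inclusion --- that every codimension-$1$ component $W$ of $\Sigma$ really does arise as some ${}^{\tau}X$, which makes the statement sharp but is not logically needed --- I would get by a local analysis at a general point $p$ of $W$: transversally to $W$ the germ $(X,p)$ is a reduced plane-curve germ that is genuinely singular, and such a germ carries a non-trivial pair of infinitely near points confluent at $p$ (two distinct branches through $p$, or a single branch whose tangent line is approached), producing a non-trivial component of $D_X$ with image $W$. I expect this direction, rather than the dimension count, to be the one requiring real work, being a genuine local study of $D_X$ and not a formal estimate. Throughout, the point to keep straight is the dictionary --- furnished by Theorem~\ref{thm:majoration 2} and Corollary~\ref{cor:CorollaryM2}, and already used to establish Theorem~\ref{theorem 1} --- between ``$f$ locally Lipschitz at $p$'' and the vanishing of $f\otimes 1-1\otimes f$ on $D_X$ near the fibre over $(p,p)$, together with the fact that, $D_X$ being a divisor in the normal space $E_X$, such vanishing on a nonempty open subset of the irreducible ${}^{\tau}D_X^{\red}$ spreads to all of ${}^{\tau}D_X^{\red}$.
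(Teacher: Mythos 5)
Your proposal follows the paper's route exactly: reduce to Theorem~\ref{theorem 1} and then identify, in the hypersurface case, the non-trivial confluence loci with codimension-$1$ components of the singular locus. The paper itself simply asserts that identification in the paragraph preceding the theorem (deriving it from the codimension bound of the Corollary to Theorem~\ref{theorem 1}), so what you supply --- the inequality $\dim{}^{\tau}X\geq 2n-N=n-1$ pitted against $\dim\Sigma\leq n-1$, forcing ${}^{\tau}X$ to be a codimension-$1$ component of $\Sigma$ --- is precisely the implicit argument made explicit, and it is correct. Your observation that the reverse inclusion (every codimension-$1$ singular component carries a non-trivial type) is not logically needed is also right, and in fact the paper does not bother to establish it here either (the surjectivity of the fibre maps onto $\mathbf{P}^{N-1}$ is mentioned but not invoked for this theorem).

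One small point to watch: you gloss ``its polar locus'' as the singular locus $\Sigma$ of $X$, whereas the ``its'' plainly refers to the meromorphic function $f$ (compare the use made of the relative version in the proof of Lemma~\ref{Lemma6}). The statement with $\Sigma$ is what you actually prove, and the stated version --- restricting to the codimension-$1$ components of the polar set of $f$, a possibly smaller collection --- requires the extra (but easy) remark that at a generic point of a singular component where $f$ is regular, $f$ lifts to a holomorphic germ on the ambient $\mathbf{C}^{N}$ and is therefore automatically Lipschitz there; so checking Lipschitzness only along the components where $f$ actually has a pole still covers all non-trivial confluence loci. Adding that sentence closes the gap without altering the structure of your argument.
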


 \begin{definition}  At a generic point of the divisor ${}^{\tau} D_{X}^{\red}$, this divisor is a smooth divisor of the smooth space ${E}_{X}$. Let $s$ be its irreducible local equation. The ideal of the non reduced divisor ${}^{\tau} {D}_{X}$ is then locally of the form (${s}^{\mu(\tau)}$), where $\mu(\tau)$ is a positive integer, the {\it multiplicity of the divisor} $^{\tau} {D}_{X}$". \index{multiplicity!of a divisor}
\end{definition}
\section{Lipschitz fractions relative to a parametrization} \label{section3}

Let $R \subset A$ be an analytic subalgebra of $A$ and  let $S$ be  the associated analytic space germ. By considering $X$ as a relative analytic space over $S$, we are going to proceed to a construction analogous to that of 
Section \ref{section1}, where the product $\bar{{X}} \times \bar{{X}} $ is replaced by the fiber product on $S$. This gives a diagram:
$$ \xymatrix{
    D_{X/S} \ \   \ar[d]   \ar@{^{(}->}[r]& \ \ E_{X/S} \ar[d] \\
    \overline{X} \underset{X}{\times}   \overline{X} \ \   \ar@{^{(}->}[r] & \ \   \overline{X} \underset{S}{\times}   \overline{X}
  }
  $$
which enables one to define the {\it algebra of Lipschitz fractions relative to S}:  \index{Lipschitz fraction}
$$
\widetilde{A}^{{R}}=\left\{{f} \in \overline{{A}} \ | \ ({f}{\widearc{\otimes}} 1-1{\widearc{\otimes}} {f}){| D_{{X} / {S}}}=0\right\},
$$
whose geometric interpretation is given by the ``relative'' analog to Theorem \ref{thm:Theorem0}:
\begin{theorem}[Relative Theorem \ref{thm:Theorem0}]
$\widetilde{A}^{{R}}$ is the set of fractions of A that satisfy a Lipschitz condition: 
$$
\left|f(x)-f\left(x^{\prime}\right)\right| \leq {C} \sup _{i}\left|z_{i}-z_{i}^{\prime}\right|
$$
 for every pair of points $(x, x')$ taken in the same fiber of ${X} / {S}$ (with the same constant ${C}$ for all fibers). 
\end{theorem}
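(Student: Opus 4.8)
The plan is to mimic the proof of Theorem \ref{thm:Theorem0} verbatim, simply replacing the absolute product $\overline{X}\times\overline{X}$ by the fibre product $\overline{X}\underset{S}{\times}\overline{X}$ throughout. First I would observe that, exactly as in the absolute case, every fraction of $A$ satisfying the stated Lipschitz inequality along the fibres of $X/S$ is in particular locally bounded on $X$, hence lies in $\overline{A}$ by Theorem \ref{thm:majoration 1}; so it costs nothing to take $\overline{A}$ as the ambient ring in the definition of $\widetilde{A}^{R}$.

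Next I would translate the Lipschitz condition into a boundedness statement on the fibre product. Let $z_{1},\dots,z_{N}$ be a system of generators of the maximal ideal of $A$. For $f\in\overline{A}$, the inequality $|f(x)-f(x')|\le C\sup_{i}|z_{i}-z'_{i}|$ for all pairs $(x,x')$ lying in a common fibre of $X/S$ says precisely that, on $\overline{X}\underset{S}{\times}\overline{X}$, the module of the function $f\widearc{\otimes}1-1\widearc{\otimes}f$ is bounded (up to a constant) by $\sup_{i}|z_{i}\widearc{\otimes}1-1\widearc{\otimes}z_{i}|$. Here I would note that the functions $z_{i}\widearc{\otimes}1-1\widearc{\otimes}z_{i}$, restricted to the fibre product over $S$, generate exactly the ideal $I_{A}^{R}:=\ker(\overline{A}\underset{S}{\widearc{\otimes}}\overline{A}\to\overline{A}\underset{A}{\otimes}\overline{A})$ cutting out the diagonal $\overline{X}\underset{X}{\times}\overline{X}$ inside $\overline{X}\underset{S}{\times}\overline{X}$: indeed the diagonal is the locus where $f\widearc\otimes1=1\widearc\otimes f$ for all $f\in\overline A$, equivalently for the generators $z_{i}$, since the $z_{i}$ generate the maximal ideal and constants are killed anyway. (One uses here that $\overline{X}\underset{X}{\times}\overline{X}$ is a closed subgerm of $\overline{X}\underset{S}{\times}\overline{X}$, which holds because $R\subset A$, so the map $X\to S$ is well-defined on germs.) Then Corollary \ref{cor:CorollaryM2}, applied to the normalized blow-up $E_{X/S}$ of the ideal $I_{A}^{R}$ in $\overline{X}\underset{S}{\times}\overline{X}$ with exceptional divisor $D_{X/S}$, gives that the boundedness condition $|f\widearc\otimes1-1\widearc\otimes f|\le C\sup_{i}|z_{i}\widearc\otimes1-1\widearc\otimes z_{i}|$ is equivalent to $f\widearc\otimes1-1\widearc\otimes f\in\overline{I_{A}^{R}}$, which by Subsection \ref{subsec:P4} (Propositions \ref{prop:preliminaries i} and \ref{prop:preliminaries ii}) is equivalent to $(f\widearc\otimes1-1\widearc\otimes f)|D_{X/S}=0$. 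This is the defining condition of $\widetilde{A}^{R}$, so the two sets coincide.

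The only point requiring a little care — and the step I would flag as the main obstacle — is the independence of the constant $C$ across the fibres. In the absolute Theorem \ref{thm:Theorem0} there is a single ambient germ and Theorem \ref{thm:majoration 1} delivers a single bound automatically. Here I must check that the equivalence "$f$ Lipschitz on each fibre with a uniform constant" $\iff$ "$\frac{|f\widearc\otimes1-1\widearc\otimes f|}{\sup_i|z_i\widearc\otimes1-1\widearc\otimes z_i|}$ bounded on a neighbourhood in $\overline{X}\underset{S}{\times}\overline{X}$" really is an equivalence and not merely a fibrewise statement; but this is exactly what the ambient majoration theorem (Theorem \ref{thm:majoration 2}, via Corollary \ref{cor:CorollaryM2}) provides, since a bound on the total space of the fibre product restricts to the same bound on every fibre, and conversely a function bounded on each fibre by $C\sup_i|\cdot|$ with $C$ uniform is bounded on the total space by $C\sup_i|\cdot|$. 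So no genuinely new analysis is needed beyond observing that the majoration theorems of Subsection \ref{subsec:P5} were stated for an arbitrary complex analytic algebra, here the algebra of the germ $\overline{X}\underset{S}{\times}\overline{X}$, and are therefore directly applicable. With that remark in place the proof is complete.
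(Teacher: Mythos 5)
The paper does not actually supply a proof of this statement: it is introduced with the words ``whose geometric interpretation is given by the `relative' analog to Theorem~1.2'' and left as a declared analogy. Your proposal is precisely the fleshing-out of that analogy: replace $\overline{X}\times\overline{X}$ by $\overline{X}\underset{S}{\times}\overline{X}$, identify the ideal $I_A^R$ of the relative diagonal as being generated by the $z_i\widearc{\otimes}1-1\widearc{\otimes}z_i$, and invoke Corollary~0.11 on the normalized blow-up $E_{X/S}$. That is exactly what the authors intend, and the rest of your argument tracks the absolute proof correctly, including the observation (which they pass over silently) that the uniformity of the constant $C$ across fibres is automatic because the majoration theorem works on the total space of the fibre product.

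The one place where the analogy is not ``verbatim'' and where your write-up is slightly too quick is the opening sentence: the claim that a fraction satisfying the fibrewise Lipschitz inequality is automatically locally bounded on $X$. In the absolute case every pair of nearby points is admissible, so the bound on differences trivially gives a bound on values; in the relative case, and especially when $R$ is a parametrization, the fibres of $X/S$ are finite, and the condition a priori only controls differences along those finitely many points. (In the degenerate limit $R=A$ the fibrewise condition is vacuous while $\widetilde{A}^R=\overline{A}$, so one cannot hope for an implication valid for all $R$.) This does not damage the proof, because $\widetilde{A}^R$ is defined in the paper as a subalgebra of $\overline{A}$ and the theorem should be read as a characterization of $\widetilde{A}^R$ \emph{inside} $\overline{A}$; but you should either drop the opening claim or restrict it and justify it, rather than present it as ``exactly as in the absolute case.''
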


Notice the inclusion $\widetilde{A} \subset \widetilde{A}^{R}$, which is evident in the geometric interpretation. Formally, this inclusion can also be deduced from the existence of a ``morphism'' from the above  relative diagram  to the absolute diagram of Section \ref{section1}:

 $$\xymatrix {   & D_X  \ar[dd]  \ \  \ar@{^{(}->}[rr] & & \ \  E_X   \ar[dd] \\
 D_{X/S}  \ar@{.>}[ru]   \ar[rd]    \ar@{^{(}-}[r]  & \ar[r] & E_{X/S}  \ar[d]  \ar@{-->}[ru]  & \\
 &  \overline{X} \underset{X}{\times}  \overline{X}  \ \ar@{^{(}->}[r]&  \  \overline{X} \underset{S}{\times}  \overline{X} \ \ar@{^{(}->}[r]& \  \overline{X} {\times}  \overline{X}
 }$$
where the dotted arrow $\xymatrix { \ar@{-->}[rr]& &}$ is defined by the universal property of the normalized blow-up (see Subsection \ref{subsec:P3}, noting that $\overline{X} {\times}  \overline{X}$ is normal).

We will now assume that $X$ is of pure dimension  $n$ and we will be interested in the case where $R$ is a {\it parametrization} \index{parametrization} of $A$, i.e.,  the regular algebra $\mathbf{C}\left \{ {z_{1}, z_{2}, ..., z_{n}} \right \}$ generated by  a system of parameters of $A$ (an $n$-uple of elements of $A$ such that the ideal generated in $A$ contains a power of the maximal ideal). In other words, $X \rightarrow S$ is a finite morphism from ${X}$ to a Euclidean space of dimension equal to that of $X$. Let ${z}=(z_{1}, z_{2}, ..., z_{N})$ be a system of generators of the maximal ideal of A, and let us consider  $n$ linear combinations of them: 
$$
\begin{array}{l}{(a z)_{1}=a_{11} z_{1}+a_{12} z_{2}+\cdots+a_{1 N} z_{N}} \\ {(a z)_{2}=a_{21} z_{1}+a_{22} z_{2}+\cdots+a_{2 N} z_{N}} \\ {(a z)_{n}=a_{n 1} z_{1}+a_{n 2} z_{2}+\cdots+a_{n N} z_{N}} \end{array}$$ $${\left(a_{i, j} \in \mathbf{C}\right)}$$

The set of the $a=(a_{ij})$ for which $\mathbf{C}\left \{ {(az)_{1}, (az)_{2}, ..., (az)_{n}} \right \}$ is a parametrization of  ${A}$ forms, obviously, a dense open  set of the space $M_{N \times n} (\mathbf C)$ of all the $N \times n$ matrices . We will say more generally that a family $\cal P$ of parametrizations is {\it generic} \index{generic!family of parametrizations} if for every system ${z}=(z_{1}, z_{2}, ..., z_{n})$ of generators of the maximal ideal of $A$, the set of matrices $a$  for which $\mathbf{C}\left \{ {(az)_{1}, (az)_{2}, ..., (az)_{n}} \right \} \in {\cal P} $ contains a dense open set of $M_{N \times n} (\mathbf C)$. 

We propose to prove the:
\begin{theorem} \label{theorem2}
For any generic  family  ${\cal P} $ of parametrizations,
$$
{\widetilde{A}=\bigcap_{R \in {\cal P}} \widetilde{A}^{R}}
$$
\end{theorem}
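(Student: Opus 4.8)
The inclusion $\widetilde{A}\subset\bigcap_{R\in\mathcal{P}}\widetilde{A}^{R}$ is the one already recorded above (obvious on the geometric side, and formally coming from the morphism of the relative diagram into the absolute one), so the plan is to prove $\bigcap_{R\in\mathcal{P}}\widetilde{A}^{R}\subset\widetilde{A}$. I would fix a minimal system $z=(z_1,\dots,z_N)$ of generators of the maximal ideal of $A$ (so $n<N$, $X$ being smooth otherwise and the statement then trivial), and, using the genericity of $\mathcal{P}$, a dense open set $\Omega\subset M_{N\times n}(\C)$ with $R_a:=\C\{(az)_1,\dots,(az)_n\}\in\mathcal{P}$ for every $a\in\Omega$; it is then enough to show $\bigcap_{a\in\Omega}\widetilde{A}^{R_a}\subset\widetilde{A}$. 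Take $f$ in the left-hand side and set $g=f\,\widearc{\otimes}\,1-1\,\widearc{\otimes}\,f$, viewed as a section of $\mathcal{O}_{E_X}$ (via $E_X\to\overline{X}\times\overline{X}$). By Theorem~\ref{theorem 1} and the discussion of Section~\ref{section2}, $f\in\widetilde{A}$ is equivalent to $g$ lying in $I_A\mathcal{O}_{E_X}$ near a generic point of each \emph{non-trivial} component ${}^{\tau}D_X^{\red}$ of $D_X$; writing locally $g=h\,s$ with $s$ a generator of the invertible ideal $I_A\mathcal{O}_{E_X}$, this says that the meromorphic function $h$ on $E_X$ (whose polar divisor is $\le D_X$) is holomorphic at a generic point of each such ${}^{\tau}D_X^{\red}$. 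Finally, by the universal property of the normalized blow-up (Subsection~\ref{subsec:P3}, applied exactly as for the dotted arrow in the diagram of Section~\ref{section3}) there is a canonical morphism $\rho_a\colon E_{X/S_a}\to E_X$ over $\overline{X}\times\overline{X}$; since $I_A\mathcal{O}_{E_X}$ is already invertible one gets $I_A\mathcal{O}_{E_{X/S_a}}=\rho_a^{*}(I_A\mathcal{O}_{E_X})$, i.e.\ $D_{X/S_a}=\rho_a^{-1}(D_X)$ as Cartier divisors, and therefore $f\in\widetilde{A}^{R_a}$ is equivalent to $\rho_a^{*}h$ being holomorphic on $E_{X/S_a}$.

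The geometric core of the proof is the claim that, as $a$ runs over $\Omega$, the sets $\rho_a(D_{X/S_a})$ meet a Zariski-dense subset of every non-trivial ${}^{\tau}D_X^{\red}$. For non-trivial $\tau$ the confluence locus ${}^{\tau}X$ consists of singular points of $X$, hence $\dim{}^{\tau}X\le n-1$; together with $\dim{}^{\tau}D_X^{\red}=2n-1$ and the finiteness of $\tilde z'$ on the fibres ${}^{\tau}D_X(x)$ recalled in Section~\ref{section2}, this yields $\dim\bigl(\tilde z'({}^{\tau}D_X^{\red})\bigr)\ge n$ in $\mathbf{P}^{N-1}$ (and for hypersurfaces $\tilde z'({}^{\tau}D_X^{\red})=\mathbf{P}^{N-1}$, as noted before Theorem~\ref{theorem 1H}). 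On the matrix side one uses the elementary fact that a dense open set of matrices cannot avoid the kernel conditions for a dense set of directions: the linear spaces $L_{\ell}=\{a\mid a\ell=0\}$, $\ell\in\mathbf{P}^{N-1}$, already cover $M_{N\times n}(\C)$ since $n<N$ forces $\ker a\neq0$, and $\{\ell\mid L_{\ell}\cap\Omega\neq\emptyset\}$ is open — the incidence $\{(\ell,a)\mid a\ell=0\}$ is a vector bundle over $\mathbf{P}^{N-1}$ — and non-empty, hence a dense open $U\subset\mathbf{P}^{N-1}$. Since $\dim\tilde z'({}^{\tau}D_X^{\red})\ge n$, for $\ell\in U$ a general $a(\ell)\in\Omega\cap L_{\ell}$ has $\ker a(\ell)$ a general $(N-n)$-plane through $\ell$, so the locus $\tilde z^{-1}(\mathbf{P}(\ker a(\ell)))$ is, near a point $\xi$ of ${}^{\tau}D_X^{\red}$ with $\tilde z'(\xi)=\ell$ general on its $\tilde z'$-fibre, of dimension $n$ and not contained in $D_X$; hence such $\xi$ lies on the strict transform of $\overline{X}\underset{S_{a(\ell)}}{\times}\overline{X}$ in $E_X$, so $\xi\in\rho_{a(\ell)}(D_{X/S_{a(\ell)}})$. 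As $\tilde z'$ is dominant onto $\tilde z'({}^{\tau}D_X^{\red})$ and $U$ is dense, such $\xi$ are dense in ${}^{\tau}D_X^{\red}$.

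The proof is then completed by contradiction. If $f\notin\widetilde{A}$, then $h$ has a pole of some order $m\ge1$ along some non-trivial ${}^{\tau}D_X^{\red}$. Pick, among the dense set of points of the previous paragraph, one point $\xi_0$ that is moreover a generic point of ${}^{\tau}D_X^{\red}$ (lying on no other component of $D_X$ and off the zero locus of $h$), so that locally $h=v\,t^{-m}$ with $t$ a local equation of ${}^{\tau}D_X^{\red}$ and $v$ a unit; let $a_0\in\Omega$ and $\eta_0\in D_{X/S_{a_0}}$ with $\rho_{a_0}(\eta_0)=\xi_0$. Because $\rho_{a_0}(E_{X/S_{a_0}})$ is not contained in ${}^{\tau}D_X^{\red}$ (it dominates $\overline{X}\underset{S_{a_0}}{\times}\overline{X}\not\subset\overline{X}\underset{X}{\times}\overline{X}$), the function $\rho_{a_0}^{*}t$ vanishes at $\eta_0$ but not identically, while $\rho_{a_0}^{*}v(\eta_0)\neq0$; thus $\rho_{a_0}^{*}h=(\rho_{a_0}^{*}v)(\rho_{a_0}^{*}t)^{-m}$ is not holomorphic at $\eta_0$, contradicting $f\in\widetilde{A}^{R_{a_0}}$. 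Hence $f\in\widetilde{A}$.

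I expect the main obstacle to be the geometric estimate of the second paragraph — producing, inside the generic family $\mathcal{P}$, enough parametrizations whose relative exceptional divisors sweep a dense part of each non-trivial ${}^{\tau}D_X^{\red}$ — which combines the bound $\dim{}^{\tau}X\le n-1$, the finiteness properties of $\tilde z'$ from Section~\ref{section2}, and the (elementary but slightly delicate) genericity juggling with the spaces $L_{\ell}$. By contrast, once a single $a\in\Omega$ with $\xi_0\in\rho_a(D_{X/S_a})$ is in hand, the passage from ``$\rho_a^{*}h$ holomorphic'' back to ``$h$ holomorphic at $\xi_0$'' — i.e.\ the control of the multiplicities $\mu(\tau)$ — is automatic with the meromorphic-function formulation and needs no transversality bookkeeping. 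One should still check the two compatibilities used above, namely $D_{X/S_a}=\rho_a^{-1}(D_X)$ and the identification of $E_{X/S_a}\setminus D_{X/S_a}$ with an open part of the strict transform of $\overline{X}\underset{S_a}{\times}\overline{X}$ in $E_X$, but both are immediate from the constructions of Sections~\ref{section1} and \ref{section3}.
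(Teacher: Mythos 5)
Your overall strategy is the same — fix the generators, fix a dense open $\Omega$ of matrices, and show that for each non-trivial type $\tau$ there is $a\in\Omega$ whose relative construction "sees" the divisor ${}^{\tau}D_X$ at a generic point — but the two halves are handled differently. Your reformulation of the Lipschitz condition as holomorphy of the meromorphic function $h=g/s$ on $E_X$, together with the contradiction argument via $\rho_a^{*}h=(\rho_a^{*}v)(\rho_a^{*}t)^{-m}$, is a genuinely cleaner way to compare the multiplicities $\mu(\tau)$ and $\nu(\tau)$ than the paper's: you correctly observe that once $\xi_0\in\rho_a(D_{X/S_a})$ is secured, the Cartier-divisor compatibility $D_{X/S_a}=\rho_a^{-1}(D_X)$ forces $\rho_a^{*}t$ to be a non-zero-divisor near $\eta_0$, so the pole of $h$ survives pull-back with no transversality bookkeeping. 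The paper instead works on the image $E^{\ast}_{X/S}$ and needs the transversal-embedding conclusion of Lemma~\ref{Lemme2} before it can even make sense of restricting the unit $u$; so in this half your route is a bit more economical and more robust.

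The gap is in your second paragraph, i.e.\ precisely where you expected it. You build the dense open $U=\{\ell\in\mathbf{P}^{N-1}:L_\ell\cap\Omega\neq\emptyset\}$ and then pick $\ell\in U$ with $\tilde z'(\xi)=\ell$ for some $\xi\in{}^{\tau}D_X^{\red}$; this silently requires $U\cap\tilde z'({}^{\tau}D_X^{\red})\neq\emptyset$. But denseness of $U$ in $\mathbf{P}^{N-1}$ does not give this: a dimension count on the incidence variety shows $\mathbf{P}^{N-1}\setminus U=\{\ell:L_\ell\subset M\setminus\Omega\}$ is a closed set of dimension up to $N-2$, while $\tilde z'({}^{\tau}D_X^{\red})$ is only guaranteed to have dimension $\ge n$; when the codimension of $X$ is $\ge 2$ (so $N\ge n+2$), the image may be entirely contained in the bad locus. (For hypersurfaces, $N-2=n-1<n$ and your count goes through; but the theorem is stated and proved in the paper for arbitrary codimension.) The paper avoids this trap by parametrizing directly by the matrix $a$ — i.e.\ by the $(N-n-1)$-plane $\mathbf{P}^{N-n-1}(a)$ in the Grassmannian — rather than by the point $\ell$: Lemma~\ref{Lemme5} gives a dense open of the Grassmannian of planes that are effectively transversal to the $(\ge n)$-dimensional image, and this pulls back to a dense open of matrices which necessarily meets $\Omega$. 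Also, the passage from transversality at a point $w_0$ of the fibre ${}^{\tau}D_X^{\red}(0)$ over the origin — where finiteness of $\tilde z'$ actually holds — to transversality at a nearby point of the open set ${}^{\tau}\Delta$ you want to hit is what the Whitney $(a)$-stratification argument (Lemmas~\ref{Lemme3} and \ref{Lemme4} and the Appendix) is for; your phrase ``$\ell$ general on its $\tilde z'$-fibre'' does similar work but without justification that the resulting $\xi$'s sweep a dense subset of ${}^{\tau}D_X^{\red}$. You should replace the $L_\ell$-covering argument by the Grassmannian statement and import (or reprove) the stratification step; the rest of your proof then goes through and is, I think, slightly simpler than the original.
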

It follows from this theorem that the following two questions admit identical answers: 
\vskip0,2cm
\noindent
 {\bf Question 2.} Is  the equality $\widetilde{A}= \widetilde{A}^{R}$   generically true (i.e., for a generic family  $R$ of parametrizations)? 
\vskip0,2cm
\noindent
 {\bf Question 2'.} Is $\widetilde{A}^{R}$  generically independent of $R$? 
\vskip0,2cm
We will see that at least in the case of hypersurfaces the answer to these two questions is yes.

\begin{proof}[of Theorem \ref{theorem2}]  We have already seen that  $\widetilde{A} \subset \widetilde{A}^{R}$ for every R. Conversely, consider a function $f \in \bigcap_{R \in {\cal P}} \widetilde{A}^{R}$; does it belong to $\widetilde{A}$? 

Let us consider  the family of irreducible divisors in ${E}_{X}$ consisting of the $^{\tau} {D}_{{X}}^{\red}$ and of  the irreducible components of $\{{f}{\widearc{\otimes}} 1-1 {\widearc{\otimes}} {f}=0\}$. Let us denote by  ${^\tau}{\Delta_{f}}$  the set of    points of $^{\tau} {D}_{{X}}^{\red}$  which:
\begin{enumerate}
    \item do not belong to any other irreducible divisor of the family;
    \item are  smooth points of $^{\tau} {D}_{{X}}^{\red}$ and  of ${E}_{X}$.
\end{enumerate}

Since ${E}_{X}$ is normal, hence non-singular in codimension 1, ${^\tau}{\Delta_{f}}$ is a Zariski dense open set of  $^{\tau} {D}_{{X}}^{\red}$. At every point $w \in {^{\tau} {D}_{{X}}^{\red}}$, the local ideal of $^{\tau} {D}_{{X}}$ in ${E}_{X}$ is of the form $\left({s}^{\mu(\tau)}\right)$, where $s$ is a coordinate function of a local chart of ${E}_{X}$, and $\mu(\tau)$ an integer $\geq 1$ (the multiplicity  of the divisor ${^{\tau} {D}_{{X}}}$). Moreover, the function ${f} \widearc{\otimes} 1-1 \widearc{\otimes} {f}$ is of the form $us^{\nu(\tau)}$, where $u$ is a unit of the local ring of ${E}_{X}$ at the point $w$ and $\nu(\tau)$ is an integer $ \geq 0$.

Then,  it remains to prove that $v(\tau) \geq \mu(\tau)$ for every $\tau$ (see Section \ref{section2}). 

Let $S$ be the germ associated with a parametrization $R \in {\cal P}$ and let us denote by ${E}_{{X} / {S}}^{*}$ (resp. ${D}_{{X} / {S}}^{*}$)  the image of ${E}_{{X} / {S}}$ (resp. ${D}_{{X} / {S}}$)  in ${E}_{X}$  by the canonical map  ${E}_{{X} / {S}}   \xymatrix { \ar@{-->}[rr]& &} {E}_{X}$  defined at the beginning of the section. By definition, ${D}_{{X} / {S}}^{*} =  {E}_{{X} / {S}}^{*} \cap {D}_{X}$, so that if ${E}_{{X} / {S}}^{*}$ contains a point $w \in {^\tau}{\Delta_{f}}$, the divisor ${D}_{{X} / {S}}^{*}$ will be given in ${E}_{{X} / {S}}^{*}$, in a neighbourhood of this point, by the ideal $(s^{\mu(\tau)})$. If this ideal is not zero, i.e., if ${E}_{{X} / {S}}^{*}$ is not included in ${D}_{X}$, the relative Lipschitz condition:
$$({f} {\widearc{\otimes}} 1-1 {\widearc{\otimes}} {f} ){ | \ {D}_{X/S}} =0$$
implies that the function $({f} {\widearc{\otimes}} 1-1 {\widearc{\otimes}} {f}){| \ {E}_{{X} / {S}}^{*}}$ is divisible by $s^{\mu(\tau)}$ in a neighbourhood of $w$. 

By writing  ${f} {\widearc{\otimes}} 1-1 {\widearc{\otimes}} {f} = us^{\nu(\tau)}$ and  by remarking that $u$,  which is a unit of ${E}_{{X}}$, remains a unit after restriction to ${E}_{{X} / {S}}^{*}$, we deduce from this  that  $\nu(\tau) \geq \mu(\tau)$. \\

On the way, we had to admit that there exists an $R \in {\cal P}$ such that, for every non-trivial type $ \tau$, ${E}_{{X} / {S}}^{*}$ meets ${^\tau}{\Delta_{f}}$ and is not  locally included in ${^\tau}{\Delta_{f}}$. To make sure of this, and thus to complete the proof of Theorem \ref{theorem2}, it suffices to prove: 
\begin{lemma} \label{Lemme1}
For every Zariski  dense  open set ${^\tau}\Delta \subset {^{\tau} {D}_{{X}}}^{\red}$ ($\tau$ non-trivial) consisting of  smooth points of  $\ ^{\tau} {D}_{{X}}^{\red}$ which are  also  smooth points of  ${E}_{X}$, there exists a generic family of parametrizations $R$ for which the map ${E}_{{X} / {S}} \rightarrow {E}_{X}$ intersects ${^\tau}\Delta$ in at least one point $w$ and is an embedding transversal to ${^\tau}\Delta$ at this point.
\end{lemma}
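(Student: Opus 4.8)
The plan is to fix a system of generators $z=(z_1,\dots,z_N)$ of the maximal ideal of $A$ and to produce a single matrix $a\in M_{N\times n}(\mathbf C)$ for which $R=\mathbf C\{(az)_1,\dots,(az)_n\}$ is a parametrization and the resulting map $E_{X/S}\to E_X$ meets ${}^{\tau}\Delta$ and is a transversal embedding there. Since ``$R$ is a parametrization'' holds on a dense open subset of $M_{N\times n}(\mathbf C)$, and the remaining conditions are cut out by the non-vanishing of finitely many analytic functions of the entries $a_{ij}$, the set of good $a$ is constructible; being nonempty in the irreducible space $M_{N\times n}(\mathbf C)$, it then contains a dense open subset, and carrying this out for every choice of $z$ produces a generic family of parametrizations in the sense of the text. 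So it suffices to exhibit one good $a$.

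First I would describe $E_{X/S}$ inside $E_X$ near a generic point of ${}^{\tau}D_X^{\red}$. The morphism $E_{X/S}\to E_X$ is the one furnished by the universal property of the normalized blow-up (\S\ref{subsec:P3}: $E_{X/S}$ is normal, $I_A\mathcal O_{E_{X/S}}$ is invertible, $\overline X\times\overline X$ is normal); write $E_{X/S}^{*}\subset E_X$ for its image. A pair $(\overline x,\overline x')$ lies in $\overline X\underset{S}{\times}\overline X$ exactly when $z(\overline x)-z(\overline x')$ lies in $\ker\,(a\colon\mathbf C^N\to\mathbf C^n)$, so over $E_X\setminus D_X$, where $E_X\to\overline X\times\overline X$ is an isomorphism, $E_{X/S}^{*}$ is the closure of $\tilde z^{-1}(\mathbf P(\ker a))\setminus D_X$, $\mathbf P(\ker a)\subset\mathbf P^{N-1}$ being the projectivized kernel of $a$. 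The same holds along $D_X$: at a point $w$ where $E_X$ and ${}^{\tau}D_X^{\red}$ are smooth, if $s$ is a local equation of ${}^{\tau}D_X^{\red}$ then $z_j\widearc\otimes1-1\widearc\otimes z_j=s^{\mu(\tau)}\gamma_j$ with $(\gamma_1:\dots:\gamma_N)=\tilde z$ and the $\gamma_j$ without common zero, whence $(az)_i\widearc\otimes1-1\widearc\otimes(az)_i=s^{\mu(\tau)}\sum_j a_{ij}\gamma_j$, so that the non-exceptional part of the pullback of $\overline X\underset{S}{\times}\overline X$ is cut out near $w$ by the equations $\sum_j a_{ij}\gamma_j=0$ ($i=1,\dots,n$), i.e.\ it is $\tilde z^{-1}(\mathbf P(\ker a))$. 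Hence, as soon as $\tilde z^{-1}(\mathbf P(\ker a))$ is smooth of dimension $n$ at $w$ and transversal there to ${}^{\tau}D_X^{\red}$, it is not contained in $D_X$ near $w$, so $w\in E_{X/S}^{*}$; and at such a generic $w$ the map $E_{X/S}\to E_X$ is finite and birational onto $E_{X/S}^{*}$, whose germ at $w$ is then the smooth $n$-fold $\tilde z^{-1}(\mathbf P(\ker a))$, so $E_{X/S}\to E_X$ is a closed immersion near the fibre over $w$ --- precisely the desired transversal embedding. (Ruling out spurious lower-dimensional branches of $E_{X/S}^{*}$ through $w$, and non-finiteness over $w$, uses only $\dim E_{X/S}=\dim(\overline X\underset{S}{\times}\overline X)=n$ together with $\dim{}^{\tau}D_X^{\red}=2n-1>n-1$, so a generic $w$ avoids these loci.)

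It thus remains to choose $a$ so that, for some $w\in{}^{\tau}\Delta$, the linear slice $\tilde z^{-1}(\mathbf P(\ker a))$ is smooth of dimension $n$ at $w$ and transversal to ${}^{\tau}D_X^{\red}$. The numerical input is that, $\tau$ being non-trivial, ${}^{\tau}X$ lies in the singular locus of the reduced pure-dimensional $X$, so $\dim{}^{\tau}X\leq n-1$, whence the fibres ${}^{\tau}D_X(x)$ have dimension $\geq(2n-1)-(n-1)=n$, while $\dim{}^{\tau}D_X(x)\leq N-1$ by the finiteness of the morphisms $\tilde z'$ on these fibres recalled in Section~\ref{section2}; in particular $Y_{\tau}:=\overline{\tilde z'({}^{\tau}D_X^{\red})}\subset\mathbf P^{N-1}$ has dimension $\geq n$. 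I would then run an incidence argument on
\[
\mathcal I=\{(a,w)\in M_{N\times n}(\mathbf C)\times{}^{\tau}D_X^{\red}\mid \tilde z'(w)\in\mathbf P(\ker a)\}.
\]
The projection of $\mathcal I$ to ${}^{\tau}D_X^{\red}$ has the linear fibre $\{a\mid a\cdot\tilde z'(w)=0\}$ over $w$, so $\mathcal I$ is irreducible; and since $\dim Y_{\tau}\geq n$, a generic $(N-n-1)$-plane meets $Y_{\tau}$, so the projection $\mathcal I\to M_{N\times n}(\mathbf C)$ is dominant. Hence a generic $(a,w)\in\mathcal I$ has $w$ generic in ${}^{\tau}D_X^{\red}$ (so $w\in{}^{\tau}\Delta$, with $E_X$ and ${}^{\tau}D_X^{\red}$ smooth at $w$ and $\tilde z'$ unramified on ${}^{\tau}D_X(x)$ at $w$), and then $a$ generic in the linear fibre $\{a\cdot\tilde z'(w)=0\}$, which makes $T_{[\eta]}\mathbf P(\ker a)$ a generic codimension-$n$ subspace of $V:=T_{[\eta]}\mathbf P^{N-1}$, where $[\eta]:=\tilde z'(w)$. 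Writing $W=d\tilde z_w(T_wE_X)$ and $W'=d\tilde z'_w(T_w{}^{\tau}D_X^{\red})\subseteq W$ (so that $\dim W\geq\dim W'\geq n$ at generic $w$, the last inequality coming from the unramifiedness on ${}^{\tau}D_X(x)$), the two conditions at $w$ translate, through the differential of $\tilde z$, into the requirement that a generic codimension-$n$ subspace of $V$ meet $W$ and $W'$ in the minimal dimensions --- which, since $\dim W\geq\dim W'\geq n$, is an elementary linear-algebra fact. This makes the generic point of $\mathcal I$ good, so the good matrices are dense, and intersecting with the dense open set on which $R$ is a parametrization finishes the proof.

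The step I expect to be the main obstacle is the geometric one: identifying $E_{X/S}^{*}$, near generic points of ${}^{\tau}D_X^{\red}$, with the linear slice $\tilde z^{-1}(\mathbf P(\ker a))$, and certifying that such a $w$ genuinely lies on $E_{X/S}^{*}$ (rather than only on the ambient slice) and that $E_{X/S}\to E_X$ is an isomorphism onto a neighbourhood there. This rests on the explicit local form of the blow-up equations above together with the dimension estimates; once it is secured, the rest is a coordinated but routine genericity computation --- a Bertini-type slicing of $E_X$ by linear subspaces, set up so that the constraint ``$\mathbf P(\ker a)$ contains $\tilde z'(w)$'' is absorbed by letting $w$ vary, and then reduced via generic smoothness to the transitivity of $GL_N(\mathbf C)$ on the linear subspaces through a fixed point.
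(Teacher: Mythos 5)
Your identification of the image $E_{X/S}^{*}$ with the linear slice $\tilde z^{-1}(\mathbf P(\ker a))$ near a generic point of ${}^{\tau}D_{X}^{\red}$, via the explicit blow-up equations, and the consequent reduction of the problem to an effective transversality statement for ${}^{\tau}\tilde z$ against $\mathbf P^{N-n-1}(a)$, is correct and is essentially the paper's Lemma~\ref{Lemme2}. The difficulty lies in your replacement of Lemmas~\ref{Lemme3}--\ref{Lemme5} by a single incidence-variety argument; there is a genuine gap there, precisely at the point where the paper inserts a Whitney $(a)$-stratification.

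The lemma requires the transversal point $w$ to lie in the \emph{arbitrary} given dense open set ${}^{\tau}\Delta$. The only reason a generic $\mathbf P^{N-n-1}$ meets $Y_{\tau}$ (hence that $\mathcal I\to M$ is dominant) is, as you yourself observe, that $Y_{\tau}$ contains the image of the fibre ${}^{\tau}D_{X}^{\red}(0)$ over the origin: it is on that fibre that $\tilde z'$ is finite, so that its image is a compact, hence \emph{algebraic}, subvariety of $\mathbf P^{N-1}$ of dimension $\geq n$. The germ ${}^{\tau}D_{X}^{\red}$ itself is only analytic, and so is the full image $\tilde z'({}^{\tau}D_{X}^{\red})$; Bertini-type incidence reasoning does not apply to it. But for $n\geq 2$ the fibre ${}^{\tau}D_{X}^{\red}(0)$ is a \emph{proper} closed analytic subset of the $(2n-1)$-dimensional ${}^{\tau}D_{X}^{\red}$, so nothing prevents the given ${}^{\tau}\Delta$ from missing it entirely (take ${}^{\tau}\Delta={}^{\tau}D_{X}^{\red}\setminus {}^{\tau}D_{X}^{\red}(0)$). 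The transversal intersection point your argument produces a priori sits on that fibre, and you give no reason why, for a dense open set of $a$, the $(n-1)$-dimensional locus $\tilde z^{-1}(\mathbf P^{N-n-1}(a))\cap {}^{\tau}D_{X}^{\red}$ should escape the $n$-dimensional fibre over $0$ and reach ${}^{\tau}\Delta$. The assertion ``a generic $(a,w)\in\mathcal I$ has $w$ generic in ${}^{\tau}D_{X}^{\red}$ and $a$ generic in $M$'' does not follow from the dimension count: $\mathcal I$ is the product of an analytic germ with the algebraic space $M$, the dominance of $\mathcal I\to M$ you establish is seen only through the algebraic fibre over $0$, and Chevalley-type ``image of a dense open is dense open'' reasoning, valid for an algebraic correspondence, is not available here.

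The paper closes exactly this gap: it stratifies ${}^{\tau}D_{X}^{\red}$ with Whitney $(a)$-conditions so that both ${}^{\tau}D_{X}^{\red}(0)$ and the complement of ${}^{\tau}\Delta$ are unions of strata, applies the algebraic Bertini statement (Lemma~\ref{Lemme5}) on a maximal stratum $W_{0}\subset{}^{\tau}D_{X}^{\red}(0)$ (Lemma~\ref{Lemme4}), and then uses Proposition~\ref{proposition finale} of the appendix --- a dimension argument relying on Whitney $(a)$ --- to propagate \emph{effective} transversality from $W_{0}$ to the maximal stratum $W\subset{}^{\tau}\Delta$. That propagation step is what your proof needs and does not supply.
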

(The condition of ``transversal embedding'' is obviously stronger than what we asked, but will be more manageable). \\

Let ${z}=(z_{1}, z_{2}, ..., z_{n})$ be a system of generators of the maximal ideal of ${\mathcal O}_{X,0}$, and denote by $ {^\tau} \tilde{z} : {^ \tau} {D}_{X}^{\red} \rightarrow \mathbf{P}^{N-1} $ the restriction of the morphism $ \tilde{z} : {E}_{X} \rightarrow \mathbf{P}^{N-1} $ of Section \ref{section2}. To every parametrization $R(a) = \mathbf{C}\left \{ {(az)_{1}, (az)_{2}, ..., (az)_{n}} \right \}$ defined by a matrix $a \in M_{N \times n} ({\mathbf C})$,  let us associate the $(N-n-1)$-plane $\mathbf{P}^{N-n-1}(a) \subset \mathbf{P}^{N-1}$ defined as the  projective  subspace associated to the kernel of the matrix $a$.

\begin{lemma} \label{Lemme2} If the map $ {^\tau} \tilde{z} : {^ \tau} \Delta \rightarrow \mathbf{P}^{N-1} $ is {\it effectively transversal} \footnote{The sentence: {\it the  map is transversal at $w$}  \index{transversal!map} expresses one of the following two possible cases: 
\begin{enumerate}
    \item $w$ sends itself outside of the subvariety into consideration;
    \item $w$ is sent into the subvariety  into consideration, and the image of the  tangent  map  to the point $w$ is a vector subspace transversal to the tangent space of this subvariety.
\end{enumerate} 
In the second case, we will say that the  map is {\it effectively transversal in $w$}. \index{transversal!effectively}} to $\mathbf{P}^{N-1}(a)$ at the point  $ w \in {^ \tau} \Delta $, then   the map $ {E} _ {X/S(a)} \rightarrow {E}_{X} $  is an embedding effectively transversal to ${^ \tau} \Delta$  at this point.  \end{lemma}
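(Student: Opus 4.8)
The plan is to reduce everything to a local computation at $w$ and to identify, near $w$, the image $E_{X/S(a)}^{*}$ of $E_{X/S(a)}$ in $E_{X}$ with $\tilde z^{-1}(\mathbf{P}^{N-n-1}(a))$, the preimage under $\tilde z$ of the projective subspace attached to $\ker a$; the transversality assertion is then a purely linear translation of the hypothesis. Since $w$ is a smooth point of $E_{X}$ and of ${}^{\tau}D_{X}^{\red}$, choose near $w$ a holomorphic reduced local equation $s$ of ${}^{\tau}D_{X}^{\red}$ in $E_{X}$; by the definition of the multiplicity $\mu(\tau)$, $I_{A}\mathcal{O}_{E_{X}}=(s^{\mu(\tau)})$ near $w$. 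Writing $\zeta_{k}:=z_{k}\otimes 1-1\otimes z_{k}$, we get $\zeta_{k}=s^{\mu(\tau)}u_{k}$ with the $u_{k}$ holomorphic near $w$ and $(u_{1}(w),\dots,u_{N}(w))\neq 0$; reindexing so that $u_{1}(w)\neq 0$, the morphism $\tilde z\colon E_{X}\to\mathbf{P}^{N-1}$ is $[u_{1}:\cdots:u_{N}]$ near $w$, with values in the chart $\{\xi_{1}\neq 0\}$, in which $\mathbf{P}^{N-n-1}(a)$ is the common zero set of the affine forms $\ell_{i}(\xi)=a_{i1}+\sum_{k\geq 2}a_{ik}\xi_{k}$, $i=1,\dots,n$. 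Put $\bar v_{i}:=\ell_{i}\circ\tilde z=a_{i1}+\sum_{k\geq 2}a_{ik}(u_{k}/u_{1})$, so that near $w$, with $u_{1}$ a unit, $(az)_{i}\otimes 1-1\otimes(az)_{i}=\sum_{k}a_{ik}\zeta_{k}=s^{\mu(\tau)}u_{1}\bar v_{i}$.

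Next I translate the hypothesis. Read in the chart $\{\xi_{1}\neq 0\}$, the effective transversality of ${}^{\tau}\tilde z\colon{}^{\tau}\Delta\to\mathbf{P}^{N-1}$ to $\mathbf{P}^{N-n-1}(a)$ at $w$ says that $\bar v_{i}(w)=0$ for all $i$ (so $w\in\tilde z^{-1}(\mathbf{P}^{N-n-1}(a))$) and that $d\bar v_{1,w},\dots,d\bar v_{n,w}$ restrict to linearly independent forms on $T_{w}{}^{\tau}\Delta=T_{w}{}^{\tau}D_{X}^{\red}=\ker(ds_{w})$; equivalently, $ds_{w},d\bar v_{1,w},\dots,d\bar v_{n,w}$ are linearly independent in $T^{*}_{w}E_{X}$. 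Consequently $\tilde z^{-1}(\mathbf{P}^{N-n-1}(a))=\{\bar v_{1}=\cdots=\bar v_{n}=0\}$ is, near $w$, a smooth irreducible germ of dimension $n$, not contained in ${}^{\tau}D_{X}^{\red}$, and transversal to ${}^{\tau}D_{X}^{\red}$ at $w$.

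For the core step, recall that $\overline{X}{\times}\overline{X}$ is normal and that $E_{X/S(a)}$ is the normalized blow-up of the ideal of $\overline{X}\underset{X}{\times}\overline{X}$ in $\overline{X}\underset{S}{\times}\overline{X}$, namely $I_{A}\mathcal{O}_{\overline{X}\underset{S}{\times}\overline{X}}$; hence $I_{A}\mathcal{O}_{E_{X/S(a)}}$ is invertible and, by the universal property of the normalized blow-up (Subsection \ref{subsec:P3}), the canonical map $E_{X/S(a)}\to E_{X}$ is the factorization of $E_{X/S(a)}\to\overline{X}\underset{S}{\times}\overline{X}\hookrightarrow\overline{X}{\times}\overline{X}$. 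It is proper, and under the isomorphism $E_{X}\setminus D_{X}\cong\overline{X}{\times}\overline{X}\setminus\overline{X}\underset{X}{\times}\overline{X}$ (a normalized blow-up is an isomorphism over the complement of its centre) it sends $E_{X/S(a)}\setminus D_{X/S(a)}$ onto $\overline{X}\underset{S}{\times}\overline{X}\setminus\overline{X}\underset{X}{\times}\overline{X}=\{(az)_{i}\otimes 1-1\otimes(az)_{i}=0,\ i=1,\dots,n\}\setminus D_{X}$, which near $w$ equals $\{\bar v_{1}=\cdots=\bar v_{n}=0\}\setminus D_{X}$. Since the map is proper, $E_{X/S(a)}^{*}$ is closed; as $\{\bar v_{1}=\cdots=\bar v_{n}=0\}$ is irreducible and not contained in $D_{X}$ near $w$, we conclude $E_{X/S(a)}^{*}=\tilde z^{-1}(\mathbf{P}^{N-n-1}(a))$ near $w$, so $E_{X/S(a)}^{*}$ is smooth near $w$ and transversal to ${}^{\tau}\Delta$ there. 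Finally, $E_{X/S(a)}\to E_{X/S(a)}^{*}$ is proper, birational and quasi-finite near $w$ (a positive-dimensional fibre would be a compact analytic subset of a small domain) onto the smooth, hence normal, germ $E_{X/S(a)}^{*}$, so by Zariski's Main Theorem it is an isomorphism near the unique point over $w$; composing with the closed immersion $E_{X/S(a)}^{*}\hookrightarrow E_{X}$ of smooth germs shows that $E_{X/S(a)}\to E_{X}$ is, near that point, an embedding with image $\tilde z^{-1}(\mathbf{P}^{N-n-1}(a))$, which is effectively transversal to ${}^{\tau}\Delta$ at $w$ by what was shown above. This proves the lemma.

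The main obstacle is this identification of $E_{X/S(a)}^{*}$: one has to unwind the relative normalized blow-up together with its canonical morphism to $E_{X}$ and check that, over the generic smooth part ${}^{\tau}\Delta$ of the component, passing from the absolute to the relative construction amounts exactly to intersecting $E_{X}$ with $\tilde z^{-1}(\mathbf{P}^{N-n-1}(a))$ — the factoring out of $s^{\mu(\tau)}$ being precisely where the multiplicity $\mu(\tau)$ drops out. Once the local equations $\bar v_{i}$ are available, the equivalence of the two transversality conditions is elementary linear algebra and the passage from $E_{X/S(a)}^{*}$ back to $E_{X/S(a)}$ is a routine use of Zariski's Main Theorem.
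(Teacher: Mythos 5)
Your proof is correct and follows essentially the same route as the paper: reduce the transversality hypothesis to a local statement at $w$, identify $E_{X/S(a)}^{*}$ with $\tilde z^{-1}(\mathbf{P}^{N-n-1}(a))$ via the commutative diagram over $\overline{X}\times\overline{X}$ by working off the exceptional divisor and taking closures, then show $E_{X/S(a)}\to E_{X/S(a)}^{*}$ is an isomorphism near $w$. Your only deviation is invoking Zariski's Main Theorem for the last step, where the paper simply observes that a bimeromorphic morphism of smooth germs of equal dimension carrying a smooth divisor onto a smooth divisor and inducing an isomorphism off those divisors must itself be an isomorphism; the two justifications amount to the same thing.
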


\begin{proof} [of Lemma \ref{Lemme2}] \ \ 
Since $ {^\tau} \tilde{z}$ is the restriction of $  \tilde{z} :  {E}_{X} \rightarrow \mathbf{P}^{N-1} $, the transversality of $ {^\tau} \tilde{z}$ implies the transversality of $ \tilde{z}$. 

Hence,  $ \tilde{z}^{-1} (\mathbf{P}^{N-n-1}(a))$ is a smooth subvariety  of ${E}_{X}$ of dimension $n$, which  intersects ${^\tau}\Delta$ transversely along the smooth subvariety  $  {}^{\tau}\tilde{z}^{-1} (\mathbf{P}^{N-n-1}(a))$ of  dimension $ n-1$. In particular,  $ \tilde{z}^{-1} (\mathbf{P}^{N-n-1}(a))$ is the closure of the complement of  $  {}^{\tau}\tilde{z}^{-1} (\mathbf{P}^{N-n-1}(a))$, i.e., the closure of its part located outside of the exceptional divisor. But outside of the exceptional divisor, the right vertical arrow of the  following commutative diagram is an isomorphism (since $\bar{{X}} \times \bar{{X}}$ is normal),  while the left  vertical arrow is surjective.

 $$\xymatrix {E_{X/S(a)}  \ \    \ar[d]    \ar[r]  &\ \ E_X \ar[d] \\
 \overline{X} \underset{S(a)}{\times}  \overline{X} \ \ \ar@{^{(}->}[r]& \ \   \overline{X} \times   \overline{X}
 }$$
 
\noindent
  Therefore, the image ${E}_{X/S(a)}^{*}$ of the upper arrow is identified with $ \overline{X} \underset{S(a)}{\times}  \overline{X}$, i.e., with $ \tilde{z}^{-1}(\mathbf{P}^{N-n-1}(a))$. Since the equality $$ {E}_{X/S(a)}^{*} = \tilde{z}^{-1}(\mathbf{P}^{N-n-1}(a))$$
is true outside of the exceptional divisor, it is true  everywhere,  by taking the  closure. 

It remains  to prove that ${E}_{X/S(a)}\rightarrow {E}_{X/S(a)}^{*} $ is an isomorphism (in a neighbourhood of $w$), but it is obvious. Indeed,  it is the germ of  a morphism between two smooth varieties of the same dimension  which sends a smooth divisor of one onto a smooth divisor of the other and which  is an isomorphism outside of these divisors. 
\end{proof}

By Lemma \ref{Lemme2}, we can consider Lemma  \ref{Lemme1} as a simple consequence of: 

\begin{lemma} \label{Lemme3}
There exists a dense open set of matrices $a \in M_{N \times n} (\mathbf C)$ for which the map $ {^\tau} \tilde{z} : {^ \tau} \Delta \rightarrow \mathbf{P}^{N-1} $ is effectively transversal to $\mathbf{P}^{N-n-1}(a)$ in at least one point $w \in {}^\tau \Delta$.
\end{lemma}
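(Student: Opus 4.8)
The plan is to reduce Lemma~\ref{Lemme3} to a standard Bertini--Sard transversality argument applied to the morphism $^{\tau}\tilde z\colon{}^{\tau}\Delta\to\mathbf P^{N-1}$, where the moving target is the family of linear subspaces $\mathbf P^{N-n-1}(a)$ as $a$ ranges over $M_{N\times n}(\mathbf C)$. First I would recall that $^{\tau}\Delta$ is a smooth quasi-projective variety of dimension $2n-1$ (it is a Zariski-dense open subset of $^{\tau}D_X^{\red}$, which has dimension $2n-1$), and that $^{\tau}\tilde z$ is an algebraic (in fact holomorphic) morphism to $\mathbf P^{N-1}$; restricting to a suitable representative we may work with honest varieties rather than germs. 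The subspaces $\mathbf P^{N-n-1}(a)=\mathbf P(\ker a)$ sweep out, as $a$ varies over the dense open locus where $a$ has rank $n$, \emph{every} $(N-n-1)$-plane in $\mathbf P^{N-1}$; so the family is the full Grassmannian family of codimension-$n$ linear subspaces, which acts transitively and without fixed points on $\mathbf P^{N-1}$.

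The key computation is then dimensional. Effective transversality of $^{\tau}\tilde z$ to $\mathbf P^{N-n-1}(a)$ at a point $w$ means that $w$ maps into $\mathbf P^{N-n-1}(a)$ and the image of the tangent map $d(^{\tau}\tilde z)_w$ together with the tangent space of $\mathbf P^{N-n-1}(a)$ spans the tangent space of $\mathbf P^{N-1}$. I would form the incidence variety $\mathcal W=\{(w,a)\in{}^{\tau}\Delta\times U : {}^{\tau}\tilde z(w)\in\mathbf P^{N-n-1}(a)\}$, where $U\subset M_{N\times n}(\mathbf C)$ is the rank-$n$ open set, and observe that the second projection $\mathcal W\to U$ is the relevant one. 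Because the linear subspaces move in a transitive family, the first projection $\mathcal W\to{}^{\tau}\Delta$ is a fibre bundle whose fibre over $w$ is $\{a : {}^{\tau}\tilde z(w)\in\mathbf P(\ker a)\}$, a smooth irreducible variety; hence $\mathcal W$ is smooth (and one can compute $\dim\mathcal W=\dim{}^{\tau}\Delta+\dim U-(n-1)=2n-1+Nn-(n-1)$, using that a generic $(N-n-1)$-plane through a fixed point is a codimension-$(n-1)$ condition on $a$, since the plane has codimension $n$ in $\mathbf P^{N-1}$ but passes through a point). The generic smoothness (Sard) theorem applied to $\mathcal W\to U$ then shows that for $a$ in a dense open subset of $U$, the fibre $\mathcal W_a={}^{\tau}\tilde z^{-1}(\mathbf P^{N-n-1}(a))$ is smooth of the expected dimension $(2n-1)-(n-1)$, wherever it is nonempty; smoothness of the fibre of expected dimension is exactly effective transversality at each of its points.

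What remains, and what I expect to be the genuine content rather than a formality, is \emph{nonemptiness}: one must guarantee that for generic $a$ the intersection ${}^{\tau}\tilde z({}^{\tau}\Delta)\cap\mathbf P^{N-n-1}(a)$ is not empty, i.e.\ that at least one point $w\in{}^{\tau}\Delta$ actually survives. Here I would use the dimension count for the \emph{image}: $\overline{{}^{\tau}\tilde z({}^{\tau}\Delta)}$ is an irreducible subvariety of $\mathbf P^{N-1}$, and a subvariety of $\mathbf P^{N-1}$ of dimension $d$ meets a generic linear subspace of codimension $c$ precisely when $d\ge c$. The codimension of $\mathbf P^{N-n-1}(a)$ is $n$, so I need $\dim\overline{{}^{\tau}\tilde z({}^{\tau}\Delta)}\ge n$. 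This is where I would invoke the geometry set up in Section~\ref{section2}: for a \emph{non-trivial} type $\tau$ the morphism $\tilde z'\mid{}^{\tau}D_X(x)$ on each fibre is finite (stated just before the Corollary in Section~\ref{section2}), so the image of $^{\tau}D_X^{\red}$ under $^{\tau}\tilde z$ has dimension $\dim{}^{\tau}D_X^{\red}-\dim{}^{\tau}X=(2n-1)-\dim{}^{\tau}X$; combined with $\dim{}^{\tau}X\le n-1$ (the confluence-loci bound, valid for non-trivial $\tau$ since such loci lie in the singular locus), this gives $\dim\overline{{}^{\tau}\tilde z({}^{\tau}\Delta)}\ge n$, as required. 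The main obstacle is thus not the transversality bookkeeping but checking carefully that the ``moving linear subspace'' family $\mathbf P(\ker a)$ is rich enough to qualify as a Bertini family (transitivity on points \emph{and} on tangent directions through a point), and that the finiteness and dimension bounds from Section~\ref{section2} genuinely force the image to have dimension $\ge n$ for every non-trivial $\tau$; once these are in hand, Sard's theorem plus the generic-linear-section incidence count close the argument and, via Lemmas~\ref{Lemme2} and~\ref{Lemme1}, complete the proof of Theorem~\ref{theorem2}.
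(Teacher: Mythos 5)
Your proposal is in the right territory, and the Bertini/Sard incidence setup is a plausible way to reorganize the argument, but it misses the one real subtlety that the paper's proof is built around: $^{\tau}\Delta$ is a \emph{germ}, not an honest quasi-projective variety. The move ``restricting to a suitable representative we may work with honest varieties'' is precisely the step that needs justification. With a fixed representative the Sard incidence argument produces a dense open set of matrices $a$, but that dense open set depends on the representative; shrinking the representative could shrink it, and nothing in the argument guarantees that the transversality point $w$ can be taken arbitrarily close to the base point --- which is what is needed since $^{\tau}\Delta$ only makes sense as a germ. Likewise, your nonemptiness count speaks of the dimension of $\overline{{}^{\tau}\tilde z({}^{\tau}\Delta)}$, but the image of a representative of a germ under a holomorphic map is in general not an analytic subset of $\mathbf P^{N-1}$, so the ``generic linear space of codimension $c$ meets a $d$-dimensional variety iff $d\ge c$'' principle does not apply directly.

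The paper resolves exactly this by \emph{not} applying the linear-section argument to $^{\tau}\Delta$ at all. It stratifies $^{\tau}D_X^{\red}$ so that the central fibre $^{\tau}D_X^{\red}(0)$ and the complement of $^{\tau}\Delta$ are unions of strata, refines to a Whitney $(a)$-stratification, and picks the maximal stratum $W_0$ inside the central fibre (which is a genuine compact algebraic set, since $\tilde z'$ restricted to it is finite). It then proves the generic transversality on $W_0$ (Lemma~\ref{Lemme4}, reduced via the finiteness of $\tilde z'\vert D_X(0)$ and the bound $\dim{}^{\tau}D_X^{\red}(0)\ge n$ to the clean statement Lemma~\ref{Lemme5} about linear sections of a projective variety), and finally uses the Whitney $(a)$-condition and Proposition~\ref{proposition finale} of the appendix to \emph{propagate} effective transversality from $W_0$ to a point $w$ of the open stratum $W\subset{}^{\tau}\Delta$ arbitrarily close to $w_0$. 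This propagation step is the content of Lemma~\ref{Lemme3}, and it is entirely absent from your proposal. Note also that $^{\tau}\Delta$ need not meet $^{\tau}D_X^{\red}(0)$ at all (for $n\ge 2$ the central fibre is a proper analytic subset of $^{\tau}D_X^{\red}$, and the dense open set $^{\tau}\Delta$ may avoid it), so one cannot simply intersect with the fibre and apply Bertini there either --- the Whitney propagation is genuinely doing work. Your dimension count for the image is salvageable if you apply it to the image of the central fibre rather than to the image of $^{\tau}\Delta$, and then you recover essentially the paper's Lemma~\ref{Lemme4}; but without the stratification step you still have no way to move from a transversality point over $0$ to one in $^{\tau}\Delta$.
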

\begin{proof}
Let us construct a {\it stratification} \index{stratification} of ${^{\tau} {D}_{{X}}}^{\red}$ such that each of the following analytic sets is a union of strata: 

 \begin{enumerate}
\item  the  reduced fiber  $^{\tau} {D}^{\red}_X(0)$ of $^{\tau} {D}_{{X}}^{\red}$ over the origin $0 \in X$;
\item  the complement of the  Zariski open set ${^\tau}\Delta$. 
\end{enumerate}
 Let  us denote by ${W}$ be the maximal stratum of this stratification (obviously ${W} \subset {^\tau}\Delta$) and by $W_0$ the maximal stratum of one  (arbitrarily chosen) of the irreducible components of $^{\tau} {D}_{{X}}^{\red}$. We will assume that the stratification has been chosen  sufficiently fine  so that every  pair of strata  $({W}_0 , {V})$  satisfies Whitney  (a)-Condition  \cite{Whi65}, where ${V}$ belongs to the {\it star} \index{stratum!star of a}  of ${W}_0$ (see the appendix in the present paper). In these conditions, it follows from the appendix  that if the map $ {^\tau} \tilde{z} :{^{\tau}{D}^{\red}_{{X}} \rightarrow \mathbf{P}^{N-1}} $ has its restriction to ${W}_0$ effectively transversal to $\mathbf{P}^{N-n-1}(a)$ at a point ${w}_0 \in {W}_0$, then its restriction to ${W}$ will   be effectively  transversal to $\mathbf{P}^{N-n-1}(a)$ in at least one point ${w} \in {W}$ close to ${w}_0$.
\end{proof}
But, we will now prove the:
\begin{lemma} \label{Lemme4}
There exists a dense open set of matrices $a \in M_{N \times n} (\mathbf C)$ for which the map $ {}^\tau \tilde{{z}}_{\  | W_0} : {W}_0 \rightarrow \mathbf{P}^{N-1} $ is effectively transversal to $\mathbf{P}^{N-n-1}(a)$ in at least one point ${w}_0 \in {W}_0$.
\end{lemma}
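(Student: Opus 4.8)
The plan is to reduce the statement to a classical transversality-via-genericity argument applied to the single, irreducible, quasi-projective variety $W_0$ together with the morphism ${}^\tau\tilde z_{|W_0}\colon W_0\to\mathbf P^{N-1}$. Write $Z={}^\tau\tilde z(W_0)\subset\mathbf P^{N-1}$ for the (constructible, irreducible) image; since $W_0$ is a smooth stratum and ${}^\tau\tilde z_{|W_0}$ is a morphism of varieties, there is a dense Zariski-open $W_0'\subset W_0$ over which ${}^\tau\tilde z_{|W_0}$ has locally constant rank, so that $W_0'$ is foliated by the fibres and the image of each tangent map $d({}^\tau\tilde z)_{w}$, $w\in W_0'$, is a fixed-dimensional subspace $T_w\subset T_{{}^\tau\tilde z(w)}\mathbf P^{N-1}$ of dimension $r:=\operatorname{rk}({}^\tau\tilde z_{|W_0})$. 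Effective transversality of ${}^\tau\tilde z_{|W_0}$ to $\mathbf P^{N-n-1}(a)$ at $w_0$ means: ${}^\tau\tilde z(w_0)\in\mathbf P^{N-n-1}(a)$ and $T_{w_0}+T_{{}^\tau\tilde z(w_0)}\mathbf P^{N-n-1}(a)=T_{{}^\tau\tilde z(w_0)}\mathbf P^{N-1}$. So I must exhibit, for $a$ in a dense open subset of $M_{N\times n}(\mathbf C)$, a point $w_0\in W_0'$ at which both conditions hold.

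First I would handle the incidence condition. The assignment $a\mapsto\mathbf P^{N-n-1}(a)$, where $\mathbf P^{N-n-1}(a)$ is the projectivised kernel of $a$, is defined and has the expected dimension on a dense open set $U\subset M_{N\times n}(\mathbf C)$ (the locus where $a$ has maximal rank $n$); there the $\mathbf P^{N-n-1}(a)$ sweep out a dense family of $(N-n-1)$-planes in $\mathbf P^{N-1}$, and through every point of $\mathbf P^{N-1}$ there passes such a plane. Hence for every $w_0\in W_0'$ the set $\{a\in U : {}^\tau\tilde z(w_0)\in\mathbf P^{N-n-1}(a)\}$ is non-empty; more usefully, the incidence variety $\mathcal I=\{(a,w)\in U\times W_0' : {}^\tau\tilde z(w)\in\mathbf P^{N-n-1}(a)\}$ is irreducible and dominates $U$, because its fibre over a general $w$ is (an open subset of) the linear space of matrices killing a fixed vector, of constant dimension. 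So the first projection $\mathcal I\to U$ is dominant.

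The heart of the matter is then to show that on a dense open subset of $\mathcal I$ the relevant transversality holds, i.e. that the ``bad'' locus where $T_w$ fails to be transversal to $T_{{}^\tau\tilde z(w)}\mathbf P^{N-n-1}(a)$ inside $T_{{}^\tau\tilde z(w)}\mathbf P^{N-1}$ does not dominate $U$. I would do this by a dimension count in $\mathcal I$: non-transversality at $(a,w)$ is a Schubert-type condition, namely $\dim\big(T_w\cap T_{{}^\tau\tilde z(w)}\mathbf P^{N-n-1}(a)\big)\ge r-n+1$ instead of the generic value $\max(r-n,0)$, and for fixed $w$ (hence fixed $r$-plane $T_w$) the set of $(N-n-1)$-planes through ${}^\tau\tilde z(w)$ meeting $T_w$ too much is a proper subvariety of the Schubert variety of all such planes, of codimension at least $1$; pulling this back through the map $\mathcal I\to\{\text{pointed planes}\}$, the bad locus has codimension $\ge1$ in $\mathcal I$, so it cannot dominate $U$. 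Its complement is a dense open $\mathcal I^{\circ}\subset\mathcal I$ still dominating $U$, and for $a$ in the (dense open) image of $\mathcal I^{\circ}$ one gets a point $w_0$ with ${}^\tau\tilde z(w_0)\in\mathbf P^{N-n-1}(a)$ and $T_{w_0}$ effectively transversal to $\mathbf P^{N-n-1}(a)$, which is exactly the assertion. (One also uses that $W_0'$ is dense in $W_0$, so $w_0$ can be taken in $W_0$ as required.)

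The step I expect to be the main obstacle is precisely this last dimension estimate, i.e. checking that the non-transversal locus in $\mathcal I$ is a proper subvariety. The subtlety is that $T_w$ varies with $w$ and the map $\mathcal I\to U$ has positive-dimensional fibres, so one cannot simply quote a Bertini or Kleiman statement off the shelf; one must verify that for a \emph{general} point of $W_0'$ the $r$-plane $T_w$ is in sufficiently general position with respect to the pencil of $(N-n-1)$-planes that can be realised as some $\mathbf P^{N-n-1}(a)$ through ${}^\tau\tilde z(w)$. This is where the precise shape of the family $a\mapsto\mathbf P^{N-n-1}(a)$ matters: because $a$ ranges over \emph{all} of $M_{N\times n}(\mathbf C)$, the planes $\mathbf P^{N-n-1}(a)$ through a fixed point $p$ are exactly \emph{all} $(N-n-1)$-planes through $p$, so the Schubert-variety argument applies cleanly and the bad set really does drop dimension. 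Once that is in place the rest is bookkeeping on constructible sets, and the intersection with the earlier dense open conditions ($a$ of rank $n$, $w\in W_0'$) keeps everything dense and open, so the lemma follows.
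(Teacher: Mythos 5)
There is a genuine gap: you never establish that $\dim W_0\ge n$, and without that fact the whole argument collapses. Your key step is the claim that the incidence variety $\mathcal I\subset U\times W_0'$ dominates $U$. But $\dim\mathcal I=\dim W_0'+(N-1)n$ (the fibre of $\mathcal I\to W_0'$ over $w$ is the linear space of rank-$n$ matrices killing a representative of ${}^\tau\tilde z(w)$, of dimension $(N-1)n$), while $\dim U=Nn$; dominance therefore \emph{requires} $\dim W_0'\ge n$, which you do not prove. Worse, if the rank $r$ of $d({}^\tau\tilde z)$ on $W_0'$ were $<n$, effective transversality $T_{w_0}+T_{{}^\tau\tilde z(w_0)}\mathbf P^{N-n-1}(a)=T\mathbf P^{N-1}$ would be outright impossible, since $r+(N-n-1)<N-1$; so the lemma would be false without the dimension bound. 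This is not bookkeeping one can fix at the end — it is the input that makes the Schubert/Bertini argument applicable at all.

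The paper supplies exactly this input before invoking the Grassmannian transversality (its Lemma 3.7): recall that $W_0$ is the top stratum of an irreducible component of the fibre ${}^\tau D_X^{\red}(0)$ over the origin. Because $\tau$ is a \emph{non-trivial} type, the confluence locus ${}^\tau X\subset X$ has dimension $\le n-1$, and since $\dim{}^\tau D_X^{\red}=2n-1$, the fibre ${}^\tau D_X^{\red}(0)$ has dimension $\ge n$; moreover (Section~2) ${}^\tau\tilde z$ is \emph{finite} on the fibre, so the image is an algebraic subvariety of $\mathbf P^{N-1}$ of dimension $\ge n$ and ${}^\tau\tilde z|_{W_0}$ is a local isomorphism on a dense open set. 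Only then does the problem reduce to the classical statement that a generic $(N-n-1)$-plane meets a fixed subvariety of dimension $\ge n$ transversally at a smooth point of a prescribed dense open subset. Your incidence-variety scheme is a reasonable way to \emph{prove} that classical statement, but as written it assumes the conclusion ($\mathcal I\to U$ dominant) rather than deriving it from the non-triviality of $\tau$ and the finiteness of ${}^\tau\tilde z$ on the special fibre.

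Separately, a smaller issue: your reduction passes to the open set $W_0'$ of constant rank, but you should check that constant rank $r$ there actually equals $\dim W_0$ (which the paper gets from finiteness); a non-injective differential on all of $W_0$ would again make effective transversality to an $(N-n-1)$-plane unattainable.
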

\begin{proof}
Since $\tau$ is a non-trivial type, the image of the projection $^{\tau} {D}_{{X}}^{\red} \rightarrow {X} $ is of dimension $\leqslant n-1$, so that the dimension of the fiber $^{\tau} {D}^{\red}_X(0)$ must be at least $n$ (as dim$^{\tau} {D}_{{X}}^{\red} = 2n-1 $). Now, we know (Section \ref{section2}) that the map ${^\tau} \tilde{z}$ restricted to $^{\tau} {D}^{\red}_X(0)$ is a finite morphism. By considering   the algebraic variety of dimension $\geqslant n$ in $\mathbf{P}^{N-1}$ defined as the  image of a component of $^{\tau} {D}^{\red}_X(0)$, and the Zariski  dense  open set   of this variety defined as  the image of the set of points of ${W}_0$ where the morphism is a local isomorphism, we  see that Lemma \ref{Lemme4} is reduced to: 
\begin{lemma}   \label{Lemme5}
Consider  an algebraic variety of dimension $ \geqslant n$ in the projective space $\mathbf{P}^{N-1}$  and a  Zariski dense open set in this variety. The set of  $(N-n-1)$-planes of $\mathbf{P}^{N-1}$ which intersect transversely this open set  in at least one smooth point contains a dense open set of the Grassmann manifold.
\end{lemma}
The proof of this lemma is left to the reader. This completes the proof of Lemma \ref{Lemme4}.
 \end{proof}

\noindent
To summarize:
\[
\left.
\begin{array}{ccccr}
 \hbox{Lemma 3.7} &  \Longrightarrow &  \hbox{Lemma  3.6} &  \Longrightarrow  &   \hbox{Lemma 3.5}   \\
  &   &  &  &  \hbox{Lemma 3.4}
\end{array}
\right\}
\left.
\begin{array}{cccc}
&&& \\
  \Longrightarrow &  \hbox{Lemma  3.3} &  \Longrightarrow & \hbox{Theorem  \ref{theorem2}}
\end{array}
\right.
\]

\noindent
This completes the proof of  Theorem \ref{theorem2}. 
\end{proof}

\begin{remark}
The arguments of Section \ref{section2}  generalize without difficulty to the relative case. Thus,  for every analytic subalgebra $R\subset A$,  we have the notion of {\it  confluence locus  relative to $R$} and the relative analog of Theorem \ref{theorem 1}. If $R$ is a parametrization of $A$, we can see,  by an argument similar to that of Section $\ref{section2}$, that the dimension of the relative confluence  locus  admits the same lower bound $2n-N$ as in the  absolute case; in particular, the  confluence locus of a hypersurface $X$ relative to a parametrization are the codimension $1$ components of  the {\it relative singular locus} of $X$, i.e., the set of points of $X$ where the finite morphism $X\to S$ is not a submersion of smooth varieties.  
\end{remark}
We deduce from this: 

\begin{theorem}[relative version of Theorem \ref{theorem 1H}] \label{thm:1Hrelative}
Let $X\rightarrow S$ be a finite morphism of a complex analytic hypersurface to a smooth variety of the same dimension. Then, a meromorphic function on $X$ is locally Lipschitz relatively to $S$ at every point of $X$ if and only if  it is   locally Lipschitz relatively to $S$ at one point of each irreducible component (of codimension $1$) of its polar locus. 
\end{theorem}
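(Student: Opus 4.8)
The plan is to obtain this statement from the relative analog of Theorem~\ref{theorem 1} recorded in the preceding remark, in exactly the way Theorem~\ref{theorem 1H} is obtained from Theorem~\ref{theorem 1} in Section~\ref{section2}. The ``only if'' implication is trivial, so the content lies in the converse; as in Theorems~\ref{theorem 1} and~\ref{theorem 1H} we take $f$ to be a locally bounded meromorphic function (a fraction of $A$), so that the relative analog of Theorem~\ref{theorem 1} applies to it. That analog says: $f$ is locally Lipschitz relative to $S$ at every point of $X$ if and only if it is so at one point of each relative confluence locus ${}^{\tau}X$. Since $X$ is a hypersurface we have $N=n+1$, so by the dimension estimate of the remark every ${}^{\tau}X$ has dimension $\geqslant 2n-N=n-1$; moreover the trivial types impose no condition on $f$, while—again by the remark—the non-trivial relative confluence loci are precisely the codimension~$1$ components of the relative singular locus $\Sigma$ of $X$, i.e.\ of the set of points of $X$ where the finite morphism $X\to S$ is not a submersion of smooth varieties. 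Hence $f$ is locally Lipschitz relative to $S$ everywhere if and only if it is so at one point of each codimension~$1$ component of $\Sigma$.

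It then remains to replace $\Sigma$ by the polar locus $P$ of $f$. As a locally bounded meromorphic function, $f$ is holomorphic wherever $\mathcal{O}_{X}$ is normal, in particular on the smooth part of $X$; thus $P$ is contained in the singular locus of $X$, which in turn is contained in $\Sigma$ (a point where $X$ is singular cannot be a point where $X\to S$ is a submersion of smooth varieties). Every codimension~$1$ component of $P$ is therefore a codimension~$1$ component of $\Sigma$. Conversely, if $C$ is a codimension~$1$ component of $\Sigma$ not contained in $P$, then $f$ is holomorphic at the points of $C$ outside $P$, hence locally Lipschitz there even in the ambient space, a fortiori locally Lipschitz relative to $S$; such a $C$ imposes no condition on $f$. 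Consequently the condition ``$f$ locally Lipschitz relative to $S$ at one point of each codimension~$1$ component of $\Sigma$'' is equivalent to ``$f$ locally Lipschitz relative to $S$ at one point of each codimension~$1$ component of $P$'', and the theorem follows by combining this with the first paragraph.

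The routine reduction above rests on two facts taken from the remark, and these carry the real content. The first is the relative version of Theorem~\ref{theorem 1}, proved exactly as in Sections~\ref{section1}--\ref{section2}: one forms the relative diagram $D_{X/S}\hookrightarrow E_{X/S}$, uses that $D_{X/S}$ is a divisor of the \emph{normal} space $E_{X/S}$ so that the vanishing of $f\widearc{\otimes}1-1\widearc{\otimes}f$ on it can be tested one irreducible component at a time, and reads the points of $D_{X/S}^{\red}$ as pairs of infinitely near points in a common fibre of $X/S$. The second—the step I expect to be the main obstacle—is that the non-trivial relative confluence loci of a hypersurface are exactly the codimension~$1$ components of the relative singular locus: this is the genuine geometric input, requiring one to rerun, in the relative setting, the dimension count following Theorem~\ref{theorem 1H} ($\dim{}^{\tau}D_{X/S}^{\red}=2n-1$, the relative analog of $\tilde z'$ is fibrewise finite so the fibres of $D_{X/S}\to X$ have dimension $\leqslant N-1=n$, whence $\dim{}^{\tau}X\geqslant n-1$) and to check both that a generic point of a non-trivial ${}^{\tau}D_{X/S}^{\red}$ lies over a point where $X\to S$ is not a submersion of smooth varieties and that, conversely, every codimension~$1$ component of the relative singular locus arises as some ${}^{\tau}X$. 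Granting this (it is asserted in the remark), nothing further is needed.
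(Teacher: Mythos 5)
Correct, and this is the deduction the paper has in mind: its own ``proof'' of Theorem~\ref{thm:1Hrelative} is the single phrase ``We deduce from this'', pointing to the preceding remark (that the non-trivial relative confluence loci of a hypersurface are exactly the codimension-$1$ components of the relative singular locus) together with the relative analog of Theorem~\ref{theorem 1}, and your write-up is precisely that deduction. You also spell out one step the paper glosses over (already in the absolute Theorem~\ref{theorem 1H}), namely the passage from the relative singular locus to the polar locus of $f$; your two-way check there---that codimension-$1$ components of the polar locus are codimension-$1$ components of the relative singular locus, while a component of the latter not contained in the polar locus imposes no condition since $f$ is holomorphic (hence ambient-Lipschitz, a fortiori Lipschitz relative to $S$) at its generic points---is correct and a welcome clarification.
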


\section{The particular case of plane curves}\label{sec: plans curves}  \label{section4}
 
Let $X\stackrel{(x,y)} \hookrightarrow \mathbf{C}^2$ be  a germ of reduced analytic plane curve and let  $\tilde{z}:E_X\to\mathbf{P}^1$ be the morphism   corresponding to the  germ of embedding $(x,y)$ (Section \ref{section2}). 

Let $U$ be the dense open set of $\mathbf{P}^1$ defined as the complement of the tangent  directions  of $X$.

Let $u\in U$. By performing a linear change of coordinates if necessary, we can assume that $u$ corresponds to the direction  of $\{x=0\}$.  In a neighbourhood of  $u$, we take as local coordinate $v$ in $\mathbf{P}^1$  the inverse of the slope in these coordinates.
\begin{proposition} \label{proposition1-IV}
In a neighbourhood of every point $w\in D_X \cap \tilde{z}^{-1}(u)$, $\tilde{z}{| D_X^{\red}}$ is an isomorphism, $E_X$ is smooth, and $E_X \cong E_{X/S(u)}\times D_X^{\red}$. 
\end{proposition}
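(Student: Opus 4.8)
The statement concerns a reduced plane curve germ $X \hookrightarrow \mathbf{C}^2$ and the behaviour of $E_X$, $D_X$ and the morphism $\tilde z : E_X \to \mathbf{P}^1$ over a non-tangent direction $u \in U$. The plan is to work entirely in the affine chart of $\mathbf{P}^1$ centred at $u$, where the direction of $\{x=0\}$ has been put, and to use the explicit description of $\widehat E_X$ as the closure in $\overline X \times \overline X \times \mathbf{P}^1$ of the graph of the map sending $(\overline x, \overline x')$ (off the diagonal) to the line $(x-x' : y-y')$. Since $u$ corresponds to $\{x=0\}$, the local coordinate $v$ near $u$ is $v = (x-x')/(y-y')$, so over a neighbourhood of $u$ the ideal $I_A$ generated by $x\widearc\otimes 1 - 1\widearc\otimes x$ and $y\widearc\otimes 1 - 1\widearc\otimes y$ is generated, after the obvious change of generators, by the single element $y\widearc\otimes 1 - 1\widearc\otimes y$ together with the relation $x\widearc\otimes 1 - 1\widearc\otimes x = v\,(y\widearc\otimes 1 - 1\widearc\otimes y)$.

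First I would show that over $U$ the relevant ideal is \emph{already} invertible before normalization, so that $\widehat E_X = E_X$ there and $\widehat D_X = D_X$ is reduced. The key point is that $u$ being a non-tangent direction means that on each branch of $\overline X$ the parametrization can be chosen so that $y$ is a uniformizing parameter (this is exactly the statement that $\{x=0\}$ is transverse to $X$), hence $y\widearc\otimes 1 - 1\widearc\otimes y$ is, on each pair of branches, a non-zero-divisor whose vanishing locus on $\overline X\times\overline X$ is smooth of codimension $1$ — it is the fibre product $\overline X \times_{S(u)} \overline X$ where $S(u)$ is the $y$-line. Thus $\overline X \times_X \overline X \hookrightarrow \overline X \times_{S(u)} \overline X$ is a Cartier divisor already, its blow-up is trivial, and $E_{X/S(u)} = \overline X\times_{S(u)}\overline X$ is smooth near any $w$ over $u$. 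This simultaneously gives smoothness of $E_X$ near $w$, the fact that $D_X^{\red} = D_X$ there, and identifies $E_{X/S(u)}$ with a smooth $1$-dimensional germ.

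Next I would identify $D_X$ near $w$ with $D_X^{\red}$ and show $\tilde z|_{D_X^{\red}}$ is an isomorphism onto a neighbourhood of $u$ in $\mathbf{P}^1$: $D_X$ is the exceptional divisor $\overline X\times_X\overline X$ sitting inside $E_X$, and near $w$ the function $v = \tilde z$ restricted to it is a coordinate, because $v$ together with $y$ (pulled back from either factor, they agree on the diagonal) form a local coordinate system on the smooth surface $E_X$ near $w$, with $D_X = \{y-y'=0\}$ locally. Finally the product decomposition $E_X \cong E_{X/S(u)} \times D_X^{\red}$ near $w$ follows by exhibiting the two coordinate functions: $v$ (which cuts out, for fixed value, a copy of $E_{X/S(u)} = \overline X\times_{S(u)}\overline X$, the fibre of $\tilde z$) and a coordinate on $D_X^{\red}$ (equivalently, the common value of $y$), giving an isomorphism of the germ of $E_X$ at $w$ with the product germ. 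Concretely: $E_X$ is smooth of dimension $2$ near $w$, $\tilde z$ is a submersion there with fibre $E_{X/S(u)}$ and $D_X^{\red} = \tilde z^{-1}(u)$ wait — rather $D_X^{\red}$ maps isomorphically to a transverse slice, so the map $(\tilde z, \text{projection to }D_X^{\red})$ is a local isomorphism $E_X \xrightarrow{\ \sim\ } (\text{nbhd of }u) \times D_X^{\red} \cong E_{X/S(u)} \times D_X^{\red}$.

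The main obstacle I anticipate is the bookkeeping needed to justify that, over a non-tangent direction, one may genuinely reduce to a \emph{single} branch-pair computation and that the smoothness is uniform in $w$ running over the (finitely many) points of $D_X\cap\tilde z^{-1}(u)$; in particular one must be careful that $D_X\cap\tilde z^{-1}(u)$ may consist of several points corresponding to different ordered pairs of branches of $X$, and that the argument "$y$ is a uniformizer on each branch" is applied pair by pair. Once that reduction is in place, everything becomes the elementary local-coordinate computation sketched above, and no further input beyond the explicit model of $\widehat E_X$ from Section \ref{section2} and the universal property of the normalized blow-up is needed.
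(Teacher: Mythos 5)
Your approach diverges substantially from the paper's and has genuine gaps, not merely bookkeeping to fill in.

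The paper does not try to exhibit local coordinates on $\widehat E_X$ or to show the unnormalized blow-up is already smooth over $U$. Instead it constructs a finite map $\Psi\colon E_X\to\mathbf{C}\times\mathbf{P}^1$, $\Psi(P,P',v)=(x(P)-vy(P),v)$, checks that $\Psi$ is unramified outside $\{0\}\times\mathbf{P}^1$, lifts the vector field $\partial/\partial v$ to a holomorphic vector field on the \emph{normal} space $E_X$ by invoking a theorem of Zariski on vector fields tangent to the ramification locus, and integrates it to produce the product structure directly. Smoothness of $E_X$ near $w$ and the fact that $\tilde z|_{D_X^{\red}}$ is a local isomorphism are then consequences of the product decomposition, not hypotheses one must establish first. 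The entire content of the proof is therefore the vector-field lift, and your proposal does not reproduce it or substitute anything equivalent.

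Several of the specific claims in your sketch are moreover false. You assert that the ideal of $\overline X\times_X\overline X$ in $\overline X\times_{S(u)}\overline X$ is already Cartier, so "its blow-up is trivial" and $E_{X/S(u)}=\overline X\times_{S(u)}\overline X$. But the paper records explicitly in Section \ref{section4} that for curves $E_{X/S}=\overline{\overline X\times_S\overline X}$, the \emph{normalization}, and this is essential: the curve $\overline X\times_{S(u)}\overline X$ is singular over the origin whenever two sheets of $\overline X$ collide there (for $y^2=x^3$ one gets $\{t^2=t'^2\}$, a node), and $y\otimes 1-1\otimes y$ vanishes identically on the diagonal component of $\{x=x'\}$, hence is a zerodivisor there, so the ideal it generates is principal but not invertible. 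You also claim "$\widehat D_X=D_X$ is reduced" over $U$; this contradicts Section \ref{section4}, where $D_X$ carries multiplicities $\mu(\tau)$ that recover the characteristic Puiseux exponents (for $y^2=x^3$ one finds $\mu(\tau)=3$). Finally, non-tangency of $u$ does \emph{not} make either coordinate function a uniformizer on a branch $\overline X_\alpha$ of multiplicity $n_\alpha>1$: transversality of $\{x=0\}$ only gives that $x$ has order exactly $n_\alpha$ on $\overline X_\alpha$, not order $1$. (You also appear to have interchanged $x$ and $y$ throughout: since $u$ is the direction of $\{x=0\}$, projecting along $u$ lands on the $x$-line, so $S(u)$ should be $\mathbf{C}_x$ and the relative ideal on $\{x=x'\}$ is $(y\otimes1-1\otimes y)$.) Once these points are corrected, the smoothness of $\widehat E_X$ near $\tilde z^{-1}(u)$ — which is, as it happens, true in the examples I tested — no longer follows from the linear-algebra argument you give; a genuine argument, either the paper's vector-field lift or an explicit computation that cancels common factors and then checks a Jacobian, is still required.
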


\begin{proof}
Firstly, let us  remark that for every $|v|$ and $|t|$ (and obviously  every $|x|$ and $|y|$)  small enough, the line $x-vy=t$ remains non-tangent to $X$ and therefore, intersects $X$ transversally at simple points if $t$ is non-zero.   

Let $\Gamma \subset (\overline{X}\times \overline{X} - \overline{X} \underset{X}{\times} \overline{X} )\times \mathbf{P}^1$ be  the graph of the  map  defined in Section \ref{section2}. We  consider the  map $ \Psi_0 \colon \Gamma \rightarrow  \mathbf{C}\times \mathbf{P}^1$ defined by $(P,P',v) \mapsto (x(P)-vy(P),v)$ (by noticing that, by definition, $x(P)-vy(P)=x(P')-vy(P')$). The map $\Psi_0$ extends to a meromorphic  map $\widehat{E}_X \xrightarrow[]{\Psi_1} \mathbf{C}\times \mathbf{P}^1$ which is obviously bounded, and so extends locally to a unique morphism $E_X\xrightarrow[]{\Psi} \mathbf{C}\times \mathbf{P}^1$ (all this is done in a  neighbourhood of a point $w$ of $\tilde{z}^{-1}(u)$ on $E_X$).

It is easy  to check, and moreover it is  geometrically obvious, that $\Psi$ has finite fibers. In addition, by the remark of the beginning of the proof, it is clear that $\Psi$ is unramified outside of $\{0\} \times \mathbf{P}^1$. Therefore, the   ramification  locus is $\{0\} \times \mathbf{P}^1$ (unless it is empty).

Hence, the vector field  $\frac{\partial}{\partial v}$ of $\mathbf{C}\times \mathbf{P}^1$ is tangent to the ramification locus  of $\Psi$. Therefore, it lifts by $\Psi$ to  a holomorphic vector field  on the normal space $E_X$ (see \cite[Theorem 2]{Zar65-a}).\footnote{We can also see this by an argument similar to that of Lemma \ref{Lemma6} below.}  At every point  $w\in D_X \cap \tilde{z}^{-1}(u)$,   the integration of this vector field in a neighbourhood of $w$ endows locally  $E_X$ with a product structure $E_X \simeq \tilde{z}^{-1}(u)\times \Psi^{-1}(\{0\}\times \mathbf{P}^1)$.   

But, on the one hand, we can now apply Lemma \ref{Lemme2}  to prove that $\tilde{z}^{-1}(u) \simeq E_{X/S(u)}$ in a neighbourhood of $w$, and on other hand, again by  the above remark, $\tilde{z}^{-1}(u)$ does not meet any $^\tau D_X$ with trivial type $\tau$.

We conclude by noticing that since the origin, which is the only possible singularity of the germ $X$, is the support of all non trivial confluence loci ${}^{\tau}X$, we have: 
$$\Psi^{-1}(\{0\}\times \mathbf{P}^{1})=\bigcup_{\tau \hbox{ \small non  trivial}} {^\tau D_X}.$$
\end{proof}

\begin{corollary}(See Section  \ref{section2}) In this situation, the equation of $D_{X/S(u)}$ in $E_{X/S(u)}$ is the equation of $D_X$ in $E_X$.
\end{corollary}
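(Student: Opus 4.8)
The plan is to read the Corollary off the product structure of Proposition \ref{proposition1-IV}: under that isomorphism $D_X$ becomes the pullback of $D_{X/S(u)}$ along the first projection, which yields the asserted equality of equations at once. So, near a point $w\in D_X\cap\tilde z^{-1}(u)$, I would fix the isomorphism $E_X\cong E_{X/S(u)}\times D_X^{\red}$ of Proposition \ref{proposition1-IV}, arranged (as in its proof, through the integration of the lifted vector field) so that $\tilde z^{-1}(u)$ is a section of the first projection $\mathrm{pr}_1\colon E_X\to E_{X/S(u)}$ while the germ of $D_X^{\red}$ at $w$ is the fibre $\mathrm{pr}_1^{-1}(\mathrm{pr}_1(w))$; recall that, via Lemma \ref{Lemme2}, this $\tilde z^{-1}(u)$ is exactly the image of the embedding $E_{X/S(u)}\hookrightarrow E_X$.

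Here $D_X^{\red}$ is smooth and irreducible near $w$ (it is carried isomorphically onto a disc by $\tilde z|_{D_X^{\red}}$) and $\tilde z^{-1}(u)$ meets no trivial ${}^{\tau}D_X$, so near $w$ the divisor $D_X$ coincides with a single non-trivial ${}^{\tau}D_X$; being an effective Cartier divisor on the smooth surface $E_X$ with support the smooth curve $D_X^{\red}$, it must equal $\mu(\tau)\,D_X^{\red}=\mathrm{pr}_1^{*}\!\left(\mu(\tau)\,\{\mathrm{pr}_1(w)\}\right)$, where $\mu(\tau)$ is the multiplicity of ${}^{\tau}D_X$ introduced in Section \ref{section2}. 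In particular a local equation of $D_X$ in $E_X$ is $\sigma^{\mu(\tau)}$, with $\sigma$ the $\mathrm{pr}_1$-pullback of a local coordinate on the smooth curve $E_{X/S(u)}$ vanishing at $\mathrm{pr}_1(w)$.

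It then remains to restrict along the section $E_{X/S(u)}=\tilde z^{-1}(u)\hookrightarrow E_X$. For this I would invoke the identity $D_{X/S}^{*}=E_{X/S}^{*}\cap D_X$ from the proof of Theorem \ref{theorem2}, in the \emph{scheme-theoretic} form that $D_{X/S}$ is the inverse image of $D_X$ under $E_{X/S}\to E_X$; this is legitimate because both $D_X$ and $D_{X/S}$ are pullbacks of the same centre $\overline{X}\underset{X}{\times}\overline{X}$, the first to $E_X$ and the second to $E_{X/S}$, so that it is merely a base-change identity for fibre products. Restricting the equation $\sigma^{\mu(\tau)}$ of $D_X$ along the section turns $\sigma$ back into a local coordinate on $E_{X/S(u)}$ and exhibits $\sigma^{\mu(\tau)}$ as a local equation of $D_{X/S(u)}$ in $E_{X/S(u)}$; this is precisely the equation of $D_X$ in $E_X$ under the canonical identification, and in particular it carries the same multiplicity $\mu(\tau)$, which is the point of the reference to Section \ref{section2}. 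The one step I expect to need care is this last one, namely making sure the inverse image of $D_X$ is taken scheme-theoretically rather than merely set-theoretically, so that multiplicities are preserved; granted that, everything else is a direct transcription of Proposition \ref{proposition1-IV}.
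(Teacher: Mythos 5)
Your proof is correct and fills in exactly the argument that the paper leaves implicit (the Corollary is stated with no proof, as an immediate consequence of Proposition \ref{proposition1-IV}). You read the statement off the local product structure $E_X\cong E_{X/S(u)}\times D_X^{\red}$, identify $D_X$ near $w$ as $\mu(\tau)$ times the fibre $\mathrm{pr}_1^{-1}(\mathrm{pr}_1(w))$ on the smooth surface $E_X$, and then use the scheme-theoretic base-change identity $D_{X/S}=\big(E_{X/S}\to E_X\big)^{-1}(D_X)$ — which indeed holds because both exceptional divisors are scheme-theoretic pullbacks of the same centre $\overline{X}\underset{X}{\times}\overline{X}$ through the commutative square relating the two normalized blow-ups, and because $I_A\mathcal{O}_{E_{X/S}}$ equals the relative ideal's extension $I_{A/R}\mathcal{O}_{E_{X/S}}$. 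Restricting $\mathrm{pr}_1^{*}\sigma^{\mu(\tau)}$ along the section $E_{X/S(u)}=\tilde z^{-1}(u)$ then gives $\sigma^{\mu(\tau)}$ as the equation of $D_{X/S(u)}$, preserving the multiplicity; this is precisely the content of the Corollary and its reference to Section \ref{section2}. The one point you correctly flagged — that the pullback must be taken scheme-theoretically rather than set-theoretically — is the same reading of $D_{X/S}^{*}=E_{X/S}^{*}\cap D_X$ that the paper itself relies on in the proof of Theorem \ref{theorem2}, so your appeal to it is consistent with the paper's own usage.
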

We will now study the relative situation: 
  
  $$\xymatrix { X  \ \    \ar[rd]_{(x)}  \ar@{^{(}->}[r]^{(x,y)}&\ \ {\mathbf C^2}  \ar[d]^{pr_1} \\
 & \ \   S={\mathbf C}
 }$$
by assuming that $\{x=0\}$ is not tangent to $X$ at $0$.

We  will denote by $X_\alpha$ the irreducible components of $X$ and by $n_\alpha$ their multiplicities. 

For a local ring of dimension $1$, the normalized blow-up of an ideal is a regular ring which is nothing but the normalized ring. Hence, $E_{X/S}=\overline{\overline{X} \underset{S}{\times} \overline{X}}$. We can easily determine  the irreducible components of $E_{X/S}$ and the morphism $E_{X/S}\to S$ by using the following lemmas, after having noticed that an irreducible component of  $E_{X/S}$ projects onto  a pair  of irreducible components of $\overline{X}$.

\begin{lemma} \label{Lemma1-IV}
Set $m_{\alpha,\alpha'}= {\lcm}(n_\alpha , n_{\alpha'})$ and  let $\varphi :{\mathbf C}\{x\} \to \mathbf{C}\{s\}$ be given by $\varphi(x)=s^{m_{\alpha,\alpha'}}$. The set $B$ of  $\mathbf{C}\{x\}$-homomorphisms  
\[\mathbf{C}\{x^{1/n_\alpha}\} \underset{\mathbf{C}\{x\}}{\otimes}\mathbf{C}\{x^{1/n_{\alpha'}}\} \longrightarrow \mathbf{C}\{x\}\]
can be identified with the set of pairs $\{(\beta,\beta')\in \mathbf{C}^2  \colon  (\beta^{n_\alpha},\beta'^{n_{\alpha'}})=(1,1)\}$ by the correspondance:
 $$
 \left\{
\begin{array}{ccc}
x^{1/n_\alpha} \otimes 1 &\longmapsto  &\beta s^{m_{\alpha \alpha'}/n_{\alpha}} \\
1 \otimes x^{1/n_{\alpha'}} & \longmapsto & \beta' x^{m_{\alpha \alpha'}/n_{\alpha'}} \\
\end{array} \right.
$$
(the pairs ($\beta,\beta'$) correspond to the pairs of determinations of $(x^{1/n_\alpha},x^{1/n_{\alpha'}}))$.

If we endow $B$ with the equivalence relation: $b_1\sim b_2$ if $b_1 - b_2$ is  a $\mathbf{C}\{x\}$-automorphism of $\mathbf{C}\{s\}$ (it is the equivalence of pairs  of determinations ``modulo the monodromy''), then,  the set $B/_\sim$ has $(n_\alpha , n_{\alpha'})$ elements.
\end{lemma}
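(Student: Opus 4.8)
The statement is purely algebraic, so the plan is to work directly with the tensor product, using the standard presentation of the totally ramified extension $\mathbf{C}\{x\}\subset\mathbf{C}\{x^{1/n}\}$ as $\mathbf{C}\{x^{1/n}\}=\mathbf{C}\{x\}[T]/(T^{n}-x)$. Applying this to both factors, the source algebra becomes
\[
\mathbf{C}\{x^{1/n_\alpha}\}\underset{\mathbf{C}\{x\}}{\otimes}\mathbf{C}\{x^{1/n_{\alpha'}}\}\;\cong\;\mathbf{C}\{x\}[T,T']/\bigl(T^{n_\alpha}-x,\ {T'}^{n_{\alpha'}}-x\bigr),
\]
so that, under $\varphi\colon x\mapsto s^{m}$ (writing $m=m_{\alpha,\alpha'}$), an element of $B$ is exactly a pair $(\xi,\xi')\in\mathbf{C}\{s\}^{2}$ with $\xi^{n_\alpha}=s^{m}={\xi'}^{n_{\alpha'}}$, the dictionary being $T\otimes1\mapsto\xi$ and $1\otimes T'\mapsto\xi'$.

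The next step is to identify these roots. Since $\mathbf{C}\{s\}$ is a discrete valuation ring and $n_\alpha\mid m$, any $\xi$ with $\xi^{n_\alpha}=s^{m}$ has $s$-order $m/n_\alpha$, hence is of the form $\xi=\beta\,s^{m/n_\alpha}$ with $\beta$ a unit; then $\beta^{n_\alpha}=1$ forces $\beta$ to be a constant $n_\alpha$-th root of unity (its constant term is such a root, and a unit congruent to $1$ whose $n_\alpha$-th power equals $1$ must itself be $1$, as one sees from its lowest-order term). Symmetrically $\xi'=\beta'\,s^{m/n_{\alpha'}}$ with ${\beta'}^{n_{\alpha'}}=1$, and conversely every such pair $(\beta,\beta')$ visibly yields a homomorphism. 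This gives the stated bijection of $B$ with $\{(\beta,\beta')\in\mathbf{C}^{2}:\beta^{n_\alpha}={\beta'}^{n_{\alpha'}}=1\}$, which indeed records the pair of determinations of $(x^{1/n_\alpha},x^{1/n_{\alpha'}})$ read in the variable $s$.

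For the last assertion I would make the monodromy explicit. In characteristic $0$ over the algebraically closed field $\mathbf{C}$, the extension $\mathbf{C}\{x\}\subset\mathbf{C}\{s\}$ is Galois with group $\mu_{m}$, an $\eta$ with $\eta^{m}=1$ acting by $s\mapsto\eta s$. Post-composing $b_{(\beta,\beta')}$ with such an $\eta$ sends $T\otimes1\mapsto\beta(\eta s)^{m/n_\alpha}=\eta^{m/n_\alpha}\beta\,s^{m/n_\alpha}$ and likewise for $T'$, hence equals $b_{(\eta^{m/n_\alpha}\beta,\,\eta^{m/n_{\alpha'}}\beta')}$. Thus $B/_{\sim}$ is the orbit set of $\mu_{m}$ acting by translation on $\mu_{n_\alpha}\times\mu_{n_{\alpha'}}$ through the homomorphism $\eta\mapsto(\eta^{m/n_\alpha},\eta^{m/n_{\alpha'}})$. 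Writing $\eta=\omega^{j}$ for a fixed primitive $m$-th root $\omega$, one has $\eta^{m/n_\alpha}=1\iff n_\alpha\mid j$, so the kernel of this homomorphism consists of those $j$ divisible by $\lcm(n_\alpha,n_{\alpha'})=m$ and is therefore trivial. Hence $\mu_{m}$ acts freely, every orbit has $m$ elements, and the number of orbits is $n_\alpha n_{\alpha'}/m=n_\alpha n_{\alpha'}/\lcm(n_\alpha,n_{\alpha'})=(n_\alpha,n_{\alpha'})$.

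There is no genuine obstacle in this argument; the only points that need care are the presentation and tensor-product identification of the first step, and, in the last step, keeping track of the precise root of unity ($\eta^{m/n_\alpha}$, which is a primitive $n_\alpha$-th root when $\eta$ is primitive) by which the monodromy multiplies each coordinate.
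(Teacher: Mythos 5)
The paper explicitly leaves the proof of this lemma to the reader, so there is no proof of record to compare against; your argument is correct and complete. The presentation $\mathbf{C}\{x^{1/n}\}\cong\mathbf{C}\{x\}[T]/(T^{n}-x)$ correctly turns $B$ into the set of pairs of an $n_\alpha$-th and an $n_{\alpha'}$-th root of $s^{m}$ in $\mathbf{C}\{s\}$; the valuation argument rightly forces the unit factors $\beta,\beta'$ to be constant roots of unity (a unit $u$ with $u^{k}=1$ has constant term a $k$-th root of unity and, after normalizing, lowest-order comparison kills the tail); and the identification of $\mathrm{Aut}_{\mathbf{C}\{x\}}(\mathbf{C}\{s\})$ with $\mu_{m}$ acting on $\mu_{n_\alpha}\times\mu_{n_{\alpha'}}$ through $\eta\mapsto(\eta^{m/n_\alpha},\eta^{m/n_{\alpha'}})$ is a clean way to see that the action is free, since $n_\alpha\mid j$ and $n_{\alpha'}\mid j$ force $m=\lcm(n_\alpha,n_{\alpha'})\mid j$; hence $n_\alpha n_{\alpha'}/m=(n_\alpha,n_{\alpha'})$ orbits. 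One small reading point you handled sensibly: the statement's ``$b_1-b_2$ is a $\mathbf{C}\{x\}$-automorphism'' is an infelicity of the text (the difference of two ring homomorphisms is not one); the intended relation is that $b_1$ and $b_2$ differ by post-composition with an automorphism of $\mathbf{C}\{s\}$ over $\varphi(\mathbf{C}\{x\})$, which is exactly the monodromy action you used.
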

\begin{lemma}  \label{Lemma2-IV}
\[\overline{\mathbf{C}\{x^{1/n_\alpha}\}  \underset{{\mathbf{C}\{x\}}}{\otimes}\mathbf{C}\{x^{1/n_{\alpha'}}\}} =\underset{{B/_\sim}}{ \oplus}   \mathbf{C}\{x^{1/m_{\alpha \alpha'}}\} \]
with the obvious arrows.
\end{lemma}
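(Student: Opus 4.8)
The plan is to compute $\overline{T}$ for the ring $T:=\mathbf{C}\{x^{1/n_\alpha}\}\otimes_{\mathbf{C}\{x\}}\mathbf{C}\{x^{1/n_{\alpha'}}\}$ by first identifying its total ring of fractions as a product of fields, then taking integral closures factor by factor, and finally matching the index set with the set $B/_\sim$ of Lemma \ref{Lemma1-IV}. Write $m:=m_{\alpha\alpha'}=\lcm(n_\alpha,n_{\alpha'})$ and $d:=(n_\alpha,n_{\alpha'})$, so that $n_\alpha n_{\alpha'}=md$ and, on comparing prime factorizations, $\gcd(m/n_\alpha,\,m/n_{\alpha'})=1$. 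Since $\mathbf{C}\{x^{1/n_{\alpha'}}\}$ is free over $\mathbf{C}\{x\}$, the ring $T$ is free of rank $n_\alpha n_{\alpha'}$ over the discrete valuation ring $\mathbf{C}\{x\}$, hence torsion-free; moreover $T\otimes_{\mathbf{C}\{x\}}\mathbf{C}(\{x\})=\mathbf{C}(\{x^{1/n_\alpha}\})\otimes_{\mathbf{C}(\{x\})}\mathbf{C}(\{x^{1/n_{\alpha'}}\})$ is a tensor product of two finite separable field extensions of the fraction field $\mathbf{C}(\{x\})$ of $\mathbf{C}\{x\}$, hence a reduced (étale) $\mathbf{C}(\{x\})$-algebra. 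So $T$ is reduced, $\tot(T)=T\otimes_{\mathbf{C}\{x\}}\mathbf{C}(\{x\})$, and $\overline{T}$ is the integral closure of $\mathbf{C}\{x\}$ in $\tot(T)$. By Puiseux's theorem every finite field extension of $\mathbf{C}(\{x\})$ equals $\mathbf{C}(\{x^{1/k}\})$ with $k$ its degree, and the integral closure of $\mathbf{C}\{x\}$ in it (its valuation ring) is $\mathbf{C}\{x^{1/k}\}$; so once $\tot(T)$ is written as a finite product of fields, $\overline{T}$ follows at once.

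Next I would decompose $\tot(T)$. Inside a fixed algebraic closure of $\mathbf{C}(\{x\})$ both $\mathbf{C}(\{x^{1/n_\alpha}\})$ and $\mathbf{C}(\{x^{1/n_{\alpha'}}\})$ lie in the Galois extension $\mathbf{C}(\{x^{1/m}\})$, and since $\gcd(m/n_\alpha,m/n_{\alpha'})=1$ the subfield they generate contains $x^{1/m}$, hence is all of $\mathbf{C}(\{x^{1/m}\})$. A $\mathbf{C}(\{x\})$-algebra homomorphism from $\tot(T)$ to $\overline{\mathbf{C}(\{x\})}$ amounts — exactly as in Lemma \ref{Lemma1-IV} — to a pair $(\beta,\beta')$ of determinations of $(x^{1/n_\alpha},x^{1/n_{\alpha'}})$, namely the one with $x^{1/n_\alpha}\otimes1\mapsto\beta\,(x^{1/m})^{m/n_\alpha}$ and $1\otimes x^{1/n_{\alpha'}}\mapsto\beta'\,(x^{1/m})^{m/n_{\alpha'}}$, and by the previous sentence its image is a full copy of $\mathbf{C}(\{x^{1/m}\})$. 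Two such homomorphisms have the same kernel iff the two pairs differ by an element of $\operatorname{Gal}(\mathbf{C}(\{x^{1/m}\})/\mathbf{C}(\{x\}))=\mu_m$, i.e. (Lemma \ref{Lemma1-IV}) iff they are $\sim$-equivalent; hence the primitive idempotents of $\tot(T)$, equivalently its field factors, correspond canonically to the elements of $B/_\sim$. Since $|B/_\sim|=d$ (Lemma \ref{Lemma1-IV}) while $\dim_{\mathbf{C}(\{x\})}\tot(T)=n_\alpha n_{\alpha'}=md$ is the sum of the degrees of the factors, each factor must be $\mathbf{C}(\{x^{1/m}\})$, so $\tot(T)\cong\bigoplus_{B/_\sim}\mathbf{C}(\{x^{1/m}\})$.

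Finally, taking the integral closure of $\mathbf{C}\{x\}$ in each factor gives $\overline{T}=\bigoplus_{B/_\sim}\mathbf{C}\{x^{1/m}\}$, the arrow into the summand indexed by the class of $(\beta,\beta')$ being $x^{1/n_\alpha}\otimes1\mapsto\beta\,(x^{1/m})^{m/n_\alpha}$, $1\otimes x^{1/n_{\alpha'}}\mapsto\beta'\,(x^{1/m})^{m/n_{\alpha'}}$ — that is, the restriction to $\overline{T}$ of the homomorphism $b$ of Lemma \ref{Lemma1-IV} (it extends uniquely, $\mathbf{C}\{x^{1/m}\}$ being normal). These are the ``obvious arrows'' of the statement, and this is the asserted decomposition.

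The only genuine work, I expect, is the compositum computation — that $\mathbf{C}(\{x^{1/n_\alpha}\})$ and $\mathbf{C}(\{x^{1/n_{\alpha'}}\})$ together generate $\mathbf{C}(\{x^{1/m}\})$ and not some proper subfield, which comes down to $\gcd(m/n_\alpha,m/n_{\alpha'})=1$ — together with the bookkeeping identifying the index set of the decomposition with the set $B/_\sim$ furnished by Lemma \ref{Lemma1-IV}. The remaining ingredients (freeness and reducedness of $T$, and Puiseux's normal form $\mathbf{C}\{x^{1/k}\}$ for the integral closure of $\mathbf{C}\{x\}$ in a degree-$k$ extension) are completely standard.
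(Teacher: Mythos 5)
Your proof is correct, and its output matches the statement. The paper does not actually write out a proof: it only says Lemma~\ref{Lemma2-IV} ``can be proved by using Lemma~\ref{Lemma1-IV} and the universal property of the normalization.'' The route that hint suggests is slightly different in packaging from yours: for each $b\in B$ one has a homomorphism from $T=\mathbf{C}\{x^{1/n_\alpha}\}\otimes_{\mathbf{C}\{x\}}\mathbf{C}\{x^{1/n_{\alpha'}}\}$ to the normal ring $\mathbf{C}\{s\}=\mathbf{C}\{x^{1/m}\}$, so by the universal property of normalization (the algebraic version in the Preliminaries) each such $b$ extends uniquely to $\overline T$; the $\sim$-equivalence identifies exactly those extensions sharing a kernel, so one gets a map $\overline T\to\bigoplus_{B/\sim}\mathbf{C}\{x^{1/m}\}$, and one then checks it is an isomorphism (say by comparing ranks over $\mathbf{C}\{x\}$, using $|B/\sim|=d$ and $md=n_\alpha n_{\alpha'}$). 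Your argument instead goes down to $\tot(T)$ first, identifies it as an \'etale $\mathbf{C}(\{x\})$-algebra, decomposes it into field factors via the compositum computation $\gcd(m/n_\alpha,m/n_{\alpha'})=1$ and the $\mu_m$-action, and then pulls back integral closure factor by factor with Puiseux. Both approaches locate the same $d$ primitive idempotents and use the same degree count; yours is somewhat more explicit and makes the role of Puiseux's theorem visible, while the paper's hint emphasizes that the homomorphisms $b$ of Lemma~\ref{Lemma1-IV} automatically lift to the normalization because the target is normal. Either way the ``obvious arrows'' are the same, and your identification of them with the $b$'s (restricted from their unique extensions to $\overline T$) is the right one.
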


Lemma \ref{Lemma2-IV} can be proved by using Lemma \ref{Lemma1-IV} and the universal property of  the normalization. The proof of  Lemma \ref{Lemma1-IV} is left to the reader.

We can now determine the equation of $D_{X/S}$ in $E_{X/S}$. At a point of an irreducible component of $E_{X/S}$, the ideal of $^\tau D_{X/S}$ is generated by $y\otimes1 - 1\otimes y= a_\tau s^{\mu(\tau)}$ (where $a_\tau$ is a unit of $\mathbf{C}\{s\}$), which can be interpreted as the difference of $y_{\alpha\beta}(x)-y_{\alpha'\beta'}(x)$ of the Puiseux expansions of $y_\alpha$ and $y_\alpha'$ computed for the ``determinations'' $(\beta, \beta')$ of $(x^{1/n_{\alpha}} , x^{1/n_{\alpha'}})$ corresponding to the chosen irreducible component: 
\[y_{\alpha\beta}(x)-y_{\alpha'\beta'}(x)=a_{\beta \beta'}x^{\mu(\beta, \beta')/m_{\alpha \alpha'}}\]
where $a_{\beta\beta'}$ is a unit of $\mathbf{C}\{x^{1/m_{\alpha\alpha'}}\}$,   $a_{\beta\beta'}=a_\tau$ and    $\mu(\beta,\beta')=\mu(\tau)$.

In the particular case where $X$ is irreducible of multiplicity $n$ at the origin, we deduce from this that the  sequence of the distinct $\mu(\tau)$ (for $\tau$ non trivial), indexed in increasing order, coincides with the sequence:
$$
\bigg\{ \frac{m_1}{n_1}n,\frac{m_2}{n_1 n_2}n,\dots, \frac{m_g}{n_1\dots n_g}n \bigg\}
$$
where the $\frac{m_i}{n_1\dots n_i}n$ are  the  characteristic Puiseux  exponents.

Now, we return  back to $E_X$ and $D_X$. If $^{\tau}D_{X/S}$ is an irreducible component of $D_X$, we know from Section  \ref{section2} that $\tilde{z}{|  {}^{\tau}D_{X}}$ is a finite morphism, and it follows  from Proposition \ref{proposition1-IV} that its ramification  locus is contained in the set of directions of tangent lines  to the irreducible components $X_{\alpha}$ and $X_{\alpha'}$ corresponding to $^{\tau} D_X$.
\begin{proposition}
\begin{enumerate}
    \item[(i)] If $X_\alpha$ and $X_{\alpha'}$ have the same tangent line, then $\deg \tilde{z}{| ^{\tau}D_{X}}=1$, so the number of types $\tau$ corresponding to the pair $(\alpha,\alpha')$ equals $(n_\alpha,n_{\alpha'})$.
     \item[(ii)] If $X_\alpha$ and $X_{\alpha'}$ have distinct tangent lines, then $\deg\tilde{z}{| {}^{\tau}D_{X}}=(n_\alpha,n_{\alpha'})$ and  there is a unique type $\tau$.
\end{enumerate}
\end{proposition}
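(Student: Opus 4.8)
The plan is to reduce both assertions to a statement about the finite morphism $\tilde z$ restricted to the part of $D_X^{\red}$ sitting over the pair $(\alpha,\alpha')$, viewed as a branched covering of $\mathbf P^1$. Fix $(\alpha,\alpha')$ and let $\Delta$ be the union of those irreducible components $^{\tau}D_X^{\red}$ of $D_X^{\red}$ that project to $(X_\alpha,X_{\alpha'})$. Since the types $\tau$ in question are non-trivial, $\Delta$ is a curve lying in the fibre $D_X(0)$, and $\pi:=\tilde z|_\Delta\colon\Delta\to\mathbf P^1$ is finite. As recalled immediately before the statement, $\pi$ is unramified — in fact, by Proposition \ref{proposition1-IV}, a local isomorphism — over the complement in $\mathbf P^1$ of the set $T$ of tangent directions of $X_\alpha$ and $X_{\alpha'}$; thus $\pi$ is a finite covering map over $\mathbf P^1\setminus T$. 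The number of types $\tau$ attached to $(\alpha,\alpha')$ is the number of connected components of $\Delta$, and $\deg(\tilde z|_{^{\tau}D_X})$ is the degree of $\pi$ on the corresponding component, so it suffices to determine (a) the total degree of $\pi$ and (b) the monodromy of this covering.

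For (a) I would compute the fibre of $\pi$ over a direction $u$ transverse to every branch of $X$. By (the proof of) Lemma \ref{Lemme2}, $\tilde z^{-1}(u)$ is the image of $E_{X/S(u)}$, and by Lemmas \ref{Lemma1-IV} and \ref{Lemma2-IV} the part of $E_{X/S(u)}$ over $(\alpha,\alpha')$ has exactly $(n_\alpha,n_{\alpha'})$ irreducible components, each a smooth disc meeting $D_X$ in its single exceptional point. By Proposition \ref{proposition1-IV}, near $\tilde z^{-1}(u)$ the space $E_X$ is smooth and locally a product $E_{X/S(u)}\times D_X^{\red}$, so these $(n_\alpha,n_{\alpha'})$ points of $\Delta$ are pairwise distinct and $\pi$ is unramified at each of them. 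Hence $\deg\pi=(n_\alpha,n_{\alpha'})$.

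In case (i), when $X_\alpha$ and $X_{\alpha'}$ share their tangent line, $T$ is a single point, so $\pi$ is an unramified covering of $\mathbf P^1$ minus one point, i.e. of the simply connected space $\mathbf C$; it is therefore trivial there, and $\Delta$ has $(n_\alpha,n_{\alpha'})$ components (putting back the finite fibre over $T$ merges nothing, each component already mapping onto $\mathbf P^1$). As the degrees of $\pi$ on the components are positive integers summing to $(n_\alpha,n_{\alpha'})$, each equals $1$; this gives $\deg\tilde z|_{^{\tau}D_X}=1$ and the asserted count of types.

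In case (ii), when the tangent lines are distinct, $T$ has two points and $\pi$ is a degree-$(n_\alpha,n_{\alpha'})$ covering of $\mathbf P^1\setminus T\cong\mathbf C^*$; the number of components of $\Delta$ is then the number of cycles of its monodromy, and it remains to show this monodromy is a single $(n_\alpha,n_{\alpha'})$-cycle — this is the main obstacle. I would make the blow-up explicit: over $\overline X_\alpha\times\overline X_{\alpha'}\cong\mathbf C^2$ the ideal $I_A$ is generated by $f=x\otimes1-1\otimes x$ and $g=y\otimes1-1\otimes y$, which form a regular sequence (their common zero is the single point $0$), so $\widehat E_X=V(fU_2-gU_1)\subset\mathbf C^2\times\mathbf P^1$ with $\widehat D_X^{\red}=\{0\}\times\mathbf P^1$ carried identically to $\mathbf P^1$ by $\hat z$. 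After a preliminary linear change of coordinates keeping $\{x=0\}$ transverse, writing $y_\alpha(t)=t^{n_\alpha}(c_\alpha+\cdots)$, $y_{\alpha'}(t')=t'^{n_{\alpha'}}(c_{\alpha'}+\cdots)$ with distinct finite tangent slopes $c_\alpha\ne c_{\alpha'}$ and using the affine coordinate $\rho$ (inverse slope) on $\mathbf P^1$, the surface $\widehat E_X$ becomes $t^{n_\alpha}[1-\rho(c_\alpha+\cdots)]=t'^{n_{\alpha'}}[1-\rho(c_{\alpha'}+\cdots)]$; near a generic point of $\widehat D_X$ this is $t^{n_\alpha}=t'^{n_{\alpha'}}W$ with $W$ a unit whose restriction to $\widehat D_X$ equals $(1-\rho c_{\alpha'})/(1-\rho c_\alpha)$, and the normalization is obtained by extracting $W^{1/(n_\alpha,n_{\alpha'})}$, producing $(n_\alpha,n_{\alpha'})$ sheets indexed by $(n_\alpha,n_{\alpha'})$-th roots of unity. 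Since $c_\alpha\ne c_{\alpha'}$, the function $W|_{\widehat D_X}$ has a simple pole (resp. simple zero) at the direction $\rho=1/c_\alpha$ of $X_\alpha$ (resp. $\rho=1/c_{\alpha'}$), so one loop around it multiplies $W^{1/(n_\alpha,n_{\alpha'})}$ by a primitive root of unity and cyclically permutes the sheets; hence $\Delta$ is connected, there is a unique type $\tau$, and by (a) its degree is $(n_\alpha,n_{\alpha'})$. The delicate point to be checked carefully is precisely this monodromy computation: that the higher-order Puiseux terms (the ``$\cdots$'') and the dependence of $W$ on $(t,t')$ off the exceptional divisor only modify the local picture by single-valued holomorphic units near a generic point of $\widehat D_X$, so that the monodromy is governed entirely by the factor $(1-\rho c_{\alpha'})/(1-\rho c_\alpha)$, and that the cases where a tangent slope is $0$ or $\infty$ are disposed of by the preliminary coordinate change (which can be made while keeping $\{x=0\}$ transverse).
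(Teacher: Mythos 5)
Your argument is correct; the degree computation and case (i) match the paper's, but case (ii) takes a genuinely different route. The paper's proof of (ii) is a brief topological sketch (deform one collinear pair $(P_\alpha,P_{\alpha'})$ into the other $(Q_\alpha,Q_{\alpha'})$ keeping the slope of the joining line away from the two tangent directions, then pass to the limit), with the deformation asserted rather than constructed. You instead write the strict transform $t^{n_\alpha}=t'^{n_{\alpha'}}W$ explicitly, factor it near a generic point of $\widehat{D}_X$ as $\prod_{\zeta^d=1}(t^a-\zeta t'^bW^{1/d})$ with $d=(n_\alpha,n_{\alpha'})$, $n_\alpha=da$, $n_{\alpha'}=db$, and read the monodromy of the $d$-sheeted cover off the simple zero and pole of $W|_{\widehat{D}_X}=(1-\rho c_{\alpha'})/(1-\rho c_\alpha)$ at the two tangent directions: a single $d$-cycle, hence transitive, so $\Delta$ is connected. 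This turns the paper's implicit monodromy argument into an explicit computation, which is a real gain since the paper leaves exactly this point to the reader. One imprecision worth repairing: ``the normalization is obtained by extracting $W^{1/d}$'' overstates matters when both $a>1$ and $b>1$, since each factor $t^a-\zeta t'^bW^{1/d}$ is then still singular along the exceptional divisor and must itself be normalized. What you actually need, and what the factorization does give (using $(a,b)=1$), is that the $d$ factors are the $d$ analytically irreducible branches of $\widehat{E}_X$ along $\widehat{D}_X^{\red}$, so that $E_X\to\widehat{E}_X$ has exactly $d$ points over a generic point of $\widehat{D}_X^{\red}$, forming a torsor under the group of $d$-th roots of unity once a local determination of $W^{1/d}$ is fixed; your monodromy element then translates this torsor by a generator, whence a single orbit.
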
 

\begin{proof}
In  Case (i), let $r\in \mathbf{P}^1$ be  the direction of the common tangent line. Since $\mathbf{P}^1\setminus \{r\}$ is contractible, $^{\tau}D_{X} \setminus \tilde{z}^{-1}(r)$ is a trivial fiber bundle on $\mathbf{P} \setminus \{r\}$. This fiber bundle is connected since ${}^{\tau}D_X$ is irreducible, therefore, it is a covering space of degree $1$. 

Case (ii) is more delicate. Let $r_1$ and $r_2$ be  the two tangent directions and let $u\in \mathbf{P}^1 \setminus \{r_1,r_2\}$. We have to prove that we can join any two points of $E_{X/S(u)}$ by a path contained in $^{\tau}D_X$ and avoiding $\tilde{z}^{-1}(r_1)\cup\tilde{z}^{-1}(r_2) $. We can  do this by looking at two pairs of points $(P_\alpha, P_{\alpha'})$ and $(Q_\alpha, Q_{\alpha'})$, where  $P_\alpha, Q_{\alpha} \in X_\alpha \setminus \{0\}$ and $P_{\alpha'}, Q_{\alpha'}\in X_{\alpha'} \setminus \{0\}$ are  close to the origin and  located on the same line with slope $u$.  It is  possible to pass continuously from the pair $(P_\alpha, P_{\alpha'})$ to the pair  $(Q_\alpha, Q_{\alpha'})$ in such a way that the slopes of the lines joining the intermediate pairs stay at bounded distance from $r_1$ and $r_2$. We then conclude by taking the limit.\end{proof}

\section{Lipschitz saturation and Zariski saturation} \label{section5}
Let $R\subset A$ be a parametrization of a complex analytic algebra $A$, and let $X\to S$ be the  associated germ of  morphism of analytic spaces. Zariski defines a {\it domination} \index{domination} relation  between fractions of $A$  which, translated into transcendental   terms, can be formulated as follows: 
\begin{definition} {\it $f$ dominates $g$ over $R$} $(f \underset{R}{>}g)$ if and only if, for every pair $g_\beta(x), g_{\beta'}(x)$ of distinct determinations of $g$, considered as a multivalued function of $x \in S$, the quotient
\[\frac{f_\beta(x)-f_{\beta'}(x)}{g_\beta(x)-g_{\beta'}(x)}\]
 has bounded module, where $f_\beta(x)$ and $f_{\beta'}(x)$ denote the corresponding determinations of $f$.

An extension $B$ of $A$ in its total ring of fractions is said {\it saturated over $R$} if every fraction which dominates an element of $B$ belongs to $B$.

The  {\it saturated  algebra of $A$} (with respect to $R$) 
is defined as the smallest saturated algebra containing $A$.
\end{definition}
{\bf Question 3.} Is there a relation between the saturated algebra in the sense of Zariski and the algebra $\widetilde{A}^R$ defined in Section \ref{section3}? 
\vskip0,3cm
In the particular case of hypersurfaces, $A=R[y]$, we can easily see that  the Zariski saturation coincides with the set of fractions  which  dominate $y$, i.e., in this case, with the algebra $\widetilde{A}^R$ of Lipschitz fractions relative to the parametrization $R$.

In the  general case of an arbitrary codimension, $A=R[y_1,\dots,y_k]$, the   Zariski saturation and the Lipschitz saturation are  both more complicated to define, and  answering  Question 3 does not seem easy to us.

In some cases, including the case of hypersurfaces, Zariski can prove that his saturation is independant of the chosen parametrization as long as the latter is generic. Therefore, we obtain, in the case of hypersurfaces, a  positive answer to Questions 2 and 2' of Section \ref{section3}. More precisely, we have:

\begin{theorem} \label{theorem3}\footnote{(Added in 2020) For a more algebraic approach, see \cite{Li75a} and {Li75b}. For a more general result without the hypersurface assumption, see \cite{Bog74} and  \cite{Bog75}.}
Let $A$ be the complex analytic algebra of a hypersurface germ $X$, and consider the (generic) family $\mathcal{P}$ of the parametrizations defined by a direction of projection transversal to $X$ (i.e., not belonging to the tangent cone) at a generic point of each irreducible component of codimension $1$ of the singular locus. Then, for every $R\in \mathcal{P}$, $\widetilde{A}= \widetilde{A}^R$, which equals the Zariski saturation.
\end{theorem}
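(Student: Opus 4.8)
The plan is to prove the three asserted equalities of Theorem \ref{theorem3} in a chain: first $\widetilde{A}^R = \text{(Zariski saturation of } A \text{ over } R)$ for $R \in \mathcal{P}$, then the Zariski saturation over $R$ is independent of $R \in \mathcal{P}$, and finally $\widetilde{A} = \widetilde{A}^R$. The first equality is essentially already granted in the discussion preceding the statement: for a hypersurface $A = R[y]$, the Zariski saturation of $A$ over $R$ equals the set of fractions dominating $y$, and by Definition of domination this set is exactly the set of $f \in \overline{A}$ such that $(f\widearc{\otimes}1 - 1\widearc{\otimes}f)/(y\widearc{\otimes}1 - 1\widearc{\otimes}y)$ is bounded on every fiber of $X/S$, which by the relative majoration machinery (Theorem \ref{thm:majoration 2} applied to the ideal $(y\widearc{\otimes}1-1\widearc{\otimes}y)$ over the relative normalized blow-up, i.e. the relative analog of Corollary \ref{cor:CorollaryM2}) is precisely $\widetilde{A}^R$. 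So I would devote one paragraph to spelling out that $y\widearc{\otimes}1-1\widearc{\otimes}y$ generates the relative diagonal ideal $I_{A(B)}^{\mathrm{rel}}$ up to the maximal-ideal generators of $R$, which contribute zero to $f\widearc{\otimes}1-1\widearc{\otimes}f$, so that the relative version of Theorem \ref{thm:Theorem0} gives exactly the domination-of-$y$ characterization.

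Next I would invoke Theorem \ref{theorem2}, the main theorem of Section \ref{section3}: for any generic family $\mathcal{P}$ of parametrizations one has $\widetilde{A} = \bigcap_{R \in \mathcal{P}} \widetilde{A}^R$. The family described in the statement — projections transversal to $X$ at a generic point of each codimension-$1$ component of the singular locus — is a generic family in the sense of Section \ref{section3}, because by Theorem \ref{theorem 1H} the non-trivial confluence loci of a hypersurface are exactly these codimension-$1$ components, and the proof of Theorem \ref{theorem2} (through Lemmas \ref{Lemme1}--\ref{Lemme5}) shows that transversality at a generic point of each such component is precisely the condition making the intersection argument work. Hence $\widetilde{A} = \bigcap_{R \in \mathcal{P}} \widetilde{A}^R \subset \widetilde{A}^R$ for each individual $R \in \mathcal{P}$. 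For the reverse inclusion $\widetilde{A}^R \subset \widetilde{A}$ for a single generic $R$ — which is the genuinely new content — I would use the plane-curve analysis of Section \ref{section4} fibered over $S$: by the remark following Theorem \ref{thm:1Hrelative}, the relative confluence loci of $X/S$ for $R$ generic are again the codimension-$1$ components of the singular locus, and the local product structure of Proposition \ref{proposition1-IV} (which computes the multiplicities $\mu(\tau)$ from Puiseux-type data transverse to $S$) shows that the relative Lipschitz condition along $D_{X/S}$ forces the absolute one along $D_X$ at a generic point of each $^\tau D_X^{\red}$, hence everywhere since $D_X$ is a divisor in the normal space $E_X$.

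The main obstacle, and the step where I expect to have to work hardest, is this last reverse inclusion: showing that for a \emph{single} generic $R$ the relative saturation already captures everything, rather than needing to intersect over all of $\mathcal{P}$. Concretely the issue is to verify that the transversal slice $E_{X/S(a)}^* = \tilde z^{-1}(\mathbf{P}^{N-n-1}(a))$ produced by Lemma \ref{Lemme2} meets $^\tau\Delta_f$ and is transversal to it \emph{for the specific projection directions prescribed in the statement} (a generic point of each codimension-$1$ singular component), and that along such a slice the equality $\nu(\tau) \geq \mu(\tau)$ of the proof of Theorem \ref{theorem2} is already forced. This is where Zariski's theorem that his saturation is parametrization-independent for hypersurfaces enters essentially: it guarantees that $\widetilde{A}^R$ does not actually depend on $R \in \mathcal{P}$, so the single-$R$ value coincides with the intersection. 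I would therefore structure the argument so that the genuinely delicate analytic input — the curve-transversal Puiseux computation of Section \ref{section4} together with Zariski's independence statement for the algebroid-curve saturation — is quoted, and the rest reduces to assembling the diagram chase $\widetilde{A} \subset \widetilde{A}^R = \mathrm{Sat}_R(A) = \mathrm{Sat}_{R'}(A)$ for all $R, R' \in \mathcal{P}$, together with $\bigcap_{R\in\mathcal{P}}\widetilde{A}^R = \widetilde{A}$ from Theorem \ref{theorem2}, which pinches all the algebras between $A$ and $\overline{A}$ in the chain to a single common value.
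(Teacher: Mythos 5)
Your final paragraph is essentially the paper's argument, and it is correct: for a hypersurface $A=R[y]$ the Zariski saturation over $R$ is the set of fractions dominating $y$, which is exactly $\widetilde{A}^R$ (as the discussion preceding the statement records); Zariski's invariance theorem (\cite[Theorem 8.2]{Zar68}) makes $\widetilde{A}^R$ independent of $R\in\mathcal{P}$; and Theorem \ref{theorem2} then collapses $\bigcap_{R\in\mathcal{P}}\widetilde{A}^R=\widetilde{A}$ to the common value $\widetilde{A}^R$. The detour in your middle paragraph — trying to prove the reverse inclusion $\widetilde{A}^R\subset\widetilde{A}$ for a single $R$ directly by fibering the plane-curve analysis of Section \ref{section4} over $S$ and invoking Proposition \ref{proposition1-IV} — is a red herring that the paper does not take: Section \ref{section4} concerns only reduced plane curves, is not used in the proof of this theorem, and the whole point of the pinching argument is that Zariski's independence result makes that reverse inclusion automatic rather than something to be proved by hand.
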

Indeed, this  family of parametrizations $\mathcal{P}$ is the one for which Zariski proves the invariance of his saturation (\cite[Theorem 8.2]{Zar68}).

\section{Equisaturation and Lipschitz equisingularity}  \label{section6} \index{saturation!Lipschitz}
 The notion  of saturation used in this section  is the Lipschitz saturation which, as we have just seen, coincides with the Zariski saturation in the case of hypersurfaces.

Let $r:X\to T$ be an analytic retraction of a reduced complex analytic space germ $X$ on a germ of  smooth subvariety $T \hookrightarrow X$.  Denote by $X_0= r^{-1}(0)$ the fiber of this retraction over the origin $0\in T$.
\begin{definition}\label{definition1} We say that $(X,r)$ is {\it equisaturated along $T$} \index{equisaturation} if the saturated germ $\widetilde{X}$ admits a product structure: $$\widetilde{X}=\widetilde{X}_0 \times T$$
compatible with the retraction $r$ (i.e., such that the second projection is $\widetilde{X}\rightarrow X \xrightarrow[]{r} T$).
\end{definition}

\begin{theorem} \label{theorem4}
If $(X,r)$ is equisaturated along $T$, then $(X,r)$ is topologically (and even Lipschitz) trivial along $T$. \index{triviality!topological}
\end{theorem}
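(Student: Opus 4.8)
The plan is to convert the algebraic product structure on the saturation $\widetilde X = \widetilde X_0 \times T$ into an honest ambient topological (indeed Lipschitz) trivialization of $(X,r)$ along $T$. The key point established in the preceding sections is that the canonical morphism $\widetilde X \to X$ is bimeromorphic, a homeomorphism, and has a Lipschitz inverse; so $\widetilde X$ and $X$ have the same underlying (multi-)germ and the same outer metric up to bi-Lipschitz equivalence. Hence it suffices to produce a Lipschitz trivialization of $\widetilde X$ compatible with $r$, and then transport it back to $X$ via the homeomorphism, noting that composing Lipschitz maps with the Lipschitz homeomorphism $\widetilde X \to X$ and its Lipschitz inverse preserves the Lipschitz property.

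First I would make the product structure explicit: equisaturation gives an isomorphism of analytic germs $\varphi : \widetilde X_0 \times T \xrightarrow{\sim} \widetilde X$ over $T$, where $T$ is smooth, say $T \cong (\mathbf C^k,0)$. Translation in the $T$-factor, $(\xi, t) \mapsto (\xi, t+t')$, is a one-parameter (indeed $k$-parameter) group of analytic automorphisms of $\widetilde X_0 \times T$, obviously Lipschitz with Lipschitz inverse and compatible with the second projection. Transporting this family of automorphisms through $\varphi$ gives a family $\Phi_{t'}$ of Lipschitz self-homeomorphisms of $\widetilde X$ commuting with $r$ and moving the fiber over $0$ to the fiber over $t'$; in particular $\widetilde X$ is, over $T$, the trivial family $\widetilde X_0 \times T$ as a germ of Lipschitz-trivial family. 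Then I would push this through the homeomorphism $n:\widetilde X \to X$: the map $X \xrightarrow{\ n^{-1}\ } \widetilde X \cong \widetilde X_0 \times T \cong X_0 \times T$, where the last identification uses that $n$ restricts to a Lipschitz homeomorphism $\widetilde X_0 \to X_0$ (apply Theorem~\ref{theorem 1} / the bimeromorphic-with-Lipschitz-inverse property fiberwise, using that $r$ is compatible with the product), realizes $(X,r)$ as the trivial family $(X_0 \times T, \mathrm{pr}_2)$ up to a bi-Lipschitz homeomorphism. Topological triviality is then immediate, and Lipschitz triviality follows because every map in the composition is Lipschitz with Lipschitz inverse.

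The main obstacle I expect is the fiberwise control: one must check that the identification $\widetilde X_0 \times T \cong X_0 \times T$ coming from the product structure is compatible, fiber by fiber, with the canonical morphisms $\widetilde X \to X$ and $\widetilde X_0 \to X_0$ — i.e., that the product decomposition of $\widetilde X$ is ``centered'' on the fibers of $r$ in the right way, which is exactly the content of the clause ``compatible with the retraction $r$'' in Definition~\ref{definition1}. Concretely, one needs that for each $t$ the analytic isomorphism $\widetilde X_0 \times \{t\} \to \widetilde X \cap r^{-1}(t) = \widetilde{X_t}$ is compatible with normalization and with the Lipschitz-inverse property, so that it descends to a Lipschitz homeomorphism $X_0 \to X_t$ with uniformly bounded Lipschitz constants as $t \to 0$. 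Uniformity of the Lipschitz constant over the base is the only genuinely quantitative issue; it follows from the fact that the whole construction is a germ of analytic map on $\widetilde X_0 \times T$ and analytic maps are locally Lipschitz with locally bounded constants, so shrinking $T$ gives a uniform bound. Once this uniformity is in hand, assembling the $\Phi_{t'}$ into a single Lipschitz trivialization $X \cong X_0 \times T$ is routine.
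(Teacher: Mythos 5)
Your strategy — exploiting the analytic product structure $\widetilde X = \widetilde X_0 \times T$ by translation in the $T$-factor and pushing through the Lipschitz homeomorphism $n:\widetilde X \to X$ — is conceptually aligned with the paper's, and it does establish the \emph{intrinsic} (Lipschitz) triviality of $X$ over $T$. The difficulty is that the paper's notion of topological (and Lipschitz) triviality, spelled out just after the theorem, is \emph{ambient}: for every embedding of the retraction $r$ into a retraction $r^N$ of $\mathbf C^N$, the \emph{pair} $(\mathbf C^N, X)$ must be homeomorphic (bi-Lipschitzly) to $(\mathbf C^{N-p}\times T, X_0\times T)$ compatibly with $r^N$. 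Your construction produces a bi-Lipschitz homeomorphism of $X$ itself, but says nothing about extending it to the ambient Euclidean space, and such an extension is not automatic — a Lipschitz self-homeomorphism of a subset of $\mathbf C^N$ need not be the restriction of a Lipschitz self-homeomorphism of $\mathbf C^N$.

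The paper closes this gap by passing to the infinitesimal version of your translation: the vector fields $\triangledown_i$ on $\widetilde X$ (the ones generating translation in the $T$-factor) restrict to derivations $A \to \widetilde A$, so the coefficient functions $\triangledown_i z_j$ are germs of \emph{Lipschitz functions on $X$}. These scalar Lipschitz functions admit Lipschitz extensions $g_{i,j}$ to all of $\mathbf C^N$ (McShane/Kirszbraun extension — this is the step your argument has no analogue of). One then forms the ambient Lipschitz vector fields
\[
g_{i,1}\frac{\partial}{\partial z_1}+\cdots+g_{i,N-p}\frac{\partial}{\partial z_{N-p}}+\frac{\partial}{\partial t_i},
\]
which are tangent to $X$ and project onto $\partial/\partial t_i$ on $T$, and integrates them. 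This is exactly why the paper works at the level of vector fields rather than global automorphisms: the Lipschitz extension theorem operates on scalar functions (vector-field coefficients), not on homeomorphisms. To repair your argument you would need to differentiate your $\Phi_{t'}$, identify the resulting generating vector fields as having Lipschitz components on $X$, extend those components to $\mathbf C^N$, and integrate — at which point you have reproduced the paper's proof.
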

By {\it topological triviality}, we mean the following property: for every embedding 
  $$\xymatrix { X  \ \   \ar[rd]_r  \ar@{^{(}->}[r] &\ \ {\mathbf C^N}  \ar[d]^{r^N} \\
 & T 
 }$$
of  the retraction $r$ in a retraction $r^N$  of a euclidean space, the pair $(\mathbf{C}^N,X)$ is homeomorphic to the product $(\mathbf{C}^{N-p}\times T, X_0\times T)$, in a compatible way with the retraction $r^N$.

By {\it Lipschitz triviality}, \index{triviality!Lipschitz} we mean that the above homeomorphism  is Lipschitz as well as its inverse.

\begin{proof}
Let $(t_1,\dots, t_p)$ be  a local coordinates system on $T$. By using the  product  structure  $\widetilde{X}=\widetilde{X_0} \times T$,  let us  denote by $\triangledown_i$ the vector field on $\widetilde{X}$ whose  first projection is zero and  whose  second one equals  $\frac{\partial}{\partial t_i}$. Let $A$ be the algebra of $X$;  $\triangledown_i$ is a derivation from $\widetilde{A}$ to $\widetilde{A}$. Then, by restriction, it defines a derivation from $A$ to $\widetilde{A}$. Let us  consider an embedding  $X  \hookrightarrow \mathbf{C}^N$, i.e.,  a system of $N$ generators of the maximal ideal of $A$: 
\[(z_1,z_2,\dots,z_{N-p},t_1\circ r , t_2\circ r,\dots,t_p\circ r)\]
(by a change of coordinates, all the systems can be reduced to this form). The functions $\triangledown_i z_1,\triangledown_i z_2\dots,\triangledown_i z_{N-p}$ are Lipschitz functions on $X$.  Then, they can extend to Lipschitz functions $g_{i,1}, g_{i,2}, \ldots, g_{i,N-p}  $ on all $\mathbf{C}^N$. Hence,   for all $i=1,2,\dots,p$,   we have  a Lipschitz vector field on $\mathbf{C}^N$:
\[g_{i,1}\frac{\partial}{\partial z_1}+g_{i,2}\frac{\partial}{\partial z_2}+\dots+g_{i,N-p}\frac{\partial}{\partial z_{N-p}}+\frac{\partial}{\partial t_i}\]
which is tangent to $X$ and    projects onto the vector field $\frac{\partial}{\partial t_i}$ of $T$.
Since they are  Lipschitz, these vector fields are locally integrable and their integration realizes the topological triviality of $X$.
\end{proof}

\section*{Relative equisaturation}
We will now define   a relative notion of equisaturation. \index{equisaturation!relative} Let $X/S$ be  a germ of analytic space  {\it relative to a parametrization},  consisting of the data of a reduced analytic germ $X$ of pure dimension $n$ and of  the germ of a finite morphism $X\to S$ on a germ of smooth variety of dimension $n$. Let 
\[r: X/S\rightarrow T\] 
be an {\it analytic retraction of the relative analytic space $X/S$ on a smooth subvariety $T$}.  By this, we mean the datum of a commutative diagram: 
 $$\xymatrix {  &&\\
 T  \ \    \ar@/^2pc/[rr]^{identity}  \ar@{^{(}->}[r]  \ar@{^{(}->}[rd] & X   \ar[d]  \ar[r]^r &T  \\
 &  S \ar[ru]  &  
 }$$
 Denote by $X_0/S_0$  the relative analytic space defined as the  inverse image of  the point $0\in T$ by this retraction.
\begin{definition}[Relative Definition \ref{definition1}] We say that $(X/S,r)$ is {\it equisaturated  along $T$}, if the germ of relative saturated  space $\widetilde{X}^S/S$ admits a  product structure: 

 $$\xymatrix { \widetilde{X}^{S} \ar[d]   &=& \widetilde{X_0}^{S_0} \ar[d] &\times& T \ar[d]^{id}\\
 S&=&S_0&\times & T
 }$$
which is compatible with the retraction $r$.
\end{definition}
In the case where $X$ is a hypersurface, it results  immediately from Theorem  \ref{theorem3} that if $S$ is a generic parametrization,  the equisaturation of $X/S$ ({\it relative equisaturation})  implies the equisaturation of $X$ ({\it absolute equisaturation}).
\vskip0,3cm\noindent
{\bf Question 4.} Conversely, does the equisaturation of $X$ imply the existence of a generic parametrization $S$ such that $X/S$ is equisaturated? 
\\ 

It would be interesting to know the answer  to this question because  the  work of Zariski gives a lot of informations on the relative notion of equisaturation. 

We assume in the sequel that $X$ is a hypersurface. Let $R=\mathbf{C}\{z_1,\dots,z_n\}$ be a parametrization of $A$.
We can write: 
\[A=R[y]=R[Y]/(f),\] 
where $f$ is a   reduced monic polynomial in $Y$ with coefficients in $R$ and where  $y= Y+(f)$ is the residue class of $Y$ modulo $f$. The reduced discriminant of this polynomial generates  an ideal in $R$ which  depends only of $A$ and $R$; we will call it  the {\it ramification ideal}   \index{ramification!ideal} of the  parametrization $R$. We will denote by   $\Sigma \subset S$ the subspace defined by this ideal;  this subspace will be called the {\it  ramification locus} \index{ramification!locus} of $X/S$.
\begin{definition}
We say that $(X/S,r)$ has  {\it trivial ramification locus  along $T$} if the pair $(S,\Sigma)$ admits a product structure:
 $$\xymatrix {\Sigma \ar@{^{(}->}[d]   &=& \Sigma_0 \ar@{^{(}->}[d] &\times& T \ar[d]^{id}\\
 S&=&S_0&\times & T
 }$$
which is compatible with the retraction $r$.
\end{definition}

\begin{theorem}[Zariski  \cite{Zar68} ]  \label{theorem5}
Let $X$ be a hypersurface. The following two properties are equivalent:
\begin{enumerate}
    \item[(i)] $(X/S,r)$ is equisaturated along $T$;
    \item[(ii)] $(X/S,r)$ has trivial ramification locus.
    \end{enumerate}
Moreover,  these two properties imply the topological triviality along $T$ of the hypersurface $X$. 

\end{theorem}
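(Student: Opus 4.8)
The plan is to prove Theorem~\ref{theorem5} by reducing the equivalence of (i) and (ii) to a statement about the Puiseux expansions of the roots of $f$, using the explicit description of the relative saturation obtained in Sections~\ref{section4} and~\ref{section5}, and then to derive the triviality assertion from the relative version of Theorem~\ref{theorem4}. I would organize the argument around the following steps.

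\textbf{Step 1: Reduce to a one-parameter family.} By induction on $\dim T$ it suffices to treat the case $T \cong \mathbf{C}$, with coordinate $t$. After a linear change of coordinates compatible with $r$ we may write $R = \mathbf{C}\{z_1,\dots,z_{n-1}\}\{t\}$, so that $S = S_0 \times T$ and $f = f(Y; z', t)$ is a reduced monic polynomial in $Y$ over $R$. The fibre $X_0/S_0$ corresponds to $f_0 = f(Y; z', 0)$.

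\textbf{Step 2: Identify equisaturation with a condition on the reduced discriminant.} Using the hypersurface case discussed just after Question~3 in Section~\ref{section5}, $\widetilde{A}^{S} = \widetilde{A}^{R}$ is the algebra of fractions dominating $y$ over $R$; by Theorem~\ref{theorem3}, for generic projection this is the Zariski saturation. The key point is Zariski's own structure theorem (\cite[Theorem 8.2]{Zar68}): the saturated ring $\widetilde{A}^{R}$ is, up to a compatible isomorphism, determined by the reduced discriminant ideal of $f$ in $R$ --- equivalently by the subspace $\Sigma \subset S$ together with the pattern of contact exponents of the pairs of roots $y_{\beta}(z',t) - y_{\beta'}(z',t)$. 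Concretely, the saturation is generated over $A$ by the ``saturators'' built from these root differences, and Proposition~\ref{proposition1-IV} and the curve computation of Section~\ref{section4} (applied fibrewise over a generic parametrization of $\Sigma$) show that the multiplicities $\mu(\tau)$ of the divisors ${}^{\tau}D_X$ are exactly the contact exponents appearing in the reduced discriminant. Thus $\widetilde{X}^{S}/S$ admits a product structure $\widetilde{X_0}^{S_0}\times T$ compatible with $r$ if and only if both $(S,\Sigma)$ has a compatible product structure \emph{and} the contact exponents are constant along $T$. The second condition is in fact implied by the first: once $\Sigma = \Sigma_0\times T$ compatibly with $r$, the reduced discriminant, being a reduced equation of $\Sigma$, has its multiplicity data constant along $T$, because the contact exponents of the roots are read off from the orders of vanishing of the discriminant along the branches of $\Sigma$, which are locally trivial over $T$. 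This gives (ii)$\Rightarrow$(i). Conversely, a compatible product structure on $\widetilde{X}^{S}$ induces one on the branch locus of $\widetilde{X}^{S}\to S$, which is $\Sigma$, giving (i)$\Rightarrow$(ii).

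\textbf{Step 3: Derive topological (and Lipschitz) triviality.} Assuming (i), one has the product decomposition $\widetilde{X}^{S} = \widetilde{X_0}^{S_0}\times T$ compatible with $r$. Exactly as in the proof of Theorem~\ref{theorem4}, the vector fields $\triangledown_i = \partial/\partial t_i$ coming from this product structure are derivations from $\widetilde{A}^{S}$ to itself, hence by restriction derivations from $A$ to $\widetilde{A}^{S}$; applying them to a system of generators $(z_1,\dots,z_{n-1}, y, t\circ r)$ of the maximal ideal produces Lipschitz functions on $X$, which extend to Lipschitz functions on the ambient $\mathbf{C}^N$. This yields Lipschitz vector fields on $\mathbf{C}^N$ tangent to $X$ and projecting onto $\partial/\partial t_i$ on $T$; their flows, being Lipschitz, integrate to a Lipschitz (hence topological) trivialization of $(X,r)$ along $T$. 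Since $\widetilde{A}^{S}\subset\widetilde{A}$ only when the parametrization is sufficiently generic, here one uses that $\widetilde{A}^{S}$ still consists of Lipschitz-relative fractions (Relative Theorem~\ref{thm:Theorem0}); the bounded Lipschitz-over-$S$ vector fields suffice because they are in particular Lipschitz on $X$ as a subset of $\mathbf{C}^N$.

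\textbf{Main obstacle.} The delicate point is Step~2, specifically establishing that a compatible product structure on the reduced discriminant locus $\Sigma$ forces the \emph{full} saturation $\widetilde{X}^S$ to split as a product --- i.e.\ that no additional discrete invariants beyond $\Sigma$ enter. This is exactly the content of Zariski's saturation theory for hypersurfaces (his analysis of ``characteristic pairs'' of the discriminant and the fact that the saturated ring is rigid once the discriminant data are fixed), so I would either invoke \cite[Theorem 8.2]{Zar68} directly or, to keep the argument self-contained, reprove the needed fragment by combining the curve computation of Section~\ref{section4} (which identifies the $\mu(\tau)$ with characteristic-type exponents) with a fibrewise application of Proposition~\ref{proposition1-IV} over a generic curve transverse to $T$ inside $\Sigma$. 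The remaining verifications --- the inductive reduction, the coherence of $\widetilde{\mathcal O}_X$, and the integration of Lipschitz vector fields --- are routine given the machinery already set up in the earlier sections.
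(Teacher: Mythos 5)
The paper only proves the implication $(ii) \Rightarrow (i)$ and refers to Zariski for $(i)\Rightarrow(ii)$ and for the topological triviality with an arbitrary parametrization; for a generic parametrization the triviality is just Theorem~\ref{theorem3} combined with Theorem~\ref{theorem4}. Your proposal therefore attempts more than the paper does, but the route you take for the core implication is fundamentally different and, as you yourself flag, incomplete.

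The genuine gap is in your Step~2. You want to show that a compatible product structure on the \emph{reduced} ramification locus $\Sigma$ forces the full relative saturation $\widetilde{X}^{S}$ to split as a product, and you try to get this by arguing that the contact exponents of the roots are determined by $\Sigma$. That claim is unjustified as stated: $\Sigma$ is the \emph{reduced} discriminant locus, and the contact exponents are read from the multiplicities of the \emph{unreduced} discriminant (equivalently from the exponents $\mu(\tau)/m(\tau)$), which the reduced subspace does not record. Falling back on Zariski's structure theory (``the saturated ring is rigid once the discriminant data are fixed'') is not the content of \cite[Theorem 8.2]{Zar68}, which is about independence of the saturation from a generic parametrization. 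So your ``main obstacle'' is not a technical point to be filled in later; it is the heart of the matter, and your approach does not resolve it.

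The paper avoids the rigidity question altogether. Its proof of $(ii)\Rightarrow(i)$ is constructive, via Lemma~\ref{Lemma6}: any derivation $\triangledown$ of $R$ that stabilizes the ramification ideal extends canonically to a derivation of $\widetilde{A}^{R}$ into itself. The key computation is local at a generic point of a component of $\Sigma$: choosing coordinates so that $x=0$ cuts out $\Sigma$, one only needs to check the relative Lipschitz condition for $x\,\partial/\partial x$ and $\partial/\partial t_i$ on the Puiseux-type expansions $^\tau\Delta g = b_\tau(t)\,x^{\mu(\tau)/m(\tau)}+\cdots$, and both derivations visibly preserve this form. No claim about constancy of exponents along $T$ is needed, because one works on a fixed divisor $^\tau D_{X/S}^{\red}$ where $\mu(\tau)$ and $m(\tau)$ are already fixed. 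Then the vector fields $\partial/\partial t_i$ on $T$, being tangent to $\Sigma=\Sigma_0\times T$ by hypothesis $(ii)$, lift to \emph{holomorphic} vector fields on $\widetilde{X}^{S}$, and their integration directly produces the product structure $\widetilde{X}^{S}=\widetilde{X_0}^{S_0}\times T$. This is shorter, self-contained, and bypasses the rigidity issue entirely.

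A further smaller issue: in your Step~3 you try to integrate vector fields built from relative Lipschitz fractions, but the trivialization mechanism of Theorem~\ref{theorem4} requires vector fields on the ambient $\mathbf{C}^N$ that are Lipschitz in the absolute sense; relative Lipschitz control over $S$ does not by itself give that. The paper handles this by invoking Theorem~\ref{theorem3} (so that $\widetilde{A}^{S}=\widetilde{A}$ for a generic $S$) and explicitly defers the case of an arbitrary parametrization to Zariski. Your reduction to a one-parameter family in Step~1 is harmless but not needed; the paper's argument works directly with $p$-dimensional $T$.
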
 
Notice that in the case of a generic parametrization, where the relative equisaturation implies the absolute equisaturation, the last  part  of Theorem \ref{theorem5} is a simple Corollary of  our Theorem \ref{theorem3}. But Zariski proves Theorem \ref{theorem5}  for any parametrization. 

We will limit ourselves to the proof of  the implication $(ii) \Rightarrow (i)$ and we refer  to   \cite{Zar68} for the rest. 
\begin{lemma} \label{Lemma6}
Every derivation of the ring $R$ in itself which leaves stable the ideal of ramification  extends   canonically into a derivation of the relative saturation $\widetilde{A}^R$ in itself.
\end{lemma}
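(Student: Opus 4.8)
The plan is to work with the concrete presentation $A=R[Y]/(f)$, $\overline{A}=\bigoplus_\alpha \overline{A}_\alpha$, and to exploit the fact that a derivation $\delta\colon R\to R$ stabilizing the ramification ideal is precisely a vector field on $S$ tangent to the ramification locus $\Sigma$. First I would recall that, by Zariski's lifting theorem for derivations along a divisor on a normal space (\cite[Theorem 2]{Zar65-a}, already invoked in the proof of Proposition \ref{proposition1-IV}), such a $\delta$ lifts canonically to a derivation of the normalization $\overline{A}$: indeed the normalized blow-up producing $\overline{A}$ is, locally, an iterated normalized blow-up of ideals whose cosupport lies over $\Sigma$, and tangency to $\Sigma$ guarantees the lift survives each stage. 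Concretely, on each branch $\overline{A}_\alpha=\mathbf{C}\{x^{1/n_\alpha}\}$ (after a parametrization-adapted choice of coordinate $x$ on $S$) the vanishing of $\delta$-action on the discriminant forces $\delta$ to extend with $\delta(x^{1/n_\alpha})$ having the right order, so one gets a well-defined derivation $\overline\delta$ of $\overline{A}$. This $\overline\delta$ is canonical because the lift in \cite{Zar65-a} is unique.

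Next I would show that $\overline\delta$ stabilizes the subalgebra $\widetilde{A}^R\subset\overline{A}$. By the relative version of Theorem \ref{thm:Theorem0}, $\widetilde{A}^R$ consists of those $f\in\overline{A}$ for which $(f\widearc\otimes 1-1\widearc\otimes f)\,\vert\,D_{X/S}=0$, equivalently (by Subsection \ref{subsec:P4}) $f\widearc\otimes 1-1\widearc\otimes f\in\overline{I_{A}^{\,S}}$, the integral closure of the ideal defining $\overline{X}\times_X\overline{X}$ inside $\overline{X}\times_S\overline{X}$. The derivation $\delta$ on $R$, together with its lift $\overline\delta$ on $\overline{A}$, induces a derivation $\overline\delta\widearc\otimes 1+1\widearc\otimes\overline\delta$ on $\overline{A}\widearc\otimes_R\overline{A}$; I claim this derivation stabilizes $\overline{I_A^{\,S}}$. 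For the ideal $I_A^{\,S}$ itself this is the statement that $\overline\delta$ on each factor is $R$-linear modulo the relations coming from $R$, i.e. that $\delta$ acts compatibly on the base — which is exactly the tangency hypothesis re-read on $\overline{X}\times_S\overline{X}$, since the fiber product over $S$ is cut out using only relations involving $R$. Passing from $I_A^{\,S}$ to its integral closure is then the general principle that any derivation stabilizing an ideal $J$ (and the ambient normal ring) stabilizes $\overline{J}$: differentiate an integral dependence relation $h^k+j_1h^{k-1}+\cdots+j_k=0$ to get an integral dependence relation (of degree $2k-1$, say) for $\overline\delta(h)$ over $J$, using $\delta(j_m)\in J^m$. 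Hence for $f\in\widetilde{A}^R$ we conclude $\overline\delta(f)\widearc\otimes 1-1\widearc\otimes\overline\delta(f)\in\overline{I_A^{\,S}}$, so $\overline\delta(f)\in\widetilde{A}^R$.

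Finally, the restriction of $\overline\delta$ to $\widetilde{A}^R$ is the asserted canonical extension; uniqueness follows from uniqueness of the lift to $\overline{A}$ together with the fact that $\widetilde{A}^R$ generates $\overline{A}$'s total ring of fractions, so any two derivations of $\widetilde{A}^R$ restricting to $\delta$ on $R$ and agreeing after the (forced) further extension to $\overline{A}$ must coincide. I expect the main obstacle to be the differentiation-of-integral-dependence step made uniform: one must check that $\overline\delta(J^m)\subset J^m$ (not merely $\subset J$), which is where the precise form of the lift on the blown-up chart — $\overline\delta$ has no pole along the exceptional divisor, granted by tangency to $\Sigma$ — is genuinely used; the rest is bookkeeping. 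A secondary point needing care is that "derivation of $R$ stabilizing the ramification ideal" is exactly the hypothesis of Zariski's lifting theorem in this relative setting, which one should state explicitly before invoking it.
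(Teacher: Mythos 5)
Your proposal takes a genuinely different route from the paper, replacing the paper's local Puiseux computation at generic points of the discriminant with an abstract argument about derivations and integral closures of ideals. Unfortunately the central step of your argument is circular. In the hypersurface case $A=R[y]$, the ideal $I_A^{\,S}=\ker(\overline{A}\widearc\otimes_R\overline{A}\to\overline{A}\widearc\otimes_A\overline{A})$ is generated by the single element $y\widearc\otimes 1-1\widearc\otimes y$. For $D=\overline\delta\widearc\otimes 1+1\widearc\otimes\overline\delta$ to stabilize $I_A^{\,S}$ you would need $D(y\widearc\otimes 1-1\widearc\otimes y)=\overline\delta(y)\widearc\otimes 1-1\widearc\otimes\overline\delta(y)$ to lie in $(y\widearc\otimes 1-1\widearc\otimes y)$, i.e.\ you would need $\overline\delta(y)\in A$; but $\overline\delta(y)=\triangledown y=-\bigl(\sum_i\partial_{z_i}f\cdot\triangledown z_i\bigr)/\partial_y f$ lies a priori only in $\tot(A)$ (the normalization at best). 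And the weaker statement that $D$ stabilizes $\overline{I_A^{\,S}}$ amounts, for the generator, precisely to $\overline\delta(y)\widearc\otimes 1-1\widearc\otimes\overline\delta(y)\in\overline{I_A^{\,S}}$, i.e.\ to $\triangledown y\in\widetilde{A}^R$ — which is the lemma itself in the special case $g=y$. So the "$D$ stabilizes $\overline{I_A^{\,S}}$" claim is not a reduction of the problem, it \emph{is} the problem, and the phrase "the tangency hypothesis re-read on $\overline{X}\times_S\overline{X}$" does not unpack to a proof: tangency to $\Sigma$ is a condition on the discriminant $\partial_y f$ downstairs in $S$, and it does not by itself imply $\overline\delta(A)\subset A$ or $\overline\delta(A)\subset\widetilde{A}^R$.

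The paper avoids this loop entirely. It never needs an a priori lift $\overline\delta\colon\overline A\to\overline A$; it extends $\triangledown$ formally to $\tot(A)$, notes from the Leibniz formula that the polar locus of $\triangledown g$ lies inside $\{\partial_y f=0\}$ for any $g\in\widetilde A^R$, and then invokes Theorem~\ref{thm:1Hrelative} to reduce the relative Lipschitz check to a generic point of each codimension-one component of that locus. There the tangency hypothesis enters in a concrete way: choosing coordinates $(x,t_1,\dots,t_{n-1})$ on $S$ with $\Sigma=\{x=0\}$, the $R$-module of $\delta$'s preserving $(x)$ is generated by $x\,\partial_x$ and the $\partial_{t_i}$, and on $E_{X/S}$ one has $x=s^{m(\tau)}$, so $x\,\partial_x$ pulls back to the logarithmic field $\tfrac{1}{m(\tau)}s\,\partial_s$; applying this and $\partial_{t_i}$ to the expansion ${}^\tau\Delta g=b_\tau(t)x^{\mu(\tau)/m(\tau)}+\cdots$ preserves the lower bound $\mu(\tau)/m(\tau)$ on exponents, which is exactly the relative Lipschitz condition. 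That explicit computation is what your proposal is missing: the abstract stabilization claim would have to be proved by essentially that calculation anyway. (Your secondary worry about "differentiate the integral dependence relation" is also well founded — passing a derivation across $J\mapsto\overline J$ is a Seidenberg-type fact about the normalized Rees algebra, not an elementary manipulation — but the circularity above is the more fundamental obstruction.)
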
 

\begin{proof} 
Since $A$ is finite over $R$, every derivation 
$\triangledown \colon R\to R$ admits a canonical extension to the ring of fractions of $A$. Explicitly, we have: 
\[\triangledown y=-\big(\sum_{i=1}^{n}\frac{\partial f}{\partial z_i}
\triangledown  z_i \big)/\frac{\partial f}{\partial y}.\]
We have to prove  that under the  hypothesis of the lemma, $\triangledown g \in \widetilde{A}^R$ for every $g\in \widetilde{A}^R$. But the polar locus of every $g \in \overline{A}$ is obviously included in the singular locus  of $X$, so, in the zero locus of $\frac{\partial f}{\partial y}$. By writing
\[\triangledown g=\frac{\partial g}{\partial y}\triangledown y+\sum_{i=1}^{n}\frac{\partial g}{\partial z_i}\triangledown z_i,\]
we deduce from this   that the polar locus of $\triangledown g$ is included in the zero locus of $\frac{\partial f}{\partial y}$.

In order to check that $g\in \widetilde{A}^R$,  it is then sufficient (by Theorem \ref{thm:1Hrelative}) to check it at a generic point of each irreducible component (of codimension 1, of course) of the zero locus   of $\frac{\partial f}{\partial y}$.

Let $^S X$ be such an irreducible component, restricted to a small neighbourhood of one of its points.  For a generic choice of  the point, we can assume that: 
\begin{enumerate}
\item [(1)]  $^S X$ is smooth and the restriction to $^S X$ of  the morphism: $X\to S$ is an embedding; 
    \item [(2)] $^S X=(X|\Sigma)^{\red}$, where $\Sigma \subset S$ denotes the image of $^S X$, i.e., the ramification locus  of the morphism $X\to S$;
    \item [(3)]  the finite cover $D_{X/S}^{\red} \to\  ^SX$ is \'etale, i.e.,  $D_{X/S}^{\red}$ is a  disjoint union of\footnote{(added in 2020) \dots open subsets of components $^\tau D_{X/S}^{\red}$, isomorphic to their image in \dots} components $^\tau D_{X/S}^{\red}$ isomorphic to $^S X$.
\end{enumerate}

By  (1), $\Sigma$ is a smooth divisor of $S$ and we can choose local coordinates   $(x,t_1,t_2,\dots,t_{n-1})$   in $S$ so that $x=0$ is a local equation of this  divisor.  (2) means that $x$ does not vanish outside of $^S X $.

Locally in $S$, the submodule of the derivations which leave stable  the ramification ideal $(x)$ is generated by $x\frac{\partial}{\partial x}$ and the  $\frac{\partial}{\partial t_i}$'s.

Therefore, it suffices to prove that the functions $x\frac{\partial g}{\partial x}$ and $\frac{\partial g}{\partial t_i}$ are  Lipschitz  relatively to $S$.

Consider   the space $E_{X/S}$, that we can assume to be smooth,  in a  neighbourhood of one of the components $^\tau D_{X/S}^{\red}$ of the \'etale cover of (3). The function $x$ is well defined on $E_{X/S}$ (by composition with the canonical morphism $E_{X/S}\to \overline{X} \underset{S}{\times} \overline{X} \to S$) and by (2), it does not vanish outside of $D_{X/S}^{\red}$.
Therefore it is of the form $x= s^{m(\tau)}$, where $s=0$ is an irreducible equation of the smooth divisor $^\tau D_{X/S}^{\red}$.
On the other hand, the ideal of the non-reduced divisor $^\tau D_{X/S}$ is generated by: 
\[^\tau\Delta y=y\otimes 1 - 1 \otimes y = a_{\tau}(t) s^{\mu(\tau)}+\cdots, \]
where $a_\tau$ must be a unit of the ring $\mathbf{C}\{t_1,t_2,\dots,t_{n-1} \}$ since $^\tau {\Delta y}$ vanishes only  on $^\tau D_X^{\red}$.

Thus, for every $\tau$, we have a series expansion whose terms are   increasing  powers of   $x^{1/m(\tau)}$ (compare to Section \ref{sec: plans curves}):
\[^\tau {\Delta y}=a_{\tau} (t) x^{\frac{\mu(\tau)}{m(\tau)}}+\cdots,\]
and a function $g$ will be Lipschitz relatively to $S$ if and only if for every $\tau$, the series expansion of $^\tau \Delta g$ into rational powers of $x$ has no terms with exponents less than $\mu(\tau)/m(\tau)$. Let $g$ be such  a function: 
\[^\tau {\Delta g}=b_{\tau} (t) x^{\frac{\mu(\tau)}{m(\tau)}}+\cdots\]
We have: 
\[^\tau \Delta(x\frac{\partial g}{\partial x})=x \frac{\partial}{\partial x}(^\tau \Delta g)=x\bigg( \frac{\mu(\tau)}{m(\tau)}b_\tau (t) x^{\frac{\mu(\tau)}{m(\tau)}-1}+\cdots \bigg) \]
and: 
\[^\tau \Delta(\frac{\partial g}{\partial t_i})= \frac{\partial}{\partial t_i}(^\tau \Delta g)=\frac{\partial b_\tau (t)}{\partial t_i}x^{\frac{\mu(\tau)}{m(\tau)}}+\cdots,\]
so that $x\frac{\partial g}{\partial x}$ and $\frac{\partial g}{\partial t_i}$ are still functions of the same type, i.e.,  Lipschitz functions relative to $S$. This completes the proof of Lemma \ref{Lemma6}
\end{proof}

\begin{proof}{\bf of $\mathbf{(ii)\Rightarrow (i)}$ of theorem \ref{theorem5}.}

\noindent
Let us choose  local coordinates  $(x_1,x_2, \dots,x_{n-p}, t_1,t_2, \dots,t_p)$ in $S$  compatible with the   product structure $S_0 \times T$. The vector field $\frac{\partial}{\partial t_i}$ is tangent to the ramification locus  $\Sigma=\Sigma_0 \times T$. Therefore,  by  Lemma \ref{Lemma6}, it lifts to a  holomorphic vector field $\triangledown_i$ on $\widetilde{X}^S$. The integration of these $p$ vector fields $\triangledown_1,\dots,\triangledown_p$ realizes the  desired product structure  on $\widetilde{X}^S$ .
\end{proof}

\section*{Speculation on equisingularity} \index{equisingularity}

We would like to find a "good" definition of equisingularity of $X$ along $T$, satisfying if possible  the two  following properties: 
\begin{description}
\item [(TT)] the equisingularity implies the topological triviality; \index{triviality!topological}
\item [(OZ)] the set of points of $T$ where $X$ is equisingular forms a dense Zariski open set.
\end{description}

Equisaturation satisfies (TT) (Theorem \ref{theorem3} above), but satisfies (OZ) only in the case where $\hbox{codim}_XT=1$ (equisaturation of a family of curves coincides with equisingularity). In the general case, one can find some $X \to T$ such that $X$ is not equisaturated at any point of  $T$\footnote{Here we are thinking  about the relative  equisaturation characterized (Theorem \ref{theorem5}) by the triviality of the  ramification locus. But likely, the notion of absolute  equisaturation  leads to about the same thing - don't we want to answer yes to  Question 4? \par\noindent Added in 2020: The approaches of E. B\"oger in \cite{Bog75} and J. Lipman in \cite{Li75b} would probably lead to a positive answer.}.

Zariski proposed  a definition to the equisingularity of hypersurfaces that generalizes the idea of trivialization  of the ramification locus  (\cite{Zar37}, \cite{Zar64}):  the hypersurface $X$ is {\it equisingular} \index{equisingularity!Zariski} along $T$ if, for a generic parametrization, the ramification locus $\Sigma$ is equisingular along (the projection of) $T$. Since  the codimension of $T$ in $\Sigma$ is smaller than  its codimension   in $X$ minus one, we therefore obtain  a definition of the equisingularity by induction on the codimension.\footnote{(Added in 2020) This theory was described by Zariski in \cite{Zar79}, \cite {Zar80}. The reason why equisaturation does not satisfy (OZ) in general is that it corresponds to a condition of analytical triviality of the discriminant, which of course does not satisfy (OZ) in general. See also \cite{LiT79}.}

This definition satisfies (OZ), but we do not know how to prove (TT)\footnote{(added in 2020) There are now several results where Zariski equisingularity implies topological triviality sometimes via the Whitney conditions. See \cite{Var73}, \cite{Spe75}.}. 

In the case where $T$ coincides with the singular locus of $X$ (family of analytic spaces with isolated singularities), Hironaka found a criterion of equisingularity which satisfies (TT) and (OZ) at the same time. This criterion is defined [Hir64] in terms of the normalized blow-up of an ideal (i.e., the product of the ideal of $T$ by the Jacobian ideal of $X$). The topological triviality is proved  by integrating a vector field \footnote{Cf. H. Hironaka (not published but see \cite{Hir64-b}).}, but: 
\begin{enumerate}
    \item instead of being holomorphic on $\overline{X}$, this vector field is differentiable (i.e., $ {C}^{\infty}$) on the blown-up space  $\widehat X$ of $X$ (the normalized blow-up of the ideal mentioned above).
    \item instead of being Lipschitz on $X$, i.e., satisfying a Lipschitz inequality  for every ordered pair of points in $X \times X$, this vector field   satisfies  a Lipschitz inequality only for the ordered pair of points in $T \times X$. 
\end{enumerate}

The general solution to the problem of equisingularity will maybe use some rings of this type of functions ($ {C}^{\infty}$ in a blown-up space and ``weakly Lipschitz" below).\footnote{(Added in 2020) The idea of considering vector fields which are differentiable on some blown-up space was used by Pham in \cite{Pha71a} and, in real analytic geometry by Kuo who introduced \textit{blow-analytic} equivalence of singularities; see \cite{Kuo85}.}

\section*{Appendix: stratification, Whitney's $(a)$-property and transversality}
 A {\it stratification} \index{stratification}\footnote{See also David Trotman's article "Stratifications, Equisingularity and Triangulation" in this volume.} of an analytic (reduced) space  $X$ is a locally finite partition of $X$ in smooth varieties called {\it strata}, such that: 
\begin{enumerate}
    \item the closure $\overline{W}$ of every stratum $W$ is an (irreducible) analytic space;
    \item the boundary $\partial W= \overline{W}\setminus W$ of every stratum $W$ is a union of strata.
\end{enumerate}

We call {\it star} of a stratum \index{stratum!star of a} $W$ the set of strata which have $W$ in their boundary. 

Let $(W_0,W)$ be  an ordered pair of strata, with $W_0 \subset  \partial W$. We say that this ordered pair satisfies the {\it property (a) of Whitney} \index{Whitney!(a)-property} at a point $x_0 \in W_0$ if for every sequence of points $x_i \in W$ tending to  $x_0$ in such a way that the tangent space $T_{x_i}(W)$ admits a limit, this limit contains the tangent space $T_{x_0}(W)$ (we suppose that $X$ is locally embedded in a Euclidean space, in such a way that   the tangent spaces are realized as subspaces of the same vector space; the property (a) of Whitney is independent of the chosen embedding). For every ordered pair of strata $(W_0,W)$ of a stratification, there exists a Zariski dense open set  of points of $W_0$ where the property (a) of Whitney is  satisfied \cite{Whi65}. We can then refine every stratification into a stratification such that the property (a) of Whitney is satisfied at every point for every ordered pair of strata. 
\begin{proposition} \label{proposition finale}
Let $(X,x_0)$ be a   stratified germ of complex analytic space such that the ordered pairs of strata $(W_0,W)$ satisfy the property (a) of Whitney, where $W_0$ denotes the stratum which contains $x_0$ and where $W$ is any stratum of the star of  $W_0$. Let $\varphi: X \to \mathbf{C}^m$ be a morphism germ  such that $\varphi{| W_0}$ is effectively transversal to the value  $0$ at the point $x_0$. Then, for every stratum $W$, $\varphi{| W}$ is effectively transversal to the   value $0$ at (at least) one point of $W$ arbitrarily close to $x_0$.
\end{proposition}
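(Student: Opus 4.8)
The plan is to argue by a limiting process along a sequence of points in $W$ converging to $x_0$, exploiting Whitney's property (a) to transfer the transversality from $W_0$ to $W$. First I would set up the linear algebra of effective transversality: saying that $\varphi_{|W_0}$ is effectively transversal to $0$ at $x_0$ means that $x_0 \in \varphi^{-1}(0)$ and that the differential $d_{x_0}(\varphi_{|W_0}) : T_{x_0}W_0 \to \mathbf{C}^m$ is surjective; in particular $\dim W_0 \geq m$ and the restriction of $d_{x_0}\varphi$ (the differential of an extension of $\varphi$ to the ambient Euclidean space, or to $X$) to $T_{x_0}W_0$ is already surjective. Since $W_0 \subset \overline{W}$, one can choose a sequence $x_i \in W$ with $x_i \to x_0$; after passing to a subsequence (using compactness of the relevant Grassmannian), the tangent spaces $T_{x_i}W$ converge to a limit plane $\mathcal{T}$, and Whitney's (a)-property gives $T_{x_0}W_0 \subseteq \mathcal{T}$.

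The key step is then the following semicontinuity argument. Surjectivity of a linear map is an open condition: since $d_{x_0}\varphi$ restricted to $T_{x_0}W_0$ is surjective onto $\mathbf{C}^m$, and $T_{x_0}W_0 \subseteq \mathcal{T} = \lim T_{x_i}W$, the map $d_{x_0}\varphi$ restricted to $\mathcal{T}$ is a fortiori surjective. Now $d_{x_i}\varphi$ restricted to $T_{x_i}W$ converges (as $i\to\infty$) to $d_{x_0}\varphi$ restricted to $\mathcal{T}$ in the natural sense (the evaluation of a smooth map's differential, composed with a convergent sequence of subspaces, is continuous), so for $i$ large enough $d_{x_i}\varphi$ restricted to $T_{x_i}W$ is surjective. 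The one remaining point is that we also need $\varphi(x_i)$ to be close to $0$, indeed we want a point where $\varphi$ actually vanishes along $W$; here one invokes that $\varphi_{|W}$ has surjective differential at $x_i$, hence $\varphi_{|W}$ is a submersion near $x_i$ and its image is open, so $0$ is attained: more carefully, one applies the implicit function theorem / submersion theorem to $\varphi_{|W}$ near $x_i$ to produce a genuine point $w \in W$ with $\varphi(w) = 0$, close to $x_i$ (hence close to $x_0$), at which $\varphi_{|W}$ is effectively transversal to $0$. Alternatively one observes directly that, because $\varphi(x_0) = 0$ and $x_i \to x_0$, we have $\varphi(x_i) \to 0$, and then runs the submersion argument in a neighbourhood whose size is controlled below because the rank is locally constant equal to $m$.

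I expect the main obstacle to be making the convergence statement ``$d_{x_i}\varphi|_{T_{x_i}W} \to d_{x_0}\varphi|_{\mathcal{T}}$ implies surjectivity for large $i$'' fully rigorous, i.e., choosing the right topology on the space of pairs (point of $W$, $m$-dimensional-or-larger subspace together with a linear map on it) and checking that ``surjective'' is open there — this is elementary but needs care because the dimension of $T_{x_i}W$ may exceed that of $T_{x_0}W_0$, so one is comparing maps on subspaces of varying but bounded dimension, which is why the Grassmannian-compactness/subsequence step is essential. A clean way to package it: fix a splitting $\mathbf{C}^m$ and a codimension-$m$ complement, note that surjectivity of $d_{x_0}\varphi|_{T_{x_0}W_0}$ means a certain $m\times m$ minor (after choosing a basis of an $m$-dimensional subspace of $T_{x_0}W_0$) is nonzero; the corresponding basis vectors can be perturbed to lie in $T_{x_i}W$ (since $T_{x_0}W_0 \subseteq \mathcal{T} = \lim T_{x_i}W$), and the minor, being a polynomial in the entries of $d\varphi$ and of these vectors, stays nonzero for $i$ large by continuity. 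This reduces everything to the openness of nonvanishing of a determinant, which is immediate. The passage from ``surjective differential at $x_i$ with $\varphi(x_i)$ small'' to ``an actual zero of $\varphi_{|W}$ nearby'' is then just the constant-rank theorem applied to the submersion $\varphi_{|W}$.
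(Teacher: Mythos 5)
Your first two steps are sound and correspond exactly to the first sentence of the paper's proof: Whitney's $(a)$-property, via compactness of the Grassmannian and semicontinuity of surjectivity, gives that $\varphi|_W$ is \emph{transversal} to $0$ at every point of $W$ sufficiently close to $x_0$. That part of your argument is correct.

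The gap is in your final step, where you try to upgrade ``transversal everywhere near $x_0$ with $\varphi(x_i)\to 0$'' to ``$(\varphi|_W)^{-1}(0)$ is nonempty near $x_0$.'' You invoke the submersion theorem to say ``the image is open, so $0$ is attained,'' and then, acknowledging the worry, add that ``the size of the neighbourhood is controlled below because the rank is locally constant equal to $m$.'' This last claim is not correct: constant rank gives the \emph{existence} of a local submersion chart, but gives no lower bound on its size. As $x_i\to x_0\in\partial W$, the point $x_i$ approaches the boundary of the stratum $W$, and the chart domain can shrink to zero faster than $|\varphi(x_i)|$ does; the local image then never contains $0$. Nothing in your argument prevents this. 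Indeed, your reasoning is entirely differential-topological and never uses that the strata and $\varphi$ are \emph{complex analytic}; but the analogous real-$C^\infty$ statement is far from obvious, which is a warning sign.

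The paper closes this gap by a global dimension count that is genuinely analytic. One considers $Z=(\varphi|_{\overline W})^{-1}(0)$: it is a closed analytic subset of $\overline W$, nonempty (it contains $x_0$), and cut out by $m$ equations, hence of dimension at least $\dim W - m$ by the analytic codimension bound. If $Z\cap W$ were empty near $x_0$, then $Z\subset\partial W$ near $x_0$, so some stratum $W'\subset\partial W$ would carry a piece of $Z$ of dimension $\geq \dim W - m$. But $\varphi|_{W'}$ is transversal to $0$ near $x_0$ (again by Whitney $(a)$, applied to the pair $(W_0,W')$), so $(\varphi|_{W'})^{-1}(0)$ is smooth of dimension at most $\dim W'-m < \dim W - m$, a contradiction. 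The key input that your proof lacks is this codimension bound for zero sets of $m$ holomorphic equations, which has no pointwise/local analogue and is exactly what forces $Z$ to spill over from $\partial W$ into $W$. I would suggest replacing your final step with this argument, keeping your first steps as the justification that $\varphi|_{W'}$ is transversal on each boundary stratum.
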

\begin{proof}
The transversality of $\varphi{| W}$ at every point close to $x_0$ is an obvious consequence of the property (a) of Whitney for the ordered pair $(W_0,W)$. It  remains to prove the effective transversality  i.e., to prove  that $(\varphi{| W })^{-1}(0)$ is not empty. But $(\varphi{| \overline{W}} )^{-1}(0)$ is a close analytic subset of $\overline{W}$, non empty (because it contains the point $x_0$) and defined by $m$ equations. Therefore its codimension is at most $ m$. If ${(\varphi{| W} )}^{-1}(0)$ were empty, then $\partial W$ would contain at least one stratum $W'$ such that ${(\varphi{| W'} )}^{-1}(0)$ is non empty and of  dimension $\geqslant \dim W - m$. But on the other hand,  the transversality of $\varphi{| W'}$ implies that ${(\varphi{| W'} )}^{-1}(0)$ is a smooth variety of dimension $ < \dim W' - m $, and then   of dimension $< \dim W - m $. We then  get a  contradiction. 
\end{proof}

\begin{remark}
Of course,  in  the statement  of  Proposition   \ref{proposition finale}, we could replace the transversality relative to the value of $0$ by the transversality relative to a smooth variety of $\mathbf{C}^m$.
\end{remark}


\begin{thebibliography}{Whi65}

\bibitem[Abh64]{Abh64}
Shreeram~Shankar Abhyankar,
\newblock {\em Local analytic geometry},
\newblock Pure and Applied Mathematics, Vol. XIV. Academic Press, New York-London, 1964.

\bibitem[Bou61]{Bou61}
Nicolas Bourbaki,
\newblock {\em Alg\`ebre Commutative, Chapitre II},
\newblock Hermann, Paris 1961.
  
\bibitem[Hir64a]{Hir64-a}
Heisuke Hironaka,
\newblock Resolution of singularities of an algebraic variety over a field of
  characteristic zero {I}, {II},
\newblock {\em Ann. of Math. (2) {\bf 79} (1964), 109--203; ibid. (2)},
  79:205--326, 1964.
  
\bibitem[Hir64b]{Hir64-b}
Heisuke Hironaka,
\newblock Equivalences and deformations of isolated singularities,
\newblock {\em Woods Hole A.M.S. Summer Institute on algebraic geometry}, 1964. \\Available at https://www.jmilne.org/math/Documents/

\bibitem[Lip69]{Lip69}
Joseph Lipman,
\newblock Rational singularities, with applications to algebraic surfaces and
  unique factorization,
\newblock {\em Inst. Hautes \'{E}tudes Sci. Publ. Math.}, 36:195--279, 1969.


\bibitem[Whi65]{Whi65}
Hassler Whitney,
\newblock Tangents to an analytic variety,
\newblock {\em Ann. of Math. (2)}, 81:496--549, 1965.

\bibitem[Zar37]{Zar37} Oscar Zariski,
\newblock A theorem on the {P}oincar\'{e} group of an algebraic hypersurface,
\newblock {\em Ann. of Math. (2)}, 38(1):131--141, 1937.

\bibitem [Zar64]{Zar64}
Oscar Zariski,
\newblock Equisingularity and related questions of classification of singularities,
\newblock {\em Woods Hole A.M.S. Summer Institute on algebraic geometry}, 1964. \\Available at https://www.jmilne.org/math/Documents/

\bibitem[Zar65a]{Zar65-a} Oscar Zariski,
\newblock Studies in equisingularity, {I}, {E}quivalent singularities of plane
  algebroid curves,
\newblock {\em Amer. J. Math.}, 87:507--536, 1965.

\bibitem[Zar65b]{Zar65-b}
Oscar Zariski,
\newblock Studies in equisingularity, {II}, {E}quisingularity in
codimension $1$ (and characteristic zero),
\newblock {\em Amer. J. Math.}, 87:972--1006, 1965.

\bibitem[Zar68]{Zar68}
Oscar Zariski,
\newblock Studies in equisingularity, {III}, {S}aturation of local rings and
  equisingularity,
\newblock {\em Amer. J. Math.}, 90:961--1023, 1968.
\par\medskip
\centerline{A SAMPLE OF MORE RECENT BIBLIOGRAPHY}\par\medskip

\bibitem[Bog74]{Bog74} Erwin B\"oger,
\newblock Zur Theorie der Saturation bei analytischen Algebren,
\newblock {\em Math. Ann.}, 211:119--143, 1974.

\bibitem[Bog75]{Bog75} Erwin B\"oger,
\newblock \"Uber die Gleicheit von absoluter und relativer Lipschitz-Saturation bei analytischen Algebren,
\newblock {\em Manuscripta Math}, 16, 229--249, 1975.

\bibitem[Fer03]{Fer03} Alexandre Fernandes,
\newblock Topological equivalence of complex curves and Bi-Lipschitz Homeomorphisms,
\newblock {\em Michigan Math. J.}, 51: 593--606, 2003.

\bibitem[Gaf10]{Gaf10} Terence Gaffney,
\newblock  Bi-Lipschitz equivalence, integral closure and invariants,
\newblock in {\em Real and Complex singularities}, M. Manoel, M. C. Romero Fuster, C. T. C. Wall, editors, London Math. Soc. Lecture Notes Series, no. 380. Cambridge U. P., 125--137, 2010.

\bibitem[Kuo85]{Kuo85}  Tzee Char Kuo,
\newblock On classification of real singularities,
\newblock {\em Invent. Math.}, 82(2): 257--262, 1985.

\bibitem[Li75a]{Li75a} Joseph Lipman,
\newblock Absolute saturation of one-dimensional local rings,
\newblock{\em Amer. J. Math.} 97(3):771--790, 1975. 

\bibitem[Li75b]{Li75b} Joseph Lipman,
\newblock Relative Lipschitz saturation,
\newblock{\em Amer. J. Math.} 97:791--813, 1975. 

\bibitem[Li76]{Li76} Joseph Lipman,
\newblock Errata: Relative Lipschitz saturation,
\newblock{\em Amer. J. Math.} 98(2):571, 1976. 

\bibitem[LiT79]{LiT79} Joseph Lipman and Bernard Teissier,
\newblock Introduction to Volume IV, \textit{Equisingularity on algebraic varieties}, of Zariski's Collected Papers,
\newblock MIT Press, Boston, 1979.

\bibitem[NeP14]{NeP14} Walter D. Neumann and Anne Pichon,
\newblock Lipschitz geometry of complex curves,
\newblock {\em J. Singul.}, 3:225--234, 2014.

\bibitem[Pha71a]{Pha71a} Fr\'ed\'eric Pham,
\newblock D\'eformations \'equisinguli\`eres des id\'eaux Jacobiens de courbes planes.
\newblock {\em Proceedings of Liverpool Singularities Symposium, II (1969/70)},
\newblock Lecture Notes in Math., Vol. 209, Springer, 218--233, 1971.

\bibitem[Pha71b]{Pha71b} Fr\'ed\'eric Pham,
\newblock Fractions lipschitziennes et saturation de Zariski des alg\`ebres analytiques complexes (expos\'e d'un travail fait avec Bernard Teissier),
\newblock {\em Actes du Congr\`es International des Math\'ematiciens, Nice 1970}, Gauthier-Villars, Paris, Volume II, 649--654,  1971.

\bibitem[Spe75]{Spe75}Jean-Paul Speder,
\newblock Equisingularit\'e et conditions de Whitney, 
\newblock {\em Amer. J. Math.}, 97 (3); 571-- 588, 1975.

\bibitem[Var73]{Var73} Alexander Varchenko,
\newblock The connection between the topological and the algebraic-geometric
equisingularity in the sense of Zariski,
\newblock {\em Funkcional Anal. i Priloz.}, 7 (2) 1-- 5, 1973.

\bibitem[Zar71a]{Zar71a}
\newblock  General theory of saturation and of saturated local rings, I,
\newblock {\em Amer. J. Math.}, 93 (3); 573-- 648, 1971.

\bibitem[Zar71b]{Zar71b}
\newblock  General theory of saturation and of saturated local rings, II,
\newblock {\em Amer. J. Math.}, 93 (4); 872-- 964, 1971.

\bibitem[Zar73]{Zar73}
\newblock  Quatre expos\'es sur la saturation (Notes de J-J. Risler),
\newblock in: {\em Singularit\'es \`a Carg\`ese (Rencontre Singularit\'es en G\'eom\'etrie Analytique, Institut d'Etudes Scientifiques de Carg\`ese, 1972)}, Asterisque, n$^\circ$ 7 et 8, Soc. Math. France, Paris, 1973, 21--39.

\bibitem[Zar75]{Zar75}
\newblock  General theory of saturation and of saturated local rings, III,
\newblock {\em Amer. J. Math.}, 97 (2); 415-- 502, 1975.

\bibitem[Zar79]{Zar79}Oscar Zariski, 
\newblock  Foundations of a general theory of equisingularity on r-dimensional algebroid and algebraic varieties, of embedding dimension r+1, 
\newblock {\em Amer. J. Math.}, 101(2): 453--514, 1979.

\bibitem[Zar80]{Zar80} Oscar Zariski,
\newblock Addendum to my paper: "Foundations of a general theory of equisingularity on r-dimensional algebroid and algebraic varieties, of embedding dimension r+1'' [Amer. J. Math. 101 (1979), no. 2, 453--514],
\newblock {\em Amer. J. Math.}, 102(3): 453--514, 1980.
\end{thebibliography}
\bibliographystyle{alpha}

\printindex
\end{document}